\documentclass[12pt,a4paper]{article}
\usepackage[top=2.5cm,bottom=2.5cm,left=2.2cm,right=2.2cm]{geometry}
\def\blue{\color{blue}}

\usepackage[T1]{fontenc}
\usepackage[utf8]{inputenc}
\usepackage{color}
\usepackage{graphicx}
\usepackage{amsfonts}
\usepackage{extarrows}
\usepackage{amsmath,amsthm,amssymb,color}
\usepackage{hyperref}
\usepackage{eepic}
\usepackage{lineno}
\usepackage{enumerate}	
\usepackage{paralist}
\usepackage{algorithm}
\usepackage{algorithmicx}
\usepackage{algpseudocode}
\usepackage{authblk}
\usepackage{mathtools}
\usepackage{pifont}
\usepackage[numbers,sort&compress]{natbib}

\usepackage{tikz}
\hypersetup
{
	colorlinks=true,
	linkcolor=blue,
	filecolor=blue,
	urlcolor=blue,
	citecolor=cyan,
}
\def\blue{\color{blue}}

\usepackage[nameinlink]{cleveref}
\newtheorem{lemma}{Lemma}[section]
\crefname{lemma}{Lemma}{Lemmas}
\newtheorem{theorem}[lemma]{Theorem}
\crefname{theorem}{Theorem}{Theorems}

\newtheorem{claim}[]{Claim}
\crefname{claim}{Claim}{Claims}

\theoremstyle{definition}
\newtheorem{remark}[lemma]{Remark}

\newtheorem{notation}[lemma]{Notation}
\newtheorem{case}{\indent Case}[section]

\usepackage{authblk}

\newenvironment{wst}
{\setlength{\leftmargini}{1.5\parindent}
	\begin{itemize}
		\setlength{\itemsep}{-1.1mm}}
	{\end{itemize}}
\baselineskip 15pt

\begin{document}
	\title{The circumference of a graph with given minimum degree and clique number}
	\author{Na Chen\footnote{Email: na$\_$chen@126.com.}}	
	\author{~Yurui Tang\footnote{Email: tyr2290@163.com.}}
	\affil{\small Department of Mathematics,
		East China Normal University, Shanghai, 200241, China}
	\date{}
	\maketitle
	
	\begin{abstract}
		The circumference denoted by $c(G)$ of a graph $G$ is the length of its longest cycle. Let $\delta(G)$ and $\omega(G)$ denote the minimum degree and the clique number of a graph $G$, respectively. In [\emph{Electron. J. Combin.} 31(4)(2024) $\#$P4.65], Yuan proved that if $G$ is a 2-connected graph of order $n$, then $c(G)\geq \min\{n,\omega(G)+\delta(G)\}$ unless $G$ is one of two specific graphs.
		In this paper, we prove a stability result for the theorem of Erd\H os and Gallai, thereby helping us to characterize all $2$-connected non-hamiltonian graphs whose circumference equals the sum of their clique number and minimum degree. Combining this with Yuan's result, one can deduce that if $G$ is a $2$-connected graph of order $n$, then $c(G)\geq \min\{n,\omega(G)+\delta(G)+1\}$, unless $G$ belongs to certain specified graph classes.
		
	\end{abstract}
	
	\textbf{Mathematics Subject Classification}: 05C38, 05C69, 05C75
	
	\textbf{Keywords}: Circumference; minimum degree; clique number
	
	\section{Introduction}
	
	We consider finite simple graphs. For any undefined terminology or notation, we refer the reader to the books \cite{BM, W}. The \emph{order} of a graph is its number of vertices, and the \emph{circumference} denoted by $c(G)$ of a graph $G$ is the length of its longest cycle. The \emph{clique number} of a graph $G$, written $\omega(G)$, is the maximum cardinality of a clique in $G$. Let $\delta(G)$ denote the minimum degree of a graph $G$.
	
	A $k$-cycle is a cycle of length $k$. We denote by $K_n$ and $K_{s,t}$ the complete graph of order $n$ and the complete bipartite graph whose partite sets have cardinality $s$ and $t$, respectively. Let $\overline{G}$ denote the complement of a graph $G$.
	For graphs we will use equality up to isomorphism, so $G = H$ means that $G$ and $H$ are isomorphic.  For two graphs $G$ and $H$, $H\subseteq G$ means that $H$ is a subgraph of $G$ and $G\vee H$ denotes the join of $G$ and $H$, which is obtained from the disjoint union $G+H$ by adding edges joining every vertex of $G$ to every vertex of $H.$
	
	The study of the longest cycle in graphs has been a central topic in extremal and structural graph theory. In 1952, a fundamental result of Dirac \cite{D} states that any 2-connected graph $G$ of order $n$ satisfies  $c(G)\ge \min\{n,2\delta(G)\}$.
	Ore \cite{O} proved that for any 2-connected non-hamiltonian graph $G$ with $\delta(G)=k$, $c(G)=2k$ holds precisely when $\overline{K}_k\vee \overline{K}_s\subseteq G\subseteq K_k\vee \overline{K}_s$ for some $s\ge k+1.$
	Voss \cite{V} strengthened Dirac's theorem by raising the lower bound on $c(G)$ by two, and provided a characterization of all extremal graphs achieving this bound.
	In 2023, a new proof of Voss's theorem was presented by Zhu, Gy\H ori, He, Lv, Salia, and Xiao \cite{Z}, where they further highlighted the theorem's versatility by applying it to a range of generalized Tur$\acute{\text{a}}$n problems. Bondy \cite{B} proved that for any 2-connected graph $G$ of order $n$, if every vertex except for at most one vertex is of degree at least $k$, then $c(G)\ge \min \{2k,n\}.$
	In 2025, Ning and Yuan \cite{BY} proved a stability result of Bondy's theorem which also improved Voss's theorem.
	For other relevant work, see \cite{Li,LN,MN,S}.

	For integers $n\geq \omega+\delta>2\delta$, let $H(n,\omega,\delta)=K_\delta\vee (K_{\omega-\delta}+\overline{K}_{n-\omega}).$
	For integers $n=\omega-2+l(\delta-1)+2$ with $\omega>\delta$ and $l\geq 2$, let $Z(n,\omega,\delta)=K_2\vee (K_{\omega-2}+lK_{\delta-1})$.
	Clearly, the minimum degree, the clique number and the circumference of $H(n,\omega,\delta)$ and $Z(n,\omega,\delta)$ are $\delta$, $\omega$ and $\omega+\delta-1$, respectively.
	
	In 2024, Yuan \cite{Yuan} proved the following result on the circumference involving the minimum degree and the clique number of a graph.
	
	\begin{theorem}[Yuan \cite{Yuan}]\label{Yuan}
		Let $G$ be a $2$-connected graph of order $n$ with clique number $\omega$ and minimum degree $\delta$. Then $c(G)\ge \min\{n,\omega+\delta\}$ unless $G=H(n,\omega,\delta)$ or $Z(n,\omega,\delta)$.
	\end{theorem}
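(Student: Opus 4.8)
The plan is to prove the characterization form of the statement: assuming $G$ is $2$-connected, non-hamiltonian (so $c(G)<n$) and $c:=c(G)\le \omega+\delta-1$, I will show $G=H(n,\omega,\delta)$ or $G=Z(n,\omega,\delta)$. Throughout, fix a longest cycle $C$ (with $|C|=c$) and a maximum clique $K$ with $|K|=\omega$. Since $c<n$, the set $R=V(G)\setminus V(C)$ is nonempty, and the whole argument revolves around understanding how the components of $G-C$, and how the clique $K$, sit relative to $C$.

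First I would run the classical longest-cycle machinery on each component $D$ of $G-C$. Orienting $C$ and letting $X=N_C(D)$ be its set of attachment vertices, the maximality of $C$ yields the standard insertion/rotation facts: consecutive attachment vertices are at cyclic distance $\ge 2$, the successor set $X^{+}$ is independent, and no vertex of $X^{+}$ sends an edge into $D$. Feeding in $\delta(G)=\delta$ (applied both to a vertex of $D$ and to a longest path inside $D$) should force each such $D$ to induce a clique that is completely joined to $X$, and should force $X$ itself to be small and complete. The target of this stage is the key structural claim: $G$ has a clique cutset $S$ such that $S$ together with the largest component $B$ of $G-S$ forms the maximum clique (so $S\cup V(B)=K$ and $|B|=\omega-|S|$), every other component of $G-S$ induces a clique fully joined to $S$, and the value $\delta$ is realized on a smallest such component.

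Granting that structure, pinning down the two families is a short optimization. Writing $s=|S|$ and using $\deg(v)\ge\delta$ for $v$ in a small component $D_i$ fully joined to $S$, one gets $|D_i|\ge \delta-s+1$, and in the tight case the circumference equals $s+|B|+(\text{sizes of the }s-1\text{ largest small components})=\omega+(s-1)(\delta-s+1)$. The hypothesis $c\le\omega+\delta-1$ then reads $(s-1)(\delta-s+1)\le\delta-1$, whose only integer solutions with $2\le s\le\delta$ are $s=2$ and $s=\delta$. The endpoint $s=\delta$ forces every small component to be a single vertex, giving $H(n,\omega,\delta)$; the endpoint $s=2$ forces every small component to be a $K_{\delta-1}$, giving $Z(n,\omega,\delta)$ with at least two such components (one fewer would make $G$ hamiltonian). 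A final bookkeeping step checks that no small component can exceed the extremal size and that there are enough of them, so that $c$ equals $\omega+\delta-1<n$ rather than reaching $\omega+\delta$.

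The hard part will be the middle stage: upgrading the soft longest-cycle inequalities into the rigid clique-cutset structure. Two points look delicate. First, showing that every component of $G-C$ and every maximal arc of $C$ between consecutive attachment vertices is complete — any missing chord tends to permit a rerouting that either lengthens $C$ or contradicts maximality of $\omega$. Second, merging the attachment set $X$ with the appropriate part of $K$ into a single clique cutset $S$; this is precisely where near-extremal configurations proliferate, and a careful multi-case analysis on $s=|X|$ and on the sizes of the bridges seems unavoidable. I expect that controlling these near-extremal cases, rather than the clean endpoint optimization, is what carries the real weight of the proof.
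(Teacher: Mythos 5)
First, a point of reference: this paper does not prove the statement at all --- it is Yuan's theorem, quoted from \cite{Yuan} and used as a black box; the closest analogues in the paper itself are \Cref{l2.3} and the proof of \Cref{thm1.3}, which run P\'osa-type rotation arguments on longest $(H,T)$-paths between a maximum clique $H$ and its complement, together with the rooted Erd\H{o}s--Gallai lemma (\Cref{L2.2}). So your proposal can only be judged on its own merits, and there it has a genuine gap. Your final optimization stage is correct and clean: granting a clique cutset $S$ of size $s$ with $S\cup V(B)$ the maximum clique and every other component a clique of size at least $\delta-s+1$ fully joined to $S$, non-hamiltonicity forces at least $s+1$ components, the concavity of $(s-1)(\delta-s+1)$ on $2\le s\le\delta$ together with $c\le\omega+\delta-1$ forces $s\in\{2,\delta\}$ with equality throughout, and the two endpoints are exactly $Z(n,\omega,\delta)$ and $H(n,\omega,\delta)$. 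That part I would accept as written.

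The gap is that the ``middle stage'' --- which you yourself flag as the hard part --- is not a proof but a hope, and it is precisely where the entire content of the theorem lives. The standard facts you invoke (consecutive attachment vertices nonadjacent on $C$, independence of $X^{+}$, no edges from $X^{+}$ into $D$) hold for any longest cycle in any $2$-connected graph; they do not by themselves force any component of $G-C$ to be a clique, nor force it to be fully joined to $X$, nor force $X$ to be complete, nor force all components to share the same attachment set so that a single cutset $S$ emerges. All of these are statements that are true only because the theorem is true, and deriving them is equivalent to proving it. Moreover, your sketch never explains how the maximum clique $K$ interacts with $C$: you need $K\subseteq V(C)$ and, much more delicately, that $K$ is exactly $S\cup V(B)$, yet the hypothesis $c\le\omega+\delta-1$ enters your argument nowhere before the final inequality --- so nothing in the middle stage can ``see'' $\omega$ at all. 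Contrast this with the machinery the paper (and Yuan) actually uses: the clique is built into the argument from the start by taking longest paths \emph{from the clique into the rest of the graph}, and the degree/crossing-pair bookkeeping of P\'osa's lemma is what converts $c=\omega+\delta-1$ into rigid adjacency structure, with the rooted Erd\H{o}s--Gallai lemma handling the components hanging off a two-vertex attachment. Without an argument of comparable force replacing ``should force'' and ``I expect'', the proposal establishes only that the extremal graphs, once known, satisfy the claimed optimization --- not the theorem.
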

	
	Moreover, Yuan \cite{Y} used {\blue Theorem \ref{Yuan}} to prove a longstanding conjecture of Erd\H os, Simonovits and S$\acute{\text{o}}$s \cite{E}(determining the maximum number of edge colors in a complete graph such that there is no rainbow path of given length).
	In 2024, Ma and Yuan \cite{M} improved the result of  F$\ddot{\text{u}}$redi, Kostochka, Luo and Verstra$\ddot{\text{e}}$te in \cite{F,FL} by combining {\blue Theorem \ref{Yuan}} with a stability result of the well-known P$\mathrm{\acute{o}}$sa lemma.
	
	This paper aims to establish a stability version of {\blue Theorem \ref{Yuan}} by characterizing all 2-connected non-hamiltonian graphs whose circumference equals the sum of their clique number and minimum degree. Before presenting our main result, we first introduce the following graph classes.
	
	\begin{notation}
		For integers $n\geq \omega+\delta+1\ge2\delta+1$, let $H_1(n,\omega+1,\delta)=K_\delta\vee (K_{\omega+1-\delta}+\overline{K}_{n-\omega-1})$. For integers $n\geq \omega+\delta+1\ge2\delta+3$, let $H_1(n,\omega,\delta+1)=K_{\delta+1}\vee (K_{\omega-1-\delta}+\overline{K}_{n-\omega})$.
		For integers $n=\omega-1+l(\delta-1)+2$ with $\omega\ge\delta$ and $l\geq 2$, let $H_2(n,\omega+1,\delta)=K_2\vee (K_{\omega-1}+lK_{\delta-1})$ (see Figure \ref{fig1}).
	\end{notation}
	
	Clearly, the minimum degree, the clique number and the circumference of $H_1(n,\omega+1,\delta)$ and $H_2(n,\omega+1,\delta)$ are $\delta$, $\omega+1$ and $\omega+\delta$, respectively. And the minimum degree, the clique number and the circumference of $H_1(n,\omega,\delta+1)$ are $\delta+1$, $\omega$ and $\omega+\delta$, respectively.
	
	\begin{figure}[h]
		\centering
		\includegraphics[width=0.7\textwidth]{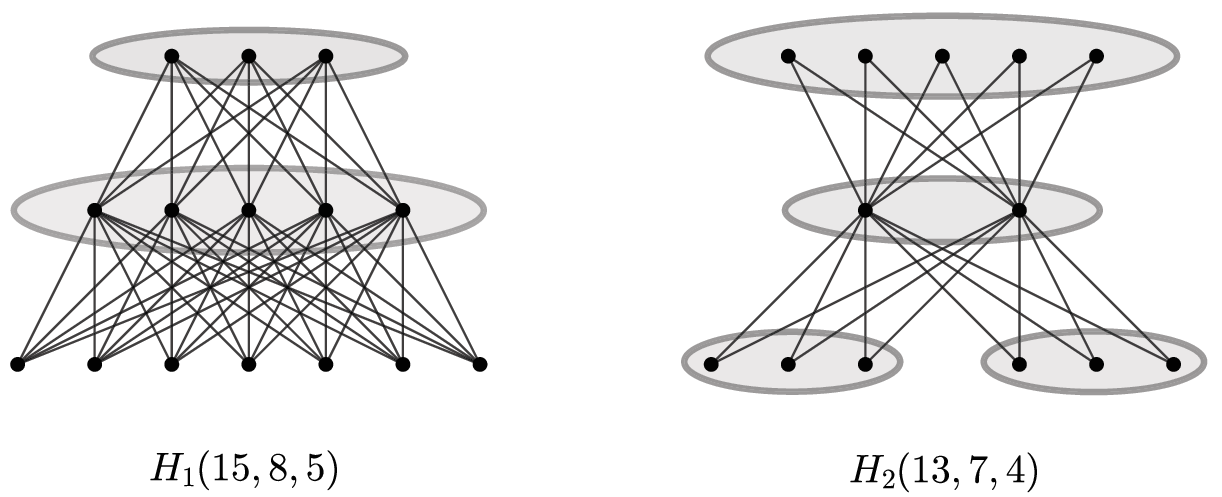}
		\caption{\footnotesize{The vertices in each gray ellipse induce a complete graph.}}
		\label{fig1}
	\end{figure}

	\begin{notation}
		For integers $n=\omega+b_1+b_2$ with $\omega\ge 4$ and $\min\{b_1,b_2\}\ge 2,$ let $H_3(n,\omega,2)$ be the graph of order $n$ with a vertex partition $A\cup B_1\cup B_2$ with sizes $\omega$, $b_1$ and $b_2$ respectively, whose edge set consists of all edges in $A$, all edges between $\{v_1,v_2\}\subseteq A$ and $B_1$ and all edges between $\{v_3,v_4\}\subseteq A$ and $B_2,$ where $\{v_1,v_2\}\cap \{v_3,v_4\}=\emptyset$ (see Figure \ref{fig2}).
	\end{notation}
	Note that $\delta(H_3(n,\omega,2))=2$, $\omega(H_3(n,\omega,2))=\omega$ and $c(H_3(n,\omega,2))=\omega+2$.
	
	\begin{notation}
		For integers $n=\omega+l_1+2l_2$ with $\omega> 3$ and $l_1+l_2\ge 3$,
		let $R_1=C_1\vee(K_{\omega-3}+\overline{K}_{l_1})$ with $C_1=K_3$. Let $H_4(n,\omega,3)$ be the graph obtained from $R_1$ by adding $l_2K_2$, each joining the same two common vertices of $C_1$  (see Figure \ref{fig2}).
	\end{notation}
	Note that $\delta(H_4(n,\omega,3))=3$, $\omega(H_4(n,\omega,3))=\omega$ and $c(H_4(n,\omega,3))=\omega+3$.
	
	\begin{figure}[h]
		\centering
		\includegraphics[width=0.7\textwidth]{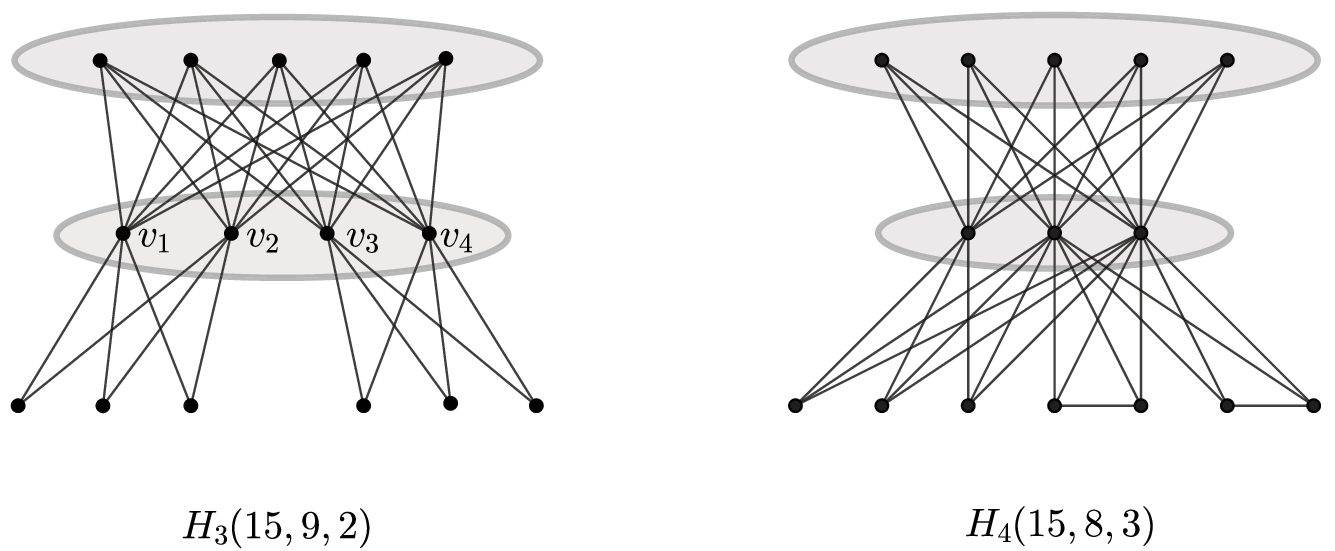}
		\caption{\footnotesize{The vertices in each gray ellipse induce a complete graph.}}
		\label{fig2}
	\end{figure}
	
	\begin{notation}
		For integers $n\ge \omega+\delta+1>2\delta+1$, let $G_1=K_\delta\vee (K_{\omega-\delta}+K_2+\overline{K}_{n-\omega-2})$(see Figure \ref{fig3}).
	\end{notation}
	Note that $\delta(G_1)=\delta$, $\omega(G_1)=\omega$ and $c(G_1)=\omega+\delta$.
	
	For positive integers $n_i,h_i$, $\delta$ and $l$ with  $n_i=h_i(\delta-1)+1$ and $h_i\ge 2$, let $L_i(\delta)=K_1\vee h_iK_{\delta-1}$.
	Let $L(\delta)=L_1(\delta)+L_2(\delta)+\dots+L_l(\delta)$, and let $S$ is the union of stars of order at least four.
	
	\begin{notation}
		For integers $n=\omega+(l_1+l_3)(\delta-1)+l_2\delta+|S|+|L(\delta)|$ with $l_1+l_2\ge 1$, $|S|\ge 0$, $|L(\delta) |\ge 0$, $l_3\ge 2$ and $\omega > \delta,$ let $T_1=A_1\vee(K_{\omega-2}+l_1K_{\delta-1}+l_2K_\delta+S)$ where $A_1=a_1a_2$ is an edge and let $T_2=A_2\vee l_3 K_{\delta-1}$ with $A_2=K_2$.  
		Let $T_3$ be the graph of order $n$ from $T_1$ and $L(\delta)$ by taking the join of $a_1$ with $L(\delta)$ and by adding edges from $a_2$ to the cut-vertex of each component of $L(\delta)$.
		Let $G_2=T_3$, where $T_3$ satisfies  two conditions: $T_3$ is non-hamiltonian and $c(T_3)=\omega+\delta$ (see Figure \ref{fig3}).
		Let $G_3$ be the graph obtained from $T_2$ and $T_3$ by identifying a vertex from $A_1$ and a vertex from $A_2$ as a new vertex $v$ and adding an edge between $A_1-v$ and $A_2-v$ (see Figure \ref{fig3}).
	\end{notation}
	
	For $i\in \{2,3\}$,  $\omega(G_i)=\omega$, $c(G_i)=\omega+\delta$.
	If $S\ne \emptyset$, then $\delta(G_i)=3$; 
	if $S=\emptyset$, then $\delta(G_i)=\delta$.  
	
	\begin{notation}
		For integers $n\geq \omega+3$ and $\omega\ge 4$, let $F_1$ be the graph with its vertex set partitioned into sets $A$, $B$ and $C$ of sizes $\omega-2$, $2$ and $3$, respectively, such that $F_1[A\cup B]=K_{\omega}$, $F_1[C]=\overline{K}_{3}$ and $[B,C]$ is complete. Let $a_1,a_2\in A$, let $B=\{b_1,b_2\}$ and let
		$C=\{c_1,c_2,c_3\}$. Let $F_2$ be the graph obtained from $F_1$ by adding two edges $a_1c_1$, $a_2c_2$ and deleting two edges $b_2c_1,b_2c_2$. 
		Let $G_4$ be the graph of order $n$ obtained by adding $n-\omega-3$ new vertices to $F_2$, where the neighborhood of each new vertex is exactly equal to that of one of $c_1$, $c_2$, or $c_3$ (see Figure \ref{fig3}).
	\end{notation}
	
	Note that $\delta(G_4)=2$, $\omega(G_4)=\omega$ and $c(G_4)=\omega+2$.
	
	\begin{figure}[h]
		\centering
		\includegraphics[width=0.65\textwidth]{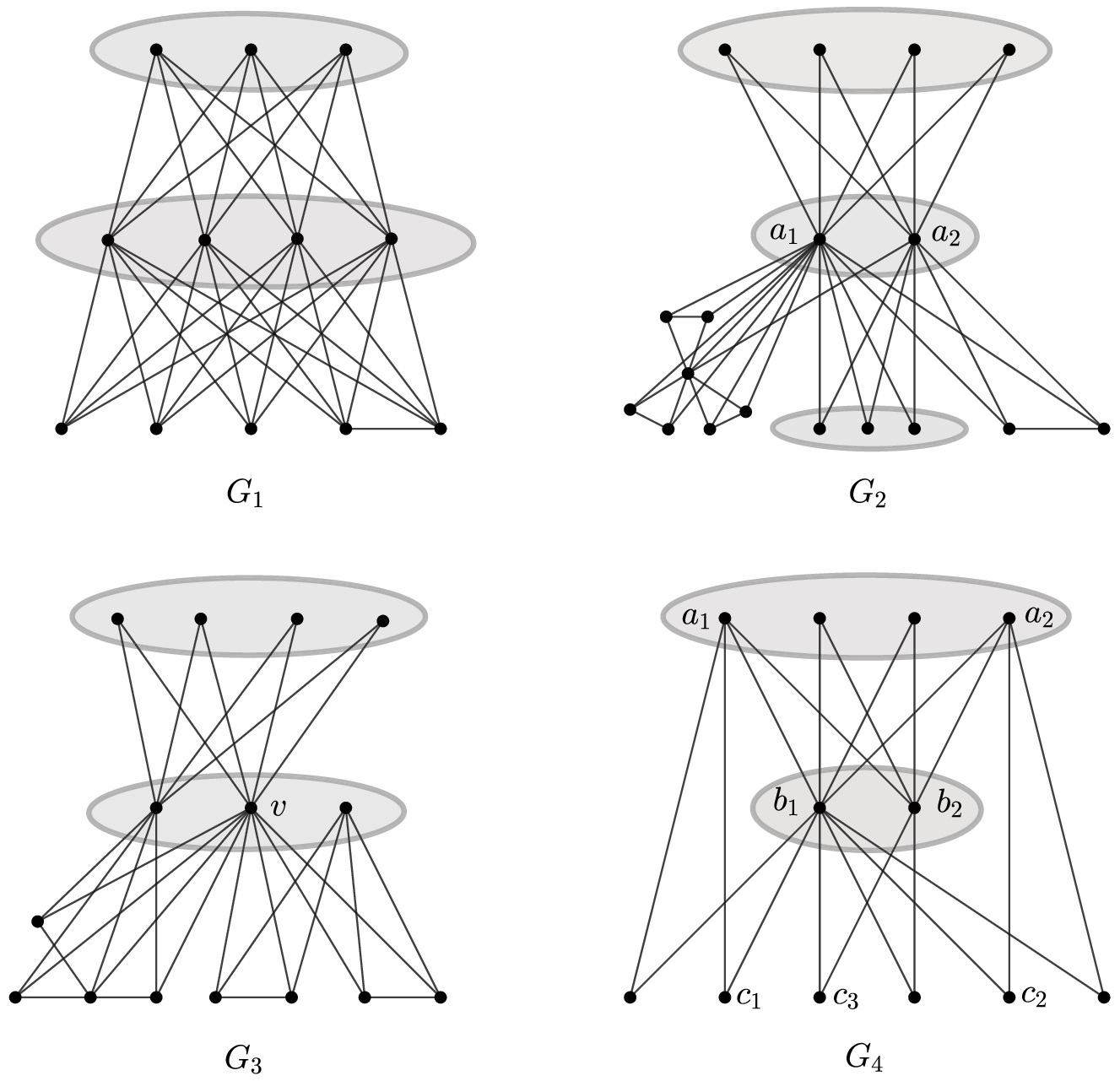}
		\caption{\footnotesize{The vertices in each gray ellipse induce a complete graph.}}
		\label{fig3}
	\end{figure}
	
	Let $\mathcal F=\{H_1(n,\omega+1,\delta),H_1(n,\omega,\delta+1),H_2(n,\omega+1,\delta), G_1,G_2,G_3,G_4\}.$
	Let $\mathcal G$ be the family of subgraphs of graphs in $\mathcal F$ that have order $n$, minimum degree $\delta$, and clique number $\omega$,  except for $H(n,\omega,\delta)$ and $Z(n,\omega,\delta)$.
	Graphs satisfying the above conditions can be easily found among the subgraphs of graphs in $\mathcal F$.
	Now, we establish the following main result.
	
	\begin{theorem}\label{thm1.3}
		Let $G$ be a $2$-connected graph of order $n$ with clique number $\omega$ and minimum degree $\delta$. Then $c(G)\ge \min\{n,\omega+\delta+1\}$ unless
		\begin{wst}
			\item[{\rm (i)}]
			$c(G)=\omega+\delta-1$ and $G\in \{H(n,\omega,\delta), Z(n,\omega,\delta)\};$ or
			
			\item[{\rm (ii)}]
			$c(G)=\omega+\delta$ and $G\in \{H_3(n,\omega,2),H_4(n,\omega,3)\}\cup \mathcal{G}.$
		\end{wst}
	\end{theorem}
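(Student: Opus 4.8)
The plan is to build on Theorem~\ref{Yuan} and reduce the whole statement to a structural characterization. By Theorem~\ref{Yuan}, either $G\in\{H(n,\omega,\delta),Z(n,\omega,\delta)\}$, in which case $c(G)=\omega+\delta-1$ and conclusion~(i) holds, or $c(G)\ge\min\{n,\omega+\delta\}$. In the second case, if $n\le\omega+\delta$ then $c(G)\ge n$ forces $c(G)=n\ge\min\{n,\omega+\delta+1\}$ and there is nothing to prove; hence I may assume $n\ge\omega+\delta+1$, so that $c(G)\ge\omega+\delta$ and the desired bound can fail only when $c(G)=\omega+\delta$, i.e.\ when $G$ is non-hamiltonian. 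Everything therefore reduces to one assertion: \emph{every $2$-connected graph $G$ of order $n\ge\omega+\delta+1$ with clique number $\omega$, minimum degree $\delta$ and $c(G)=\omega+\delta$ lies in $\{H_3(n,\omega,2),H_4(n,\omega,3)\}\cup\mathcal{G}$.}

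Second, I would run the standard longest-cycle machinery on a fixed longest cycle $C$ with $|C|=\omega+\delta$. Writing $R=G-V(C)\ne\emptyset$ and orienting $C$, I would invoke the classical facts for longest cycles: every component $D$ of $R$ has $|N_C(D)|\ge 2$ by $2$-connectivity, the successors $\{u^{+}:u\in N_C(D)\}$ form an independent set that sends no edge to $D$, and each arc of $C$ between consecutive attachment points is long enough to absorb any $D$-path (otherwise $C$ could be lengthened). Feeding in $\delta(G)=\delta$ then shows that an off-cycle vertex cannot spread its $\ge\delta$ neighbours along the short cycle $C$ without producing a cycle longer than $\omega+\delta$; consequently each component of $R$ is small and its attachment to $C$ is tightly restricted, of exactly the types occurring in the definitions of $G_1$--$G_3$ and $H_2$ (pendant cliques $K_{\delta-1}$ or $K_\delta$, stars, and the friendship-type pieces $L_i(\delta)$), while $C$ itself carries a clique on about $\omega$ vertices.

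Third --- and this is the new ingredient advertised in the abstract --- I would prove and apply a stability form of the Erd\H{o}s--Gallai theorem for circumference. The extremal $2$-connected graphs of circumference at most $\omega+\delta$ are, for the present parameters, exactly $H_1(n,\omega+1,\delta)=K_\delta\vee(K_{\omega+1-\delta}+\overline{K}_{n-\omega-1})$, $H_1(n,\omega,\delta+1)=K_{\delta+1}\vee(K_{\omega-1-\delta}+\overline{K}_{n-\omega})$ and the glued-clique graph $H_2(n,\omega+1,\delta)$. The stability statement I want says that a $2$-connected graph realizing circumference \emph{exactly} $\omega+\delta$ while keeping minimum degree $\delta$ must embed as a subgraph of one of these extremal graphs (or of a low-degree degeneration). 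Combining this with the structural output of the previous step pins $G$ down to a subgraph of a member of $\mathcal{F}$; imposing clique number exactly $\omega$, minimum degree exactly $\delta$, and excluding the two circumference-$(\omega+\delta-1)$ graphs then produces precisely $\mathcal{G}$, while the configurations that escape the generic pattern are collected as $H_3(n,\omega,2)$, $H_4(n,\omega,3)$ and $G_4$.

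I expect the main obstacle to be twofold. The first difficulty is the stability argument itself: upgrading the Erd\H{o}s--Gallai bound from ``maximum number of edges'' to ``circumference one above the extremal value forces containment in an extremal configuration'' requires careful control of near-extremal graphs, where different blocks and attachment patterns compete and the two extremal families (the $H_1$-type and the $H_2$-type) must be separated cleanly. The second, and in practice more laborious, difficulty is the exhaustive case analysis for small minimum degree: when $\delta\in\{2,3\}$ the generic pendant-clique structure degenerates (a $K_{\delta-1}$ collapses to a vertex or an edge), which opens up the extra exceptional graphs $H_3(n,\omega,2)$, $H_4(n,\omega,3)$ and $G_4$ and forces a delicate verification that the subgraph family $\mathcal{G}$ is captured exactly, neither over- nor under-counted, once the clique-number and minimum-degree constraints are imposed. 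Ensuring completeness of this enumeration, and that it dovetails with the stability lemma without leaving gaps, is where the real work lies.
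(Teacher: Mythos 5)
Your opening reduction via \Cref{Yuan} is correct and is exactly how the paper's proof begins: everything comes down to showing that a $2$-connected non-hamiltonian graph with $\delta(G)=\delta$, $\omega(G)=\omega$ and $c(G)=\omega+\delta$ lies in $\{H_3(n,\omega,2),H_4(n,\omega,3)\}\cup\mathcal G$. The trouble is everything after that. The centerpiece of your plan, the step-3 ``stability form of the Erd\H os--Gallai theorem,'' is false as stated: you claim such a graph must embed into one of $H_1(n,\omega+1,\delta)$, $H_1(n,\omega,\delta+1)$, $H_2(n,\omega+1,\delta)$, with exceptions dismissed as ``low-degree degenerations.'' But the exceptional families are neither low-degree nor absorbable. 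The graphs $G_1=K_\delta\vee(K_{\omega-\delta}+K_2+\overline{K}_{n-\omega-2})$ and $G_2,G_3$ with $S=\emptyset$ exist for \emph{every} $\delta$, have minimum degree $\delta$, clique number $\omega$ and circumference $\omega+\delta$, yet (for $\omega\ge\delta+2$) $G_1$ is a subgraph of none of your three targets: its two cliques $K_\delta\vee K_{\omega-\delta}$ and $K_\delta\vee K_2$ overlap in exactly $\delta$ vertices, and a check of the maximal cliques of the three target graphs (one clique of size $\omega+1$ or $\omega$, all others of size $\delta+1$ or $\delta+2$) shows there is no room for both images. This is precisely why the paper's family $\mathcal F$ has seven members rather than three, and why \Cref{thm1.3} lists $H_3(n,\omega,2)$ and $H_4(n,\omega,3)$ \emph{separately} from $\mathcal G$: they are not subgraphs of members of $\mathcal F$ at all. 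Since your candidate lemma misses these families, the set you would end up calling ``$\mathcal G$'' is strictly smaller than the paper's, and the resulting characterization would be wrong, not merely incomplete. Your step 2 has the same defect in softer form: ``each component of $R$ is small and its attachment to $C$ is tightly restricted, of exactly the types occurring in $G_1$--$G_3$ and $H_2$'' asserts the answer rather than deriving it, and ``small'' is literally false, since the stars in $S$ occurring in $G_2$ and $G_3$ may be arbitrarily large.

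For contrast, the stability result the paper actually proves (\Cref{L2.2}) is of a different kind: it is a stability version of the Erd\H os--Gallai $(x,y)$-path theorem (\Cref{EG}) for $2$-connected \emph{rooted} graphs, characterizing those of minimum degree $\ge\delta$ off the roots that have no $(x,y)$-path of order $\delta+3$, with exceptional structures $L(\delta)+T(\delta)$, and $S+L(3)+T(3)$ when $\delta=3$. The global argument is also path-based rather than cycle-based: one passes to an edge-maximal counterexample, fixes a maximum clique $H$, takes a longest $(H,T)$-path with non-adjacent ends, classifies by the endpoint degrees and by P\'osa crossing pairs (\Cref{L1}, \Cref{l2.3}), and applies \Cref{L2.2} to the rooted graphs hanging off the path; it is this analysis that generates $G_1$--$G_4$, $H_3$, $H_4$ and certifies that the list is complete. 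Your longest-cycle skeleton might conceivably be executed, but as written the proposal's two key steps are, respectively, unproven and false, so there is a genuine gap.
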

	The remainder of this paper is organized as follows.
	In Section \ref{sec2}, we give some necessary notations and definitions.
	In Section \ref{sec3}, we present some lemmas for the proof.
	Section \ref{sec4} is devoted to the proof of {\blue Theorem \ref{thm1.3}}.

	\section{Notations and definitions}\label{sec2}
	
	We denote  by $V(G)$ the vertex set of a graph $G$, by $E(G)$ the edge set of $G$, and denote by $|G|$ the order of $G$.
	Let $N_G(v)$ be the neighborhood in $G$ of a vertex $v$, and let $d_G(x)$ be the size of $N_G(x)$,  and define $N_G[x]=N_G(x)\cup \{x\}$.
	For $S\subseteq V(G)$, let $N_G(S)=(\mathop{\cup}\limits_{x\in S}N_G(x))\setminus S$ and let $N_G[S]=N_G(S)\cup S$. Denote by $G[S]$ the subgraph induced by $S$.
	Let $A$ and $B$ be any two subsets of $V(G)$. We write $[A,B]$  to denote the set of edges that have one end in $A$ and the other end in $B$. If $G[[A,B]]$  is a complete bipartite graph, then we say $[A,B]$  is complete.
	For any pair binary tuple $(x,y)$ and $(a,b)$, we denote $x=a$ and $y=b$ by $(x,y)=(a,b)$. 
	
	For two distinct vertices $x$ and $y$, an \emph{$(x,y)$-path} is a path whose endpoints are $x$ and $y$.
	We may extend the notion of an $(x,y)$-path to paths connecting subsets $X$ and $Y$ of $V(G)$.
	An \emph{$(X,Y)$-path} is a path which starts at a vertex of $X$, ends at a vertex of $Y$, and whose internal vertices belong to neither $X$ nor $Y$; if $F_1$ and $F_2$ are subgraphs of a graph $G$, we write \emph{$(F_1,F_2)$-path} instead of $(V(F_1),V(F_2))$-path.
	
	Recall that every connected graph $G$ has a block-cutvertex tree (\cite{BM}, \cite{W}), a leaf of which is called an \emph{end-block}. If $G$ has cut-vertices, then an end-block of $G$ contains exactly one cut-vertex. If $B$ is an end-block and a vertex $b$ is the only cut-vertex of $G$ with $b \in V(B)$, then we say that $B$ is an\emph{ end-block with cut-vertex} $b$.
	
	Let $P=v_1v_2\cdots v_m$ be a path in a graph $G$.
	For $v_i,v_j\in V(P)$, we use $v_iPv_j$ to denote the subpath of $P$ between $v_i$ and $v_j$.
	For a vertex $v\in V(P)$, denote $v^-$ and $v^+$ to be the immediate predecessor and successor of $v$ on $P$, respectively.
	Let $v^{-1}=v^-$ and $v^{+1}=v^+$.
	Let $v^{-i}=(v^{-(i-1)})^-$ and $v^{+i}=(v^{+(i-1)})^+$ for $i\geq 2$.
	For $S\subseteq V(P)$, let $S^+=\{v^+:v\in S\setminus\{v_m\}\}$ and $S^-=\{v^-:v\in S\setminus\{v_1\}\}$.
	We call $(i,j)$ a \emph{crossing pair} of $P$ if $v_i\in N_P(v_m)$ and $v_j\in N_P(v_1)$ with $i<j$.
	A crossing pair $(i,j)$ is \emph{minimal} in $P$ if $v_h\notin N_P(v_1)\cup N_P(v_m)$ for each $i<h<j$. For any minimal crossing pair $(i,j)$, we call $(i,j)$ is a \emph{minimum} crossing pair if $j-i$ is minimum.
	We say a vertex $x\notin Y$ is  \emph{connected to} a vertex set $X\subseteq Y$ if there is a path starting from $x,$ ending at $x'\in X$ and without containing any vertices of $Y\setminus \{x'\}$.

	\section{Preliminaries and Lemmas}\label{sec3}
	
	We need the following well-known lemma proved by P$\acute{\text{o}}$sa \cite{Po}.
	
	\begin{lemma}[P$\mathrm{\acute{o}}$sa \cite{Po}]\label{L1}
		Let $G$ be a $2$-connected graph of order $n$ with a path $P=x_1x_2\dots x_k$. Then $c(G)\ge \min\{n,d_P(x_1)+d_P(x_k)\}.$
		Furthermore,  if $P$ does not contain a crossing pair and $N_P(x_1)\cap N_P(x_k)\neq\emptyset$, then $c(G)\ge \min\{n,d_P(x_1)+d_P(x_k)+1\}$.
		If $P$ does not contain a crossing pair and $N_P(x_1)\cap N_P(x_k)=\emptyset$, then $c(G)\ge \min\{n,d_P(x_1)+d_P(x_k)+2\}$.
	\end{lemma}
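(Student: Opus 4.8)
The plan is to run the classical rotation-and-fold argument on $P=x_1\cdots x_k$, organised around the crossing-pair dichotomy of Section~\ref{sec2}. Set $s=d_P(x_1)$ and $t=d_P(x_k)$, and let $I,J\subseteq\{1,\dots,k\}$ be the index sets of $N_P(x_1)$ and $N_P(x_k)$, so $|I|=s$ and $|J|=t$. The elementary engine is this: whenever $x_i\in N_P(x_k)$ and $x_{i+1}\in N_P(x_1)$, folding $P$ at this consecutive crossing gives the cycle $x_1x_2\cdots x_i\,x_k x_{k-1}\cdots x_{i+1}\,x_1$ through all of $V(P)$; more generally a crossing pair $(i,j)$ yields the folded cycle $x_1\cdots x_i\,x_k\cdots x_j\,x_1$. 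The whole proof turns on whether $P$ admits a crossing pair, which happens precisely when $\min J<\max I$.

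If $P$ has a crossing pair, I would pick a minimum one $(i,j)$; by the definition in Section~\ref{sec2} no index strictly between $i$ and $j$ lies in $I\cup J$, so all neighbour-incidences of $x_1$ and $x_k$ are captured on the folded cycle $C=x_1\cdots x_i\,x_k\cdots x_j\,x_1$. A standard accounting of these incidences (after allowing for coincidences among the neighbours), together with a $2$-connectivity step to absorb a vertex off $P$ when $|C|<n$, is meant to deliver $c(G)\ge\min\{n,s+t\}$. Since both refinements presume $P$ has \emph{no} crossing pair, nothing sharper is needed here, so this is the comparatively routine case.

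The substance is the no-crossing case. Here no neighbour of $x_k$ precedes a neighbour of $x_1$, so $\max I\le\min J$; write $p=\max I$ and $q=\min J$, with $p\le q$. The initial segment closes through the chord $x_1x_p$ into a cycle $C_1=x_1x_2\cdots x_p x_1$, and since the $s$ neighbours of $x_1$ occupy $s$ of the $p-1$ interior vertices $x_2,\dots,x_p$ we get $|C_1|=p\ge s+1$; symmetrically $C_2=x_q\cdots x_k x_q$ satisfies $|C_2|=k-q+1\ge t+1$. If $p=q$ then $x_p=x_q$ is the unique common neighbour of $x_1,x_k$, matching the hypothesis of the first refinement; if $p<q$ there is no common neighbour, matching the second. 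In each subcase the strategy is to splice $C_1$ and $C_2$ into a single cycle through all their vertices: the spine $x_p x_{p+1}\cdots x_q$ of $P$ supplies one connection between them, and $2$-connectivity supplies a second connection disjoint from it. When $p=q$ the two end-cycles already share $x_p$ and the splice produces a cycle on all of $V(P)$, of length $k\ge s+t+1$; when $p<q$ they are disjoint and the splice produces a cycle of length $\ge|C_1|+|C_2|\ge s+t+2$. Either way this beats $s+t$, so the base bound holds a fortiori.

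The main obstacle is exactly this splice in the no-crossing case. Because every neighbour of $x_1$ lies weakly to the left of every neighbour of $x_k$, any cycle meeting both endpoints must traverse the ``middle'' of $P$ twice, yet the spine supplies only one such traversal; the second must be extracted from $2$-connectivity, and --- crucially --- it has to attach to $C_1$ and to $C_2$ at vertices adjacent (on those cycles) to the spine's attachment points, for otherwise the merged cycle captures only an arc of each end-cycle rather than all of its vertices. Verifying that such a well-placed bridge exists (via two internally disjoint paths out of the configuration, in the spirit of Menger's theorem) and that the vertex count is exactly right when some neighbours coincide or when $V(P)\ne V(G)$ is the delicate part; pinning the $+1$ to $p=q$ and the $+2$ to $p<q$, i.e.\ to $N_P(x_1)\cap N_P(x_k)$ being nonempty or empty, is what makes the two refinements fall out.
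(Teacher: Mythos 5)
The paper offers no proof to compare against: Lemma~\ref{L1} is quoted from P\'osa's paper \cite{Po}, so your attempt has to be judged on its own. Your setup is fine (a crossing pair exists iff $\min J<\max I$; in the no-crossing case $p=\max I\le q=\min J$, with $p=q$ exactly when $N_P(x_1)\cap N_P(x_k)\neq\emptyset$), but both halves of your argument have genuine gaps. In the crossing case, which you dismiss as ``routine,'' the folding argument gives either a cycle spanning $V(P)$ (when some $x_h\in N_P(x_k)$ has $x_{h+1}\in N_P(x_1)$), of length exactly $k$, or, via the disjointness of $N_P^-(x_1)$ and $N_P[x_k]$, a cycle of length at least $s+t$. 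That proves $c(G)\ge\min\{k,s+t\}$ with $k=|V(P)|$, and no ``$2$-connectivity step to absorb a vertex off $P$'' can upgrade $k$ to $n$: the statement with $n$ is simply false. Take $K_4$ on $\{a,b,c,d\}$ plus two vertices $u,v$ each joined exactly to $a$ and $b$, and $P=cabd$; then $n=6$, $d_P(c)+d_P(d)=6$, $P$ has a crossing pair, yet $c(G)=5$. (The classical P\'osa--Kopylov lemma reads $\min\{|V(P)|,d_P(x_1)+d_P(x_k)\}$, which is all this paper ever uses, since it only applies the lemma to paths on at least $\omega+\delta+1$ vertices; the refined parts are unaffected because $s+t+1\le k$ when $p=q$ and $s+t+2\le k$ when $p<q$.) Your sketch hides this irreparable step in the phrase ``is meant to deliver.''

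In the no-crossing case the splice is exactly where the lemma's difficulty lives, and you have restated the difficulty rather than resolved it, in two respects. First, the target ``a cycle through all of $V(C_1)\cup V(C_2)$'' is too strong: $G[\{x_1,\dots,x_p\}]$ need not have a spanning path between two prescribed attachment vertices (with $p=4$ and $x_1$ adjacent to $x_2,x_3,x_4$ and no other chords, there is no spanning $(x_1,x_3)$-path), so if the two connections attach at $x_1$ and $x_3$ your merged cycle loses a vertex and the count drops to $s+t+1$. The correct target is a cycle through $N_P[x_1]\cup N_P[x_k]$ only, obtained from paths of the form $x_hx_{h-1}\cdots x_1x_{h'}x_{h'+1}\cdots x_p$ (where $x_{h'}$ is the first neighbour of $x_1$ after $x_h$), and these paths are forced to end at $x_p$ (resp.\ $x_q$), so at least one connection must attach precisely there. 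Second, your claim that ``$2$-connectivity supplies a second connection disjoint from'' the spine is false: the spine can separate $\{x_1,\dots,x_{p-1}\}$ from $\{x_{q+1},\dots,x_k\}$ in a $2$-connected graph --- e.g.\ the path $x_1\cdots x_7$ with chords $x_1x_3$, $x_5x_7$, $x_3x_6$ and one extra vertex $w$ joined to $x_2$ and $x_4$; here every connection between the two ends passes through $\{x_3,x_4,x_5\}$. In that situation the cycle must be routed through the components of $G$ minus the spine via their attachment vertices, with separate constructions according to whether those attachments interleave along the spine; this component analysis is the actual content of P\'osa's proof and is entirely absent from your plan. Acknowledging that the bridge must be ``well-placed'' is not the same as proving that a usable configuration always exists.
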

	
	Erd\H os and Gallai \cite{EG} proved the following result which has been used in several papers.
	\begin{lemma}[Erd\H os and Gallai \cite{EG}]\label{EG}
		Let $G$ be a 2-connected graph and $x,y$ be two given vertices. If every vertex other than $x,y$ has a degree at least $k$ in $G$, then there is an $(x,y)$-path of length at least $k$.
	\end{lemma}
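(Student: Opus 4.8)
The plan is to argue by contradiction from a longest $(x,y)$-path. Suppose the conclusion fails and let $P=v_0v_1\cdots v_m$ with $v_0=x$ and $v_m=y$ be a longest $(x,y)$-path, so that $m\le k-1$. First I would dispose of the case $V(P)=V(G)$: then $|V(G)|=m+1\le k$, so every internal vertex $v_i$ would satisfy $d_G(v_i)\le |V(G)|-1\le k-1<k$, contradicting the degree hypothesis (the internal vertices are distinct from $x,y$). Hence some vertex lies off $P$, and I fix a component $B$ of $G-V(P)$. Every vertex of $B$ is distinct from $x,y$, so has degree at least $k$ in $G$, with all of its neighbours in $V(B)\cup A$, where $A=N_G(V(B))\cap V(P)$ is the attachment set of $B$. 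Since $G$ is $2$-connected and $x,y\in V(P)\setminus V(B)$, the bridge $B$ cannot attach to a single vertex of $P$ (that vertex would be a cut-vertex), so $|A|\ge 2$.

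The engine of the argument is a rerouting principle justified by the maximality of $P$. If $v_a,v_b\in A$ with $a<b$ and $Q$ is any path from $v_a$ to $v_b$ whose interior lies in $B$, then replacing the subpath $v_aPv_b$ by $Q$ produces another $(x,y)$-path, of length $m-(b-a)+\ell(Q)$. By maximality this cannot exceed $m$, so $\ell(Q)\le b-a$ for every such detour $Q$. Because $B$ is connected, there is always such a detour through at least one vertex of $B$, giving $\ell(Q)\ge 2$; applied to a pair of \emph{consecutive} attachments this forces every gap between consecutive elements of $A$ to be at least $2$. Writing $A=\{v_{a_1},\dots,v_{a_s}\}$ with $a_1<\cdots<a_s$, this yields $m\ge a_s-a_1\ge 2(s-1)$, hence $s\le \tfrac{m}{2}+1$. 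On the other hand, picking any $c\in B$ and counting its neighbours inside $V(B)\cup A$ gives $k\le (|B|-1)+s$. Combining the two bounds shows that $B$ must be large (roughly $|B|\gtrsim k/2$), while the attachments are few and widely spaced.

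The main obstacle is to convert this tension into an actual contradiction, i.e.\ to exhibit a single $(x,y)$-path longer than $P$. The difficulty is that the rerouting principle only forbids \emph{local} improvements gap-by-gap, whereas I must use the whole bridge at once: I would try to build a long $v_{a_1}$--$v_{a_s}$ path that weaves the entire backbone $v_{a_1}\cdots v_{a_s}$ together with the many vertices of $B$, and then argue that the high minimum degree inside $B\cup A$ forces this combined path to use at least one extra vertex net, so that prepending $v_0\cdots v_{a_1}$ and appending $v_{a_s}\cdots v_m$ beats $P$. Making this quantitative --- balancing $|B|$ against the spread $a_s-a_1$ and handling several gaps simultaneously with vertex-disjoint excursions --- is where the real work lies, and is the step I expect to be delicate. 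An equivalent framing that may streamline the bookkeeping is to add a new vertex $z$ adjacent to $x$ and $y$: then $(x,y)$-paths of length $\ell$ correspond to cycles through $z$ of length $\ell+2$ in the $2$-connected graph $G+z$, and the target becomes producing a long cycle through $z$, which can be attacked with the rotation technique underlying {\blue Lemma \ref{L1}}.
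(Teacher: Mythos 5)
There is a genuine gap, and you name it yourself: the argument never produces a contradiction. Everything you establish is correct --- the case $V(P)=V(G)$ is handled properly, a bridge $B$ of $G-V(P)$ has attachment set $A$ with $|A|\ge 2$ by $2$-connectivity, the rerouting bound forces consecutive attachments to be at distance at least $2$ on $P$ (so $s\le \tfrac{m}{2}+1$), and the degree count gives $|B|\ge k-s+1$. But these facts are mutually consistent: a large clique-like blob $B$ of minimum degree $\ge k$ attached to $P$ at two widely separated vertices violates none of your inequalities, so the ``tension'' between $|B|$ being large and $A$ being sparse is not a contradiction at all. The missing step --- converting the size and degree of $B$ into a single $(x,y)$-path longer than $P$ --- is the entire content of the lemma, and the sketch you offer for it (weaving the backbone with vertex-disjoint excursions into $B$) does not work as stated, because local gap-by-gap reroutings can never gain length: each one is capped by exactly the length it removes. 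What is actually needed is a global argument, typically an induction (on the order of the graph or on $k$) applied to a rooted subgraph built from $B$ and its attachments, which produces a long $(v_{a_i},v_{a_j})$-path through $B$ that is then spliced into $P$; this is precisely the kind of inductive block-tree analysis that the paper carries out for its generalization, \Cref{L2.2}. Your alternative reformulation (adding a vertex $z$ adjacent to $x,y$ and seeking a long cycle through $z$) has the same defect: \Cref{L1} and the rotation technique give long cycles, but not cycles through a prescribed vertex, so anchoring the rotation at $z$ is again the unresolved core of the problem.

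For calibration: the paper does not prove this statement; it quotes it as a known theorem of Erd\H os and Gallai \cite{EG}, so there is no proof in the paper to compare against. That does not lower the bar for your proposal, which as written is a correct setup followed by an honest acknowledgment that the decisive step is absent.
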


	For two distinct vertices $x,y\in V(G)$, we call the triple $(G,x,y)$ is a \emph{rooted graph} with roots $x$ and $y$. The \emph{minimum degree} of  $(G,x,y)$ is defined to be $ \min\{d_G(v)~|~ v\in V(G)\setminus\{x,y\}\}.$  $(G,x,y)$ is said to be \emph{$2$-connected} if $G+xy$ is $2$-connected, where $G+xy=G$ if $xy\in E(G)$ and $G+xy$ is the graph obtained from $G$ by adding the edge $xy$ if $xy\notin E(G)$.
	
	Let $T(\delta)$ be the union of components of order $\delta$ or $\delta-1$. A useful generalization of \Cref{EG} is the following lemma, which helps characterize the local structure of the extremal graphs.
	
	\begin{lemma}\label{L2.2}
		For an integer $\delta\ge 2$,
		let $(G,x,y)$ be a $2$-connected rooted graph with minimum degree at least $\delta$ and order at least $\delta+3$. Then there exists an $(x,y)$-path of order at least $ \delta+3$ in $(G,x,y)$, unless
		$G-\{x,y\}=L(\delta)+ T(\delta)$, or
		$\delta=3$ and $G-\{x,y\}=S+ L(3)+T(3)$. 
		In particular, 
		for each component of $L(\delta)$, if $x$ is adjacent to a non-cutvertex in that component, then $y$ is adjacent only to the cut-vertex of the same component, and vice versa.
	\end{lemma}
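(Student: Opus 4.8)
The plan is to localise the problem to a single component of $G-\{x,y\}$ and then to induct on $|V(G)|$. The first observation is purely structural: since $(G,x,y)$ is $2$-connected, $G+xy$ has no cut-vertex, so every component $C$ of $G-\{x,y\}$ must be joined to \emph{both} $x$ and $y$ (if $C$ missed, say, $y$, then $x$ alone would separate $C$ from the rest of $G+xy$). Next, any $(x,y)$-path has its interior contained in a single such component, since that interior is a connected subpath avoiding $x,y$. Hence the longest $(x,y)$-path has order $2+\max_{C}\ell(C)$, where $\ell(C)$ denotes the maximum order of an $\bigl(N_G(x)\cap V(C),\,N_G(y)\cap V(C)\bigr)$-path inside $C$, and it suffices to find one component with $\ell(C)\ge\delta+1$. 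A vertex of $C$ has at most two neighbours among $\{x,y\}$, so its degree inside $C$ is at least $\delta-2$; thus $|V(C)|\ge\delta-1$, and a component with $|V(C)|\le\delta$ has order exactly $\delta-1$ or $\delta$, contributes $\ell(C)\le\delta$, and is recorded as a piece of $T(\delta)$.

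For a component $C$ with $|V(C)|\ge\delta+1$ I would pass to the augmented rooted graph $\widehat C$ obtained by adding roots $x^{\ast},y^{\ast}$ joined to $N_G(x)\cap V(C)$ and $N_G(y)\cap V(C)$ respectively. Each vertex of $C$ retains its $G$-degree in $\widehat C$ (root edges are merely redirected to $x^{\ast},y^{\ast}$), so $\widehat C$ has minimum degree at least $\delta$ away from its roots; and any cut-vertex of $\widehat C+x^{\ast}y^{\ast}$ would cut off a part of $C$ from $x,y$ and hence already be a cut-vertex of $G+xy$, so $(\widehat C,x^{\ast},y^{\ast})$ is $2$-connected. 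Since an $(x^{\ast},y^{\ast})$-path of order $\delta+3$ in $\widehat C$ is exactly an $\bigl(N_G(x)\cap V(C),N_G(y)\cap V(C)\bigr)$-path of order $\delta+1$ in $C$, and $|V(\widehat C)|=|V(C)|+2\ge\delta+3$, the induction hypothesis (\Cref{L2.2} at smaller order) applies to every large component as soon as $G-\{x,y\}$ is disconnected (so $|V(\widehat C)|<|V(G)|$): it either produces the desired long path, or forces $\widehat C-\{x^{\ast},y^{\ast}\}=C$ into exceptional form. As $C$ is connected, this means $C$ is a single $L_1(\delta)$, a single $T(\delta)$-component, or (for $\delta=3$) a single star, with the root-adjacency restriction inherited verbatim; collecting these across all components yields $G-\{x,y\}=L(\delta)+T(\delta)$, respectively $S+L(3)+T(3)$.

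The genuinely hard case, and the base of the induction, is when $H:=G-\{x,y\}$ is connected, for then the augmentation returns $G+xy$ and gives no reduction. Here I would argue directly. Applying \Cref{EG} to $G+xy$ already gives an internal path of order at least $\delta-1$, and the task is the stability statement: to push this up by two or else read off the structure. Taking a longest internal $\bigl(N_G(x)\cap V(H),N_G(y)\cap V(H)\bigr)$-path $Q$ and assuming $\ell(H)\le\delta$, I would combine endpoint-extension with P\'osa-type rotations (keeping one endpoint inside $N_G(x)$ and sliding the other through $N_G(y)$, in the spirit of the rotation argument behind \Cref{L1}) together with the block--cut-vertex decomposition of $H$. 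The degree bound then forces every block of $H$ to be a complete graph $K_{\delta}$, all sharing one cut-vertex, so that $H=K_1\vee hK_{\delta-1}=L_1(\delta)$; the only exceptions appear at small minimum degree, where $K_{\delta-1}$ degenerates, giving the star family when $\delta=3$ and a case checked by hand when $\delta=2$. Controlling the rotations and the degree accounting simultaneously, so as to force each block to be exactly a clique rather than merely dense, is the main obstacle.

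Finally, the ``in particular'' clause is both the last structural constraint and the step that makes the exceptional graphs genuinely short. Write an $L_i(\delta)$ with centre $c$ and cliques $Q_1,\dots,Q_{h_i}$ (recall $h_i\ge2$). If $x$ is adjacent to a non-centre $w\in Q_j$ while $y$ is adjacent to a non-centre $w'\in Q_k$ with $k\ne j$, then
\[
x,\,w,\,(Q_j\setminus w),\,c,\,(Q_k\setminus w'),\,w',\,y
\]
is an $(x,y)$-path of order $2\delta+1\ge\delta+3$, a contradiction. If instead $w,w'$ lie in the same clique, I would invoke a second clique $Q_m$ (which exists since $h_i\ge2$): each of its non-centre vertices needs an edge to $\{x,y\}$, and whichever root it attaches to now plays the role of the ``other'' clique, reducing to the previous case. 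Hence in each $L_i(\delta)$ at most one root meets the non-centre vertices; since every non-centre vertex has degree $\delta-1$ in $H$ and so needs an incident root edge, they all attach to that same root while the other root sees only $c$, which is exactly the stated dichotomy.
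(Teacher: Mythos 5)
Your reduction of the disconnected case and your treatment of the ``in particular'' clause are both sound, but the proposal has a genuine gap exactly where you flag it: the case where $H=G-\{x,y\}$ is connected. Your induction is on the order of $G$, and, as you yourself note, when $H$ is connected the augmentation returns $G+xy$ and gives no reduction, so this case is the base of your induction and must be proved outright. What you offer there is only a programme --- ``endpoint-extension with P\'osa-type rotations \dots together with the block--cut-vertex decomposition'' --- and you concede that forcing each block to be exactly a clique ``is the main obstacle.'' That obstacle is the whole content of the lemma: \Cref{EG} plus rotation arguments give a path of order about $\delta+1$ easily; the stability statement, identifying $L(\delta)$ and the star family as the \emph{only} obstructions to reaching $\delta+3$, is precisely what remains unproven in your write-up. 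A proof that ends by declaring its central difficulty unresolved is not a proof.

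The paper escapes this trap by inducting on $\delta$ rather than on $|V(G)|$. When $H$ is connected and $2$-connected, it deletes the root $x$, picks $u\in N(x)\cap V(H)$, and observes that $(G-x,u,y)$ is again a $2$-connected rooted graph, now with minimum degree at least $\delta-1$; the induction hypothesis at the smaller parameter yields a $(u,y)$-path of order at least $\delta+2$ (or an exceptional structure $L(\delta-1)$ or $S$, which is then eliminated directly using the degree-$\delta$ condition on $(G,x,y)$), and prepending $x$ finishes. When $H$ has cut-vertices, the same lower-$\delta$ hypothesis is applied inside the end-blocks (the paper's Claims 1 and 2) to produce long $(x,b_j)$-paths, and the block tree is then assembled by hand, which is where the exceptional graphs $L_1(\delta)$ and $K_{1,t}$ emerge. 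This re-rooting step --- trading one root vertex for one unit of minimum degree --- is the key idea your proposal is missing; without it, or a genuinely worked-out rotation argument in the connected case, the lemma is not established.
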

	
	\begin{proof}
		Note that adding the edge $xy$ does not affect  the order of a longest $(x,y)$-path in $G$. So we may assume that $x$ and $y$ are adjacent in $(G,x,y)$. Base on the definition of a rooted graph, $G$ is $2$-connceted.  
		Let $H=G-\{x,y\}$ and then $|H|\ge \delta+1$. 
		
		Suppose $H$ is connected.
		We use induction on $\delta$ to prove the statement.
		
		\textbf{The basic step for $\delta=2.$}
		If $H$ is $2$-connected, then $H$ contains a cycle $C$. Since $G$ is $2$-connected, there exist two vertex-disjoint $(\{x,y\},V(C))$-paths. Then we can find an $(x,y)$-path of order at least five in $G$. If $H$ is not $2$-connected, then $H$ has a cut-vertex. Consider its block-cutvertex tree. Since $G$ is $2$-connected, 
		there exists an end-block $B_1$ with cut-vertex $b_1$ such that one of the vertices $x$ and $y$ has a neighbor in $B_1-b_1$, and the other has a neighbor in $H-V(B_1-b_1)$.
		Without loss of generality, let $u\in N(x)\cap V(B_1-b_1)$ and $v\in N(y)\cap V(H-V(B_1-b_1)).$ 
		
		Suppose $v\in N(y)\cap V(H-V(B_1)).$
		Since $H$ contains a $(u,v)$-path $P$ of order at least three,  $xuPvy$ is an $(x,y)$-path of order at least five in $G$.
		Suppose $N(y)\cap V(H-V(B_1))=\emptyset$, that is, $v=b_1$. If $H$ contains another end-block $B_2$ with cut-vertex $b_2\ne b_1$, then $x$ has a neighbor in $B_2$ since $G$ is $2$-connected. Hence, we can find an $(x,y)$-path of order at least five in $G$.
		Suppose $H$ contains exactly one cut-vertex $b_1$. Now $N_H(y)=\{b_1\}$. If there is an end-block of order at least three, then we can easily find an $(x,y)$-path of order at least five. If each end-block has order two, then $H=L(2).$  
		
		\textbf{The induction step for $\delta\ge 3.$} Suppose \Cref{L2.2} holds for all $2$-connected rooted graphs with minimum degree less than $\delta$. 
		Suppose that $H$ is $2$-connected.
		Let $H'=G-x$.
		Since $G$ is $2$-connected, there is a vertex $u\in N(x)\cap V(H)$. Now $(H',u,y)$ is a $2$-connected rooted graph of order at least $\delta+2$ with minimum degree at least $\delta-1\ge 2.$ Since $H=H'-y$ is $2$-connected, $H'-\{u,y\}$ is connected. Note that $|H|\ge \delta+1$.
		
		By the induction hypothesis, there exists a $(u,y)$-path $P$ of order at least $ \delta+2$ in $H'$ and hence $xuPy$ is an $(x,y)$-path of order at least $\delta+3$ in $G$, unless $H'-\{u,y\}=L(\delta-1)$ or $\delta=4$ and $H'-\{u,y\}=S$.
		Since $H'-\{u,y\}$ is connected, both $L(\delta-1)$ and $S$ have exactly one component.
		Suppose $H'-\{u,y\}=L(\delta-1)$. Let $v_l$ be the cut-vertex of $L(\delta-1)$. Note that $u$ or $y$ is only adjacent to $v_l$ in $L(\delta-1)$.
		Without loss of generality, let $N_{L(\delta-1)}(y)=\{v_l\}$. By $\delta((G,x,y))\ge \delta,$  $[V(L(\delta-1)-v_l),\{x,u\}]$ is complete. We can find an $(x,y)$-path of order at least $\delta+3$. 
		Suppose $H'-\{u,y\}=S=K_{1,t}$ with $t\ge 3$. Let $v_t$ be the cut-vertex of $K_{1,t}$.
		By $\delta((G,x,y))\ge \delta=4,$ $[V(S-v_t),\{x,y,u\}]$ is complete. We can  find an $(x,y)$-path of order seven.
		Thus, there always exists an $(x,y)$-path of order at least $ \delta+3$ in $G$.
		
		Assume that $H$ has a cut-vertex and consider its block-cutvertex tree. If there exists an $(x,y)$-path of order $\delta+3$ in $G,$ we have done, so assume that $G$ contains no $(x,y)$-path of order at least $\delta+3.$
		For $1\le i \le s,$ let $B_i$ be an end-block with cut-vertex $b_i$.  Since $G$ is $2$-connected, without loss of generality, suppose $N(x)\cap V(B_j-b_j)\ne \emptyset$ for some $1\le j\le s$ and let $H_j=G[V(B_j)\cup\{x\}]$. 
		
		We now distinguish two cases based on the locations of the neighbors of $y$ in $H$.
		\vskip 3mm
		\textbf{Case~1.}{~$N(y)\cap V(B_i-b_i)= \emptyset$ for any $1\le i\le s$.}
		\vskip 3mm
		Since $G$ is $2$-connected, we assert that $N(x)\cap V(B_i-b_i)\ne \emptyset$ for any $1\le i\le s$. Otherwise, $G$ would have a cut-vertex, a contradiction. By $\delta((G,x,y))\ge \delta$ and $N(y)\cap V(B_i-b_i)= \emptyset$, we have $|B_i|\ge \delta$, for any $1\le i\le s$.
		\vskip 3mm
		{\bf Claim 1.} $H_j$ contains an $(x,b_j)$-path $P_j$ of order at least $\delta+1$ for any $1\le j\le s$.
		\begin{proof}
			Note that $(H_j,x,b_j)$ is a $2$-connected rooted graph of order at least $\delta+1$ with minimum degree at least $\delta$.
			If $\delta\ge 4$, by the induction hypothesis, there exits an $(x,b_j)$-path $P_j$ of order at least $\delta+1$ in $H_j$, unless $B_j-b_j=L(\delta-2)$, or $\delta=5$ and $B_j-b_j=S$.
			Since $B_j-b_j$ is connected,  both $L(\delta-2)$ and $S$ have exactly one component.
			Suppose $B_j-b_j=L(\delta-2)$. Let $v_l$ be the cut-vertex of $L(\delta-2)$. Note that $x$ or $b_j$ is only adjacent to $v_l$ in $L(\delta-2).$ Without loss of generality, let $N_{L(\delta-2)}(x)=\{v_l\}$. Then each vertex of $L_(\delta-2)\setminus\{v_l\}$ has degree at most $\delta-2$, a contradiction to $\delta((H_j,x,b_j))\ge \delta$. Suppose $B_j-b_j=S=K_{1,t_1}$ with $t_1\ge 3$, a contradiction to $\delta((H_j,x,b_j))\ge \delta=5.$
			Thus, there exists an $(x,y)$-path of order at least $\delta+1$.

			Suppose $\delta=3$. Recall that $(H_j,x,b_j)$ is a $2$-connected rooted graph of order at least four with minimum degree at least three. Let $u\in N(x)\cap V(B_j-b_j).$
			By $d_{H_j}(u)\ge 3$, there is a vertex $w\in N(u_1)\cap V(B_j-b_j)$.
			Since $G$ is $2$-connected, there are two internal vertex-disjoint $(w,\{u,b_j\})$-paths. Hence, there is an an $(x,b_j)$-path of order at least four in $H_j$.
			Thus, $H_j$ contains an $(x,b_j)$-path of order at least $\delta+1$. This proves Claim 1.
		\end{proof}
		
		Since $G$ is $2$-connected and $N(y)\cap V(B_i-b_i)= \emptyset$ for any $1\le i\le s$, we have  $[\{y\},V(H-\bigcup\limits_{i=1}^{s}V(B_i-b_i))]\ne\emptyset.$
		For $u,v\in V(G),$ denote $\phi(u,v)$ the length of a longest $(u,v)$-path. Let $\phi(u,A)=\max\limits_{v\in A}\phi(u,v).$ 
		We choose an end-block $B_k$ with $1\le k\le s$ such that $\phi(b_k,N_H(y))=\max\limits_{1\le i\le s}\phi(b_i,N_H(y))$.  Without loss of generality, let  $\phi(b_1,v)=\phi(b_k,N_H(y))$ and let $Q$ be a longest $(b_1,v)$-path. Clearly, $V(Q)\cap V(B_1)=\{b_1\}.$  We choose a longest $(x,b_1)$-path $P_1$. 
		By Claim 1, $|P_1|\ge \delta+1.$
		Now, there exists an $(x,y)$-path $P=xP_1b_1Qvy$ of order at least $\delta+2$. 
		
		By $|P|\le \delta+2$, we have $|P_1|=\delta+1$, $b_1=v$ and  $|P|=\delta+2.$
		By the maximality of $Q$, we have $N_H(y)\subseteq\{b_1,b_2,\cdots,b_s\}$. Hence, $H$ has exactly one cut-vertex. 
		If $|H_j|\ge \delta+2,$ then $(H_j,x,b_j)$ is a $2$-connected rooted graph of order at least $\delta+2$ with minimum degree at least $\delta$,
		similar to the proof of Claim 1, there exists an $(x,b_j)$-path of order $\delta+2$, a contradiction to the maximality of $P_1.$ Thus, $|H_j|=\delta+1$. By $\delta((G,x,y))\ge \delta$, $H_j$ is a complete graph of order $\delta+1$. Thus, $G-\{x,y\}=L(\delta)$, and $y$ is adjacent only to the cut-vertex of $L(\delta)$.
		
		\vskip 3mm
		\textbf{Case~2.} {$N(y)\cap V(B_i-b_i)\ne \emptyset$ for some $1\le i\le s$.}
		\vskip 3mm
		{\bf Claim 2.} For an end-block $B_j$, if $N(x)\cap V(B_j-b_j)\ne \emptyset$, then $H_j$ contains an $(x,b_j)$-path $P_j$ of order at least $\delta$.
		\begin{proof}
			Note that $(H_j,x,b_j)$ is a $2$-connected rooted graph of order at least $\delta$ with minimum degree at least $\delta-1$. We provide a brief proof of Claim 2, as it is similar to that of Claim 1.
			
			Suppose $\delta\ge 5$.
			Since $B_j$ is a block of order at least $ \delta-1$ and $d_{H_j}(u)\ge \delta-1$ for any $u\in V(B_j-b_j)$,
			by the induction hypothesis, there always exists an $(x,b_j)$-path of order at least $ \delta$ in $G$.
			Suppose $\delta=4$. Recall that $(H_j,x,b_j)$ is a $2$-connected rooted graph of order at least four with minimum degree at least three. Let $u_1\in N(x)\cap V(B_j-b_j)$. By $d_{H_j}(u_1)\ge 3$, there is a vertex $u_2\in N(u_1)\cap V(B_j-b_j)$. Since $B_j$ is $2$-connected, there exist two internal vertex-disjoint paths $u_1Q_1b_j$ and $u_2Q_2b_j$. Then $u_1u_2Q_2b_j$ is a path of order at least three and hence $xu_1u_2Q_2b_j$ is a path of order at least four in $H_j.$ 
			Suppose $\delta=3.$ Then there is an $(x,b_j)$-path of order at least three in $H_j$ since $B_j$ is a block of order at least two. 
			Thus, $H_j$ contains an $(x,b_j)$-path $P_j$ of order at least $\delta$. This proves Claim 2.
		\end{proof}

		Without loss of generality, suppose $N(x)\cap V(B_1-b_1)\ne \emptyset$ and
		$N(y)\cap V(B_2-b_2)\ne \emptyset$. 
		Note that $(G[V(B_2)\cup\{y\}], b_2,y)$ is a $2$-connceted rooted graph of order at least $\delta$ with minimum degree at least $\delta-1$.
		Similar to the proof of Claim 2, there exists a $(b_2,y)$-path of order at least $ \delta$ in $G[V(B_2)\cup\{y\}]$. Let $P_1$ be a longest $(x,b_1)$-path in $H_1$ and let $P_2$ be a longest $(b_2,y)$-path in $G[V(B_2)\cup\{y\}]$. Clearly, $|P_1|\ge \delta$ and $|P_2|\ge \delta.$
		There is a $(b_1,b_2)$-path $P_0$ such that $V(P_0)\cap (V(B_1)\cup V(B_2))=\{b_1,b_2\}$ in $H$.
		Then $xP_1b_1P_0b_2P_2y$ is an $(x,y)$-path $P$ of order at least $ 2\delta-2+|P_0|$ in $G.$ Recall that $|P|\le \delta+2$. By $\delta\ge3$, we have $\delta=3$, $|P_0|=1$ and $|P|=2\delta-2+|P_0|=5$. Now $b_1=b_2$, $|P_1|=\delta=3$ and $|P_2|=\delta=3$.
		It is easy to verify that both $B_1$ and $B_2$ are edges. 
		
		If $H$ has at least two cut-vertex, there is an end-block $B_f$ such that $b_f\ne b_1.$ Let $P_0'$ be a $(b_f,b_1)$-path in $H$. Note that either $N(x)\cap V(B_f-b_f)\ne \emptyset$ or $N(y)\cap V(B_f-b_f)\ne \emptyset$. If $N(x)\cap V(B_f-b_f)\ne \emptyset$, then there is an $(x,y)$-path $P'=xP_fb_fP_0'b_2P_2y$ of order at least $\delta+3.$ If $N(y)\cap V(B_f-b_f)\ne \emptyset$, then there is an $(x,y)$-path $P''=xP_1b_1P_0'b_fP_fy$ of order at least $\delta+3$. Both situations contradict  $|P|\le \delta+2.$ Hence,  $H$ has exactly one cut-vertex. Clearly, each block is an edge.
		By $|G|\ge \delta+3=6,$ $V(G)\setminus V(P)\ne \emptyset.$ There exist at least three end-blocks in $H$. Then $H=K_{1,t}$ where $t\ge 3.$
		
		Thus, if $H$ has a cut-vertex, then either there exists an $(x,y)$-path of order at least $ \delta+3$ in $G$, unless $H=L_1(\delta)$, or $\delta=3$ and $H=K_{1,t}$, where $L_1(\delta)$ is the unique component of $L(\delta)$ and $t\ge 3$.
		
		Now, we consider that $H$ is disconnected. Let $H=T_1\cup T_2\cup\cdots \cup T_k$ where $k\ge2$ and $T_i$ is a component of $H$ for $1\le i\le k$.
		By $\delta((G,x,y))\ge \delta$, each component of $H$ has order at least $\delta-1.$ If there exists no component of order at least $\delta+1,$ then we are done, so assume that $H$ has some components of order at least $\delta+1.$ 
		Select an arbitrary component $T_i$  of $H$ with $|T_i|\ge \delta+1$. 
		Similar to the above argument, we can find an $(x,y)$-path of order at least $ \delta+3$ in $G[V(T_i)\cup\{x,y\}]$, unless $T_i=L_i(\delta)$, or $\delta=3$ and $T_i=K_{1,t_i}$, where $L_i(\delta)$ is the unique component of $L(\delta)$ and $t_i\ge 3$.

		Thus, there exists an $(x,y)$-path of order at least $ \delta+3$ in $(G,x,y)$, unless
		$G-\{x,y\}=L(\delta)+ T(\delta)$, or
		$\delta=3$ and $G-\{x,y\}=S+ L(3)+T(3)$ (see Figure \ref{fig4}). 
		In particular, 
		for each component of $L(\delta)$, if $x$ is adjacent to a non-cutvertex in that component, then $y$ is adjacent only to the cut-vertex of the same component, and vice versa.
		This proves \Cref{L2.2}.
	\end{proof}        
	\begin{figure}[h]
		\centering
		\includegraphics[width=0.7\textwidth]{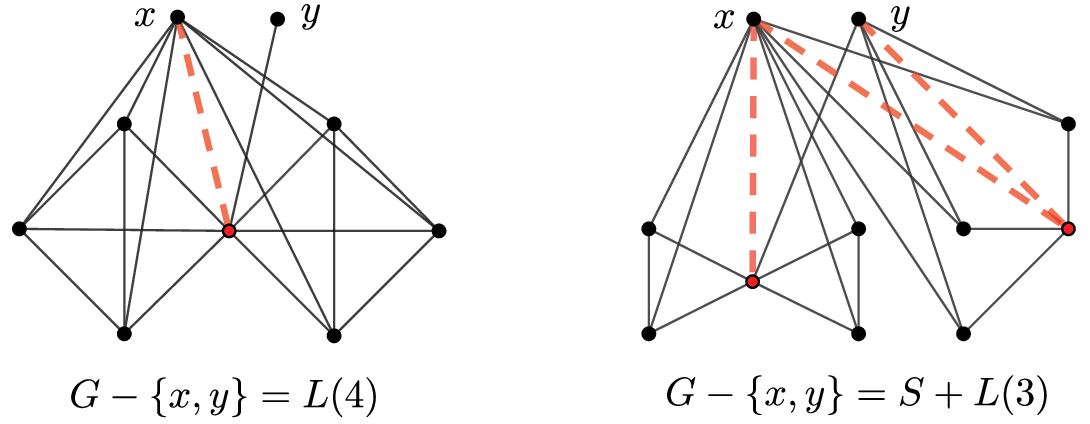}
		\caption{\footnotesize{The red dotted line is not necessarily present.}}
		\label{fig4}
	\end{figure}

	\begin{remark}\label{rm}
		By the definition of $L(\delta)$ and $S$, if either exists, then there is an $(x,y)$-path of order $\delta+2;$ note that $x$ and $y$ may be adjacent to the center of each star of $S$.
	\end{remark}

	\section{Proof of  Theorem \ref{thm1.3}}\label{sec4}
	
	Let $G$ be a 2-connected graph.
	We call graph $G$ \emph{edge-maximal} if for any edge $e\in E(\overline{G})$, $G+e$ will have a cycle of length at least $c(G)+1$. The following lemma will be very helpful for characterizing all extremal graphs in {\blue Theorem \ref{thm1.3}}.

	\begin{lemma}\label{l2.3}
		For two integers $w,\delta$ with $\omega\ge \delta\ge 2$,
		let $G$ be a $2$-connected edge-maximal non-hamiltonian graph of order $n$ with  $\delta(G)\ge \delta$, $\omega(G)\ge \omega$ and $c(G)=\omega+\delta$. Let $H\subseteq G$ be a complete graph of order $\omega(G)$, let $T=G[V(G)\setminus V(H)]$ and let $P=v_1v_2\dots v_m$ be a longest $(H,T)$-path with $v_1v_m\notin E(G)$. If $(d_P(v_1),d_P(v_m))= (\omega-1,\delta+1)$ or $(\omega,\delta)$, then
		\begin{wst}
			\item[{\rm(i)}] $G\in \{H_1(n,\omega,\delta+1),G_1,G_2\}$ when $(d_P(v_1),d_P(v_m))= (\omega-1,\delta+1);$ or
			\item[{\rm(ii)}] $G\in \{H_1(n,\omega+1,\delta),H_2(n,\omega+1,\delta),H_3(n,\omega,2),G_3,G_4\}$ when $(d_P(v_1),d_P(v_m))= (\omega,\delta).$
		\end{wst}
		
	\end{lemma}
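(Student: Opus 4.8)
The two hypothesized profiles both give $d_P(v_1)+d_P(v_m)=\omega+\delta=c(G)$, so the plan is to start from the fact that Pósa's lemma is \emph{tight} along $P$. Since $G$ is non-hamiltonian we have $c(G)=\omega+\delta<n$, hence $\min\{n,d_P(v_1)+d_P(v_m)\}=\omega+\delta$ and \Cref{L1} gives equality. If $P$ contained no crossing pair, the two ``furthermore'' clauses of \Cref{L1} would force $c(G)\ge\omega+\delta+1$, a contradiction; therefore $P$ must contain a crossing pair. Fixing a minimum crossing pair $(i,j)$, Pósa's construction yields a longest cycle $C$ of length $\omega+\delta$, with $V(C)=\{v_1,\dots,v_i\}\cup\{v_j,\dots,v_m\}$, so that the only path-vertices off $C$ lie strictly between $v_i$ and $v_j$, and all remaining vertices of $G$ attach to $C$ through the pieces of $G-V(C)$.

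Next I would pin down the clique end. Because $v_1\in V(H)$ lies in a maximum clique of order $\omega(G)\ge\omega$ while $d_P(v_1)\in\{\omega-1,\omega\}$, I would first determine $\omega(G)$ and the location of $H$ on $C$. Edge-maximality does the work here: any absent edge that would enlarge a clique, and any chord that would lengthen $C$ beyond $\omega+\delta$, is forbidden, which forces $V(H)\subseteq V(P)$ and places $H$ as a clique segment along $C$. In case~(i) ($d_P(v_1)=\omega-1$) the clique stays at order $\omega$ and the one extra unit of degree sits at the \emph{other} end $v_m$ (where $d_P(v_m)=\delta+1$), so the surplus either thickens the common hub to a $K_{\delta+1}$ (yielding $H_1(n,\omega,\delta+1)$) or is absorbed into a $K_2$/$L(\delta)$ attachment (yielding $G_1$ or $G_2$). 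In case~(ii) ($d_P(v_1)=\omega$) the extra neighbour of $v_1$ is the decisive object: it either completes $H$ to a $K_{\omega+1}$ (so $\omega(G)=\omega+1$, giving $H_1(n,\omega+1,\delta)$ or $H_2(n,\omega+1,\delta)$) or is realised as a specialised attachment in $T$ keeping $\omega(G)=\omega$ (giving $H_3(n,\omega,2)$, $G_3$, or $G_4$).

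The core of the argument is the analysis of $T$. I would examine the components of $G-V(C)$ together with their attachments to $C$: each such piece, rooted at its (one or two) attachment vertices $x,y$, is a $2$-connected rooted graph whose minimum degree is at least $\delta$, because $\delta(G)\ge\delta$ and any high-degree interior vertex would otherwise supply a chord lengthening $C$. If such a rooted piece admitted an $(x,y)$-path of order at least $\delta+3$, splicing it into $C$ through two independent attachments would produce a cycle longer than $\omega+\delta$, contradicting $c(G)=\omega+\delta$. Hence \Cref{L2.2} applies and every piece is confined to the exceptional families $L(\delta)+T(\delta)$, or, when $\delta=3$, $S+L(3)+T(3)$. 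This is precisely the source of the building blocks $K_{\delta-1}$, $K_\delta$, the stars $S$, and the components $L_i(\delta)$ appearing in the definitions of $G_1,G_2,G_3,G_4$, while \Cref{rm} governs how the path-ends $v_1$ (playing the role of $a_1$) and $v_m$ (the role of $a_2$) may attach to the centres and cut-vertices of those pieces.

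Finally I would reassemble: knowing (a)~the order and placement of $H$ on $C$, (b)~the two path-ends and their degrees, and (c)~that every off-cycle piece is one of the \Cref{L2.2} blocks attached only at the permitted roots, I would match each admissible global configuration against the listed graphs, checking in each case that $\delta(G)=\delta$, $\omega(G)=\omega$, and $c(G)=\omega+\delta$ are mutually consistent and that edge-maximality rules out every other edge pattern. The main obstacle is exactly this exhaustive bookkeeping—tracking simultaneously the position of the clique, the single surplus neighbour that separates~(i) from~(ii), and the number and type of off-cycle blocks (how many $K_{\delta-1}$, whether a $K_\delta$, a star, or an $L(\delta)$ appears)—while closing off all non-target configurations. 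I expect the small-degree exceptions $\delta=2$ and $\delta=3$, together with the $S$-exception of \Cref{L2.2}, to be the most delicate, since they are what produce the non-generic graphs $H_3(n,\omega,2)$, $H_4(n,\omega,3)$ and the $\delta(G_i)=3$ branch, and they must be carefully separated from the generic $H_1/G_1/G_2$ picture.
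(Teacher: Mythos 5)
Your plan follows the same strategic skeleton as the paper (tight P\'osa bound $\Rightarrow$ crossing pair $\Rightarrow$ cycle $C$ of length exactly $\omega+\delta$ $\Rightarrow$ analyze off-cycle pieces via \Cref{L2.2} $\Rightarrow$ reassemble using edge-maximality), but there are genuine gaps at the two places where the real work happens. First, the pivotal step is showing that every vertex outside the cycle attaches to $C$ only at a restricted set $A_2$, so that the relevant rooted piece truly has minimum degree at least $\delta$ and \Cref{L2.2} applies. Your one-line justification --- that ``any high-degree interior vertex would otherwise supply a chord lengthening $C$'' --- is not an argument: a chord from an off-cycle vertex to $C$ does not by itself lengthen any cycle. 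In the paper this step is the content of \Cref{c1}--\Cref{c8'}, a long sequence of path/cycle surgeries that exploits the exact identity $V(C)=N_P^-(v_1)\cup N_P[v_m]\setminus\{v_{q-1}\}$ and, crucially, splits on whether $(s,t)$ is a \emph{minimal} crossing pair. In the non-minimal branch $A_2$ is the alternating set $\{v_s,v_{s+2},\dots,v_t\}$, $X$ is an independent set, and a single off-cycle vertex attaches to $\delta$ or more vertices of $C$; your model of ``pieces rooted at (one or two) attachment vertices $x,y$'' cannot even represent these configurations, which are exactly the ones producing $H_1(n,\omega,\delta+1)$, $H_1(n,\omega+1,\delta)$ and $G_1$. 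Moreover, your splicing argument (``an $(x,y)$-path of order $\delta+3$ yields a longer cycle'') is incomplete: it lengthens the cycle only if the replaced arc is short, which is why the paper needs the bounds of type ``no $(v_s,v_t)$-path of order more than $\min\{|A_1|+2,|A_3|+2\}$'' together with \Cref{rm} to decide, branch by branch, whether $L(\delta)$ and $S$ are admissible; without that you cannot separate $H_2(n,\omega+1,\delta)$ from $G_2$, nor pin down the parameters $l_1,l_2,l_3$ in $G_2,G_3$.

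Second, your framework omits an entire branch of case (ii): when $v_1$ \emph{is} adjacent to two consecutive vertices of $v_sPv_t$ (the paper's \Cref{cc5} and \Cref{c5}), the extremal graph is $G_4$, whose defining feature (the edges $a_1c_1,a_2c_2$ replacing $b_2c_1,b_2c_2$) is not an \Cref{L2.2}-block attached at two permitted roots; likewise the $\delta=2$ analysis of \Cref{c6'} producing $H_3(n,\omega,2)$ and the $\delta=2$ instance of $G_3$. Hoping these fall out of ``small-degree exceptions'' of \Cref{L2.2} will not work: a scheme that only assembles $L(\delta)/T(\delta)/S$ blocks onto $C$ terminates with a list missing $G_4$, so it cannot prove the stated classification. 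Since the lemma \emph{is} the classification, the ``exhaustive bookkeeping'' you explicitly defer --- together with the two structural claims above --- is not routine verification but the substance of the proof, and as it stands the proposal does not establish the statement.
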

	
	\begin{proof}
		
		Let $k=\omega+\delta+1.$
		By the edge-maximality of $G$ and $v_1v_m\notin E(G),$ $m\ge k.$ By the maximality of $P$, $N_H(v_1)\subseteq N_P(v_1)$ and $N_G(v_m)=N_P(v_m).$ By \Cref{L1} and $c(G)=k-1$, $P$ has a crossing pair.
		Let $s=\min\{i:v_iv_m\in E(G)\}$ and $t=\max\{j:v_jv_1\in E(G)\}.$ Clearly, $s\ge 2$ and $t\le m-1.$
		By $c(G)=k-1$,
		\begin{align}\label{a1}
			~N_P^-(v_1)\cap N_P[v_m]=\emptyset ~\text{and} ~N_P^+(v_m)\cap N_P[v_1]=\emptyset.
		\end{align}
		
		Let $(p,q)$ be a minimal crossing pair of $P$ with $s\le p<q\le t$ and let $C=v_1Pv_pv_mPv_qv_1$.
		Clearly, $N_P[v_1]\cup N_P[v_m]\subseteq V(C)$.
		Then $N_P^-(v_1)\cup N_P[v_m]\setminus\{v_{q-1}\}\subseteq V(C)$ and $N_P[v_1]\cup N_P^+(v_m)\setminus\{v_{p+1}\}\subseteq V(C).$ By $c(G)=k-1$, we have $|V(C)|\le k-1$.
		Since $(d_P(v_1),d_P(v_m))$ is either $(\omega-1,\delta+1)$ or $(\omega,\delta)$, by (\ref{a1}), we have
		$$k-1=|N_P^-(v_1)\cup N_P[v_m]\setminus\{v_{q-1}\}|=|N_P[v_1]\cup N_P^+(v_m)\setminus\{v_{p+1}\}|\leq|V(C)|\le k-1.$$
		Then
		\begin{align}\label{a}
			V(C)=N_P^-(v_1)\cup N_P[v_m]\setminus\{v_{q-1}\}=N_P[v_1]\cup N_P^+(v_m)\setminus\{v_{p+1}\}.
		\end{align}
		By (\ref{a}), it is readily seen that every minimal crossing pair is in fact a minimum crossing pair. Clearly, $|C|=k-1.$
		We now present more structural properties of $P$ in $G$.
		
		\begin{claim}\label{c1}
			Let $(d_P(v_1),d_P(v_m))= (\omega-1,\delta+1)$. Then  $N_P[v_1]=N_P[v_i]$ for $1\le i\le s-1$; and for $t+1\le j\le m$,  $N_P[v_j]\subseteq N_P[v_m]$ and $|[\{v_j\},N_P(v_m)]|\ge \delta$.
		\end{claim}

		\begin{proof}
			For $1\le i\le s-1$, $v_i\notin N_P(v_m)$. By (\ref{a}), $v_{i+1}\in N_P(v_1)$.
			Then $V(v_1Pv_s)\subseteq N_P[v_1]$, similarly, $V(v_tPv_{m})\subseteq N_P[v_m]$.
		
			We prove that $N_P[v_i]\subseteq N_P[v_1]$ for $1\le i\le s-1$. If $s=2$, we are done, so assume that $s\ge 3.$ To the contrary, suppose there exists a vertex $v_h\in N_P[v_i]\setminus N_P[v_1]$ for some $ 2\le i\le s-1$.
			Clearly, $h\geq s+1.$
			If $v_h\notin V(C)$, then $v_1Pv_iv_hPv_mv_pPv_{i+1}v_1$ is a cycle of length at least $k$, a contradiction.
			Hence, $v_h\in V(C)$. 
			By $v_h\notin N_P[v_1]$ and (\ref{a}), $v_h\in N^+_P(v_m)$, that is, $v_{h-1}\in N_P(v_m)$.
			But $v_1Pv_iv_hPv_mv_{h-1}Pv_{i+1}v_1$ is an $m$-cycle, a contradiction.
			Thus, $N_P[v_i]\subseteq N_P[v_1]$ for $1\le i\le s-1$.
			By $N_P(v_1)=N_H(v_1)$, $N_P[v_i]=N_P[v_1]$ for $1\le i\le s-1$.
			
			We prove that $N_P[v_j]\subseteq N_P[v_m]$ for $t+1\le j\le m$. If $t=m-1$, we are done, so assume that $t\le m-2$. To the contrary, suppose that there exists a vertex $v_h\in N_P[v_j]\setminus N_P[v_m]$ for some $ t+1\le j\le m-1$.
			Clearly, $h\leq t-1.$
			If $v_h\notin V(C)$, then $v_1Pv_hv_jPv_mv_{j-1}Pv_qv_1$ is a cycle of length at least $k$, a contradiction.
			Hence, $v_h\in V(C)$. 
			By $v_h\notin N_P[v_m]$ and (\ref{a}), $v_{h+1}\in N_P(v_1)$. But $v_1Pv_hv_jPv_mv_{j-1}Pv_{h+1}v_1$ is an $m$-cycle, a contradiction. Thus, $N_P[v_j]\subseteq N_P[v_m]$ for $t+1\le j\le m$. Consider the $(H,T)$-path $P'=v_1Pv_{j-1}v_mPv_{j}$ with $t+1\le j\le m$. We have $\delta\le d_G(v_{j})=d_{P'}(v_{j})=d_P(v_{j})\le \delta+1$.
			If $d_P(v_j)=d_P(v_m)=\delta+1$, then $N_P[v_j]=N_P[v_m]$. If $d_P(v_j)=\delta$, then $|[\{v_j\},N_P(v_m)]|\ge \delta$.
			This proves \Cref{c1}.
		\end{proof}
		
		\begin{claim}\label{c2}
			Let $(d_P(v_1),d_P(v_m))= (\omega,\delta)$. Then $N_P[v_i]\subseteq N_P[v_1]$ for $1\le i\le s-1$ and $N_P[v_j]= N_P[v_m]$ for $t+1\le j\le m$; moreover, if $v_i\in V(H)$, then $|[\{v_i\},N_P(v_1)]|\ge \omega-1.$
		\end{claim}
		\begin{proof}
			Similar to the proof of \Cref{c1}, $N_P[v_i]\subseteq N_P[v_1]$ for $1\le i\le s-1$ and $N_P[v_j]\subseteq N_P[v_m]$ for $t+1\le j\le m$.
			Moreover, for $1\le i\le s-1$, if $v_i\in V(H)$, then $d_P(v_i)\ge \omega-1$ and $|[\{v_i\},N_P(v_1)]|\ge \omega-1.$
			Consider the $(H,T)$-path $P''=v_1Pv_{j-1}v_mPv_{j}$, for $t+1\le j\le m$. We have $d_G(v_{j})=d_{P''}(v_{j})=d_{P}(v_{j})= \delta$.
			Then $N_P[v_{j}]=N_P[v_m]$ for $t+1\le j\le m$.
			This proves \Cref{c2}.
		\end{proof}
		
		By the proof of \Cref{c1} and \Cref{c2}, $N_G[v_j]=N_P[v_j]$ for $t+1\le j\le m$.
		We need to define some sets. Let $A_1=V(v_1Pv_{s-1}),  A_2=N_P(v_1)\setminus V(v_2Pv_{s-1}),
		A_2'=N_P(v_m)\setminus V(v_{t+1}Pv_{m-1}), A_3=V(v_{t+1}Pv_{m})$ and let
		$X=V(G)\setminus (A_1\cup A_2\cup A_3)$. Clearly, $X\ne\emptyset.$

		\begin{claim}\label{c3}
			If $v_1$ and $v_m$ are not adjacent to any two consecutive vertices of  $v_sPv_t$,  then $A_2=A_2'=\{v_s,v_t\}$ when $(s,t)$ is a minimal crossing pair or  $A_2=A_2'=\{v_s,v_{s+2},\dotsc, v_{t-2},v_t\}$ where $t\equiv s\pmod 2$ and $m=k$ when $(s,t)$ is not a minimal crossing pair.
		\end{claim}
		
		\begin{proof}
			If $(s,t)$ is a minimal crossing pair, then $A_2=A_2'=\{v_s,v_t\}$ by \Cref{c1} and \Cref{c2}.
			Suppose that $(s,t)$ is not a minimal crossing pair.
			Let $(i,j)$ be a minimal crossing pair.
			Clearly, $i> s$ or $j< t$.
			Without loss of generality, let $i>s$.
			
			Since $v_m$ is not adjacent to any two consecutive vertices of $v_sPv_t$, we have $v_{i-1}v_m\notin E(G).$
			By (\ref{a}) and $v_i\notin N^+_P(v_m)$, $v_iv_1\in E(G)$.
			Since $v_1$ is not adjacent to any two consecutive vertices of $v_sPv_t$, we have $v_{i-1}v_1\notin E(G)$.
			By (\ref{a}) and $v_{i-2}\notin N^-_P(v_1)$, $v_{i-2}v_m\in E(G)$.
			Since $(i-2,i)$ is a minimal crossing pair, we have $\left|V(v_iPv_j)\right|=3$ by the choice of $(i,j).$
			Repeating the above arguments, we have $\{v_s,v_{s+2},\dots,v_{i-2},v_i\}=A_2\cap V(v_sPv_i)= A_2'\cap V(v_sPv_i)$ and $i\equiv s\pmod 2.$
			
			Similarly, $v_1v_{j+1}\notin E(G)$. By (\ref{a}) and $v_{j}\notin N^-_P(v_1),$ $v_{j}v_m\in E(G).$ Hence, $v_{j+1}v_m\notin E(G).$ By (\ref{a}) and $v_{j+2}\notin N^+_P(v_m)$, $v_1v_{j+2}\in E(G).$ Repeating the above arguments, we have $\{v_{j},v_{j+2},\dots,v_{t-2},v_{t}\}= A_2\cap V(v_jPv_t)= A_2'\cap V(v_jPv_t)$ and $t\equiv j\pmod 2.$
			
			Base on the fact that $\left|V(v_iPv_j)\right|=3$, $A_2=A_2'=\{v_s,v_{s+2},\dotsc,v_{t-2},v_t\}$ and $t\equiv s\pmod 2$. Moreover, $m=k$.
			This proves \Cref{c3}.
		\end{proof}
		
		By analyzing the structure of $G$ when $(d_P(v_1),d_P(v_m))=(\omega-1,\delta+1)$, we obtain  the following two claims.
		
		\begin{claim}\label{c4}
			If $(d_P(v_1),d_P(v_m))=(\omega-1,\delta+1)$, then $v_1$ and $v_m$ are not adjacent to any two consecutive vertices of  $v_sPv_t$.
		\end{claim}
		
		\begin{proof}
			To the contrary, suppose $v_i,v_{i+1}\in V(v_sPv_t)$ and $v_i,v_{i+1}\in N_P(v_1).$
			Since $c(G)=k-1$, we have $i\geq s+2.$
			By \Cref{c1}, $v_{i+1}v_{s-1}\in E(G).$
			Then $v_1Pv_{s-1}v_{i+1}Pv_mv_sPv_iv_1$ is a cycle of length $m\ge k$,  a contradiction.
			
			Suppose $v_i,v_{i+1}\in N_P(v_m).$
			Since $c(G)=k-1$, $i+1\leq t-2.$
			By \Cref{c1}, $v_{t+1}v_i\in E(G)$ or $v_{t+1}v_{i+1}\in E(G)$.
			If $v_{t+1}v_i\in E(G)$, then $v_1Pv_iv_{t+1}Pv_mv_{i+1}Pv_tv_1$ is a cycle of length  $m\ge k$, a contradiction.
			If $v_{t+1}v_{i+1}\in E(G)$, then $v_1Pv_iv_mPv_{t+1}v_{i+1}Pv_tv_1$ is a cycle of length  $m\ge k$, a contradiction.
			This proves \Cref{c4}.
		\end{proof}
		
		\begin{claim}\label{c7'}
			If $(d_P(v_1),d_P(v_m))=(\omega-1,\delta+1)$, then each vertex of $X$ can only be connected to $A_2\subseteq A_1\cup A_2\cup A_3$; moreover, if $(s,t)$ is not a minimal crossing pair, then $X$ is an independent set of $G$.
		\end{claim}
		
		\begin{proof}
			By \Cref{c1}, \Cref{c3} and \Cref{c4}, $N_P[A_1]=A_1\cup A_2$ and $N_G[A_3]\subseteq A_2\cup A_3$, that is, $[X\cap V(P), A_1]=\emptyset$ and $[X, A_3]=\emptyset$. 
			First, we show that each vertex of $X\setminus V(P)$ is only connected to $A_2\subseteq A_1\cup A_2\cup A_3$. 
			To the contrary, suppose $z\in X\setminus V(P)$ is connected to $A_1\cup A_2\subseteq A_1\cup A_2\cup A_3$.
			Since $G$ is 2-connected, there exist two vertex-disjoint $(z,P)$-paths $P_1$ and $P_2$ with $V(P)\cap V(P_i)=x_i$ for $i=1,2$. Assume that $x_1\in V(v_1Px_2).$ By the maximality of $P$, $x_1x_2\notin E(P)$.
			
			Suppose $x_1,x_2\in A_1$. By $V(H)= N_P[v_1]$, $x_1^-x_2^-\in E(G)$ or $x_1^+x_2^+\in E(G)$.
			If $x_1^-x_2^-\in E(G)$, then $v_1Px_1^-x_2^-Px_1P_1zP_2x_2Pv_m $ is an $(H,T)$-path longer than $P$, a contradiction.
			If $x_1^+x_2^+\in E(G)$, then $v_1Px_1P_1zP_2x_2Px_1^+x_2^+Pv_m$ is an $(H,T)$-path longer than $P$, a contradiction.
			Thus, $x_1\notin A_1$ or $x_2\notin A_1$.
			Without loss of  generality,  let $(s,t)$ be a minimal crossing  pair. Recall that $v_1Pv_sv_mPv_tv_1$ is a $(k-1)$-cycle.
			Another situation is similar. If $x_1\in A_1$ or $x_2\in X\cap V(P)$, then  $v_1Px_1P_1zP_2x_2Pv_mv_sPx_1^+v_1$ is a cycle of length at least $k$, a contradiction.
			Suppose $x_1\in A_1$ and $x_2\in A_2$. If $x_2=v_t$, then $v_1Px_1P_1zP_2x_2Pv_mv_sPx_1^+v_1$ is a cycle of length at least $k$, a contradiction.
			Hence, $x_2=v_s$.
			By $x_1x_2\notin E(P)$, $x_1\ne v_{s-1}$ and then $v_1Px_1P_1zP_2x_2v_mPv_tv_{s-1}Px_1^+v_1$ is a cycle of length at least $k$, a contradiction.
			
			By $[X\cap V(P), A_1\cup A_3]=\emptyset$, each vertex of $X\cap V(P)$ is only connected to $A_2\subseteq A_1\cup A_2\cup A_3$.
			Thus, each vertex of $X$ can only be connected to $A_2\subseteq A_1\cup A_2\cup A_3$. 
			
			Assume that $(s,t)$ is not a minimal crossing pair. We show that $X$ is an independent set of $G$.
			By \Cref{c3}, $A_2=A_2'=\{v_s,v_{s+2},\dotsc,v_{t-2},v_t\}.$
			For any two vertices $y_1,y_2\in X\cap V(P)$, there exists a $(y_1,y_2)$-path of order $k$ (see Figure \ref{fig5}(a)).
			Hence, $X\cap V(P)$ is an independent set of $G$.
			For any two vertices $y_1,y_2\in A_2$, there exists a $(y_1,y_2)$-path of order $k-2$ (see Figure \ref{fig5}(b)); for any two vertices $y_1\in A_2$ and $y_2\in X\cap V(P)$, there exists a $(y_1,y_2)$-path of order $k-1$ (see Figure \ref{fig5}(c)).
			Since $c(G)=k-1$ and $G$ is 2-connected, $X\setminus V(P)$ is an independent set of $G$ and $[X\cap V(P), X\setminus V(P)]=\emptyset$.
			Thus, $X$ is an independent set of $G$. This proves \Cref{c7'}.
		\end{proof}
		
		\begin{figure}[h]
			\centering
			\includegraphics[width=1\textwidth]{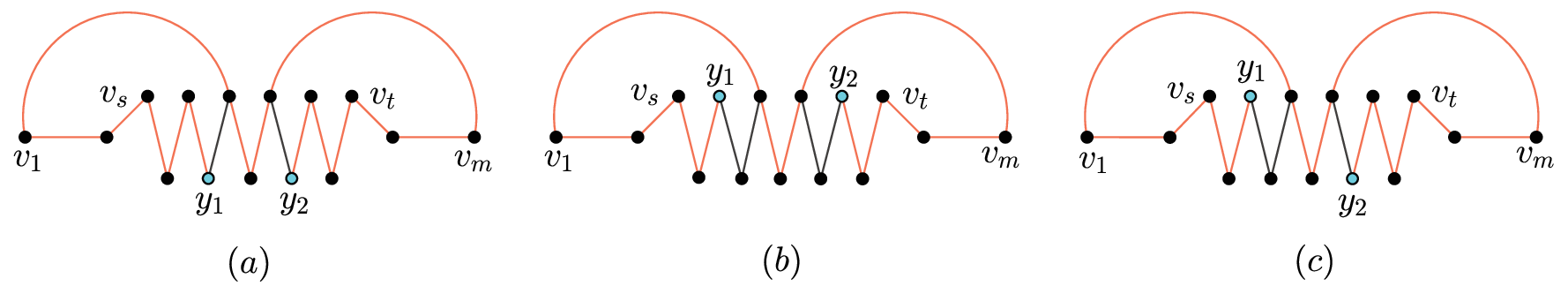}
			\caption{\footnotesize{The red path is a $(y_1, y_2)$-path we identified.}}
			\label{fig5}
		\end{figure}
		
		Our analysis of the structure of $G$ under the condition $(d_P(v_1),d_P(v_m))=(\omega,\delta)$ leads to the following four claims.
		
		\begin{claim}\label{cc5}
			If $(d_P(v_1),d_P(v_m))=(\omega,\delta)$, then $v_m$ is not adjacent to any two consecutive vertices of  $v_sPv_t$; moreover, if $v_1$ is adjacent to two consecutive vertices of $v_sPv_t$, then $N_P[v_1]\setminus V(H)=\{v_2\}=\{v_{s-1}\}$.
		\end{claim}

		\begin{proof}
			Let $v_j,v_{j+1}\in V(v_sPv_t)$.
			Suppose $v_j,v_{j+1}\in N_P(v_m).$ Then $j+1\le t-2$. Similar to the proof of \Cref{c4}, it is easy to find an $m$-cycle, a contradiction. Assume that   $v_j,v_{j+1}\in N_P(v_1)\cap V(v_sPv_t).$ Clearly, $j\ge s+2.$ 
			
			Suppose $v_2\notin N_P[v_1]\setminus V(H).$
			By \Cref{c2}, $v_2v_j\in E(G)$ or $v_2v_{j+1}\in E(G)$. 
			If $v_{2}v_j\in E(G)$, then
			$v_1v_{s-1}Pv_2v_jPv_sv_mPv_{j+1}v_1$ is an $m$-cycle. If $v_2v_{j+1}\in E(G)$, then $v_1v_{s-1}Pv_2v_{j+1}\allowbreak Pv_mv_sPv_jv_1$ is an $m$-cycle. Both situations lead to a contradiction. 
			
			Suppose $v_2\in N_P[v_1]\setminus V(H)$ and $v_2\ne v_{s-1}$. By \Cref{c2}, $v_{s-1}v_j\in E(G)$ or $v_{s-1}v_{j+1}\in E(G)$. 
			If $v_{s-1}v_j\in E(G)$, then $v_1Pv_{s-1}v_jPv_sv_mPv_{j+1}v_1$ is an $m$-cycle. If $v_{s-1}v_{j+1}\in E(G)$, then $v_1Pv_{s-1}v_{j+1}Pv_mv_sPv_jv_1$ is an $m$-cycle. Both situations lead to a contradiction.
			
			Thus, if $v_1$ is adjacent to  two consecutive vertices of $v_sPv_t$, then $N_P[v_1]\setminus V(H)=\{v_2\}=\{v_{s-1}\}$.
			This proves \cref{cc5}.
		\end{proof}
		
		\begin{claim}\label{c5}
			If $(d_P(v_1),d_P(v_m))=(\omega,\delta)$ and $v_1$ is adjacent to two consecutive vertices of $v_sPv_t$, then $G=G_4$.
		\end{claim}
		\begin{proof}
			By \cref{cc5}, $v_2\in N_P[v_1]\setminus V(H)$ and $s=3$.
			By $d_P(v_1)=\omega$,  we have $|H|=\omega(G)=\omega.$
			
			Suppose that $v_3Pv_t$ contains $h$ vertex-disjoint segments $v_{i_1}Pv_{j_1},v_{i_2}Pv_{j_2},\dots,v_{i_h}Pv_{j_h}$ with $i_1< i_2 < \dots< i_h,$ where each segment is a subpath of order at least two and $V(v_{i_f}Pv_{j_f})\subseteq N_P[v_1]$ for $1\le f\le h$. Without loss of generality, assume that each segment contains as many vertices as possible. Next, we show that $h=1$.
			
			By $v_3v_m,v_{i_{1}+1}v_1\in E(G)$ and $c(G)=k-1$, $i_1\ge 5$. Note that there exists a minimal crossing pair $(p_1,q_1)$ of $P$ with $3\le p_1<q_1\le i_1$.  Similar to the proof of \Cref{c3}, we obtain $$N_P(v_1)\cap V(v_3Pv_{i_1})=N_P(v_m)\cap V(v_3Pv_{i_1})=\{v_3,v_5,\dots,v_{i_1-4},v_{i_1-2}\}$$ where $i_1\equiv 3\pmod 2$. 
			For $1\le f\le h$,
			a similar analysis of the subpath 
			$v_{j_f}Pv_{i_{f+1}}$(suppose $i_{h+1}=t$) shows that $$N_P(v_1)\cap V(v_{j_f}Pv_{i_{f+1}})=N_P(v_m)\cap V(v_{j_f}Pv_{i_{f+1}})=\{v_{j_{f}},v_{j_{f+2}},\dots,v_{i_{f+1}-4},v_{i_{f+1}-2}\}$$ where $i_{f+1}\equiv j_f\pmod 2$.
			Hence, for any vertex $x\in V(v_3Pv_t)\setminus N_P(v_1)$, $x^+,x^-\in N_P[v_1]$.
			Let $W=V(v_3Pv_t)$. By $c(G)=k-1$, we have
			$$N_P(v_m)\cap W=(N_P(v_1)\cap W)\setminus (\mathop{\cup}\limits_{f=1}^{h}V(v_{i_f}Pv_{j_f^-})).$$
			Consider the $(H,T)$-path $v^+_{i_1}Pv_mv_3Pv_{i_1}v_1v_2$. By the choice of $P$ and \Cref{c2}, $N_G(v_2)=N_P(v_2)\subseteq N_P[v_1]$. 
	
			Clearly, $[\{v_2\},V(v_{i_f}Pv_{j_f})]=\emptyset$ for $1\le f\le h$. Hence, 
			$$N_P(v_2)\cap W\subseteq (N_P(v_m)\cap W)\setminus \{v_{j_1},v_{j_2},\dots,v_{j_h}\}.$$ 
			Let $N_W(v_2)=N_P(v_2)\cap W$ and $N_W(v_m)=N_P(v_m)\cap W$. Then $$\delta-1\le |N_G(v_2)|-1=|N_W(v_2)|\le |N_W(v_m)|-h= |N_G(v_m)|-h-(m-t-1)=\delta-h-(m-t-1).$$ By $m\ge t+1$ and $h\ge 1$, $h=1$ and $m=t+1$. Each inequality becomes equality. 
			Thus, $$N_P(v_2)\cap W=(N_P(v_m)\cap W)\setminus \{v_{j_1}\}.$$
			By $|N_W(v_m)|=|N_W(v_2)|+1=d_P(v_2)\ge \delta$, $j_1=t.$
			Recall that $v_{i_1}\ne v_3$.
			Then $N_P(v_1)\cap W= N_P(v_m)\cap W=\{v_3,v_5,\dots,v_{i_1-4},v_{i_1-2},v_t\}$ where $i_1\equiv 3\pmod 2$.
			Since $v_1v_2v_3v_mPv_5v_1$ is an $(m-1)$-cycle, we have $m=k$.

			We show that $V(G)\setminus V(H)$ is an independent set. Recall that for any vertex $x\in W\setminus N_P(v_1)$, then $x^+,x^-\in N_P[v_1]$. For any two vertices $x,y \in V(P)\setminus N_P[v_1]$, there exists an $(x,y)$-path of order $k$. Thus, $V(P)\setminus N_P[v_1]$ is an independent set. For any two vertices $u,v \in  N_P[v_1]\cap W$, there exists a $(u,v)$-path of order at least $k-2$. Since $G$ is $2$-connected, there exist two vertex-disjoint $(z,P)$-paths $P_1$ and $P_2$ with $V(P)\cap V(P_i)=x_i$ for $i=1,2$ and $z\in V(G)\setminus V(P)$. Clearly, $x_1,x_2\in N_P[v_1]\cap W.$ Hence, $P_i$ is an edge with $i=1,2$. Thus, $V(G)\setminus V(P)$ is an independent set and then  $V(G)\setminus V(H)$ is an independent set.
			
			We assert that $i_1=5$.  Suppose $i_1\ne 5$. 
			Since $V(G)\setminus V(H)$ is an independent set, we have $N_G(x)\subseteq N_W(v_m)$ for any vertex $x\in W\setminus N_P(v_1).$
			By $d_P(v_{i_1-3})\ge \delta$, $v_{i_1-3}v_{j_1}\in E(G)$. Then $v_{i_1-2}v_mv_{j_1}v_{i_1-3}Pv_2v_1v_{j_1-1}Pv_{i_1}v_{i_1-1}v_{i_1-2}$ is a $k$-cycle, a contradiction.

			Now $d_G(v_m)=\delta=2.$
			From the above, we determine the neighborhood of the vertices in $V(P)\setminus V(H)$ within the graph $G$ (see Figure \ref{fig6}).
			
			\begin{figure}[h]
				\centering
				\includegraphics[width=0.5\textwidth]{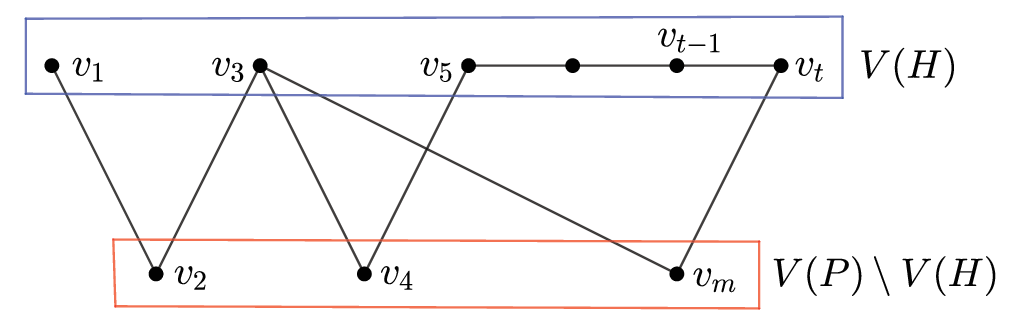}
				\caption{\footnotesize{The neighborhood of the vertices in $V(P)\setminus V(H)$ within the graph $G$.}}
				\label{fig6}
			\end{figure}
			
			Let $w$ be any vertex in $V(G)\setminus V(P)$. Clearly, $N(w)\subseteq \{v_1,v_3,v_5,v_t\}.$
			We assert that $N(w)=\{v_3,v_5\},\{v_3,v_t\}~\text{or}~\{v_1,v_3\}$. Suppose not. 
			Without loss of generality, suppose $\{v_5,v_t\}\subseteq N(w)$. Then $v_1Pv_5wv_tPv_6Pv_{t-1}v_1$ is a cycle of length $k,$ a contradiction. 
			
			Hence, $\delta=2$, $\omega\ge 4$ and $G=G_4$. This proves \Cref{c5}.
		\end{proof}

		\begin{claim}\label{c6'}
			If  $(d_P(v_1),d_P(v_m))=(\omega,2)$ and $v_1$ is not adjacent to any two consecutive vertices of $v_sPv_t$, then
			$G\in  \{H_1(n,\omega+1,2), H_3(n,\omega,2),G_3\}.$
		\end{claim}
		\begin{proof}
			By $\delta=2$, $t=m-1$ and $N_P(v_m)=\{v_s,v_{m-1}\}$.
			By \Cref{c3} and $c(G)=k-1$, $t-s=2$ and $(s,t)$ is a minimal crossing pair. Clearly, $X\cap V(P)=\{v_{t-1}\}$. By \Cref{c2}, $N_P[A_1]\subseteq A_1\cup A_2$ and $N_G(v_m)\subseteq A_2.$
			Suppose $X\setminus V(P)= \emptyset$, that is, $|X|=1.$ By the maximality of $G$, $G=K_2\vee(K_{\omega-1}+2K_1)=
			H_1(n,\omega+1,2)$.
			Assume that $X\setminus V(P)\neq \emptyset$. Consider the path $v_1Pv_sv_mv_tv_{t-1}. $ Then $[X\cap V(P),X\setminus V(P)]=\emptyset$ and then $N_G(v_{t-1})=\{v_s,v_t\}.$
			
			First, we show that $X$ is an independent set of $G$. To the contrary, suppose $x_1,x_2\in X\setminus V(P)$ with $x_1x_2\in E(G).$
			Since $G$ is 2-connected, there are two vertex-disjoint paths $x_1P_1y_1$ and $x_2P_2y_2$ where $V(P_i)\cap V(P)=\{y_i\}$ for $i=1,2$. Assume that $y_1\in V(v_1Py_2).$ By the maximality of $P$, $|V(y_1Py_2)|\ge 4.$
			Since $N_P(v_m)=\{v_s,v_t\}$ and $[X\cap V(P),X\setminus V(P)]=\emptyset$, we have $y_1,y_2\in  A_1\cup \{v_s\}.$
			Without loss of generality, let $y_1\in A_1$.
			Suppose $y_2=v_s$.
			If $y_2^-\in V(H)$, then $y_2^-Py_1^+v_1Py_1P_1x_1x_2P_2v_sPv_m$ is an $(H,T)$-path longer than $P$, a contradiction.
			If $y_2^-\notin V(H)$, then $y_1^+\in V(H)$ and then $y_1^+Py_2^-v_1Py_1P_1x_1x_2P_2v_sPv_m$ is an $(H,T)$-path longer than $P$, a contradiction.
			Hence, $y_2\in  A_1$. 
			If $y_1^+,y_2^+\in V(H)$, then  $v_1Py_1P_1x_1x_2P_2y_2Py_1^+y_2^+Pv_m$ is an $(H,T)$-path longer than $P$, a contradiction.
			Hence, $y_1^+\in N_P[v_1]\setminus V(H)$ or $y_2^+\in N_P[v_1]\setminus V(H)$.
			Then $y_2^-\in V(H)$ and then $y_2^-Pv_1P_1x_1x_2P_2y_2Pv_m$ when $v_1=y_1$, or $v_1Py_1^-y_2^-Py_1P_1x_1x_2P_2y_2Pv_m$  when $v_1\ne y_1$ is an $(H,T)$-path longer than $P$, a contradiction. Thus, $X$ is an independent set of $G$.
			
			If each vertex in $X\setminus V(P)$ is only connected to $A_2\subseteq A_1\cup A_2\cup A_3$, by the maximality of $G$, then $G=H_1(n,\omega+1,2).$
			Suppose there is a vertex $x_0\in X\setminus V(P)$ that is connected to $A_1\cup A_2\subseteq A_1\cup A_2\cup A_3$.
			Since $G$ is 2-connected, there are two vertex-disjoint $(x_0,P)$-paths $P_1$ and $P_2$ with $V(P_i)\cap V(P)=\{z_i\}$ for $i=1,2$. Since $X$ is an independent set, $P_i$ is an edge for $i=1,2.$ Assume that $z_1\in V(v_1Pz_2)$ and $(z_1,z_2)\neq (v_s,v_{t}).$ Clearly, $|V(z_1Pz_2)|\ge3$.
			
			We assert that $z_1=v_1$. Suppose not. Since $z_1^-Pv_1z_2^-Pz_1x_0\allowbreak z_2Pv_m$ is a path longer than $P$, we have $ N_P(v_1)\setminus V(H)=\{z_1^-\}$, and hence $z_1^+\in V(H)$. Now $z_1^+Pz_2^-v_1Pz_1x_0z_2Pv_m$ is an $(H,T)$-path longer than $P,$ a contradiction.
			
			We assert that $|V(z_1Pz_2)|=3$. Suppose $|V(z_1Pz_2)|\ge 4$.
			We show that $z_2\in A_1.$ Otherwise, supppose $z_2\notin A_1.$ If $z_2=v_t$, then $v_1x_0v_tv_mv_sPz_1^+v_1$ is a cycle of length $m$, a contradiction.
			Thus, $z_2=v_s$. Since $z_1^+Pz_2^-v_1\allowbreak x_0z_2Pv_m$ is a path than $P$, we have $N_P(v_1)\setminus V(H)=\{z_1^+\}.$ By $|V(z_1Pz_2)|\ge 4$, $z_2^-\in V(H)$.
			Now $z_2^-Pv_1x_0v_sPv_m$ is an $(H,T)$-path longer than $P,$ a contradiction. Thus, $z_2\in A_1.$ 
			By $c(G)=k-1$, $z_1^+z_2^+\notin E(G)$ and then  $z_2^-\in V(H)$. Now $z_2^-Pv_1x_0z_2Pv_m$ is an $(H,T)$-path longer than $P,$ a contradiction. Thus, $|V(z_1Pz_2)|=3$.
			
			By $z_1=v_1$ and $|V(z_1Pz_2)|=3$, we have $z_2\in A_1\cup \{v_s\}.$ Suppose $z_2\in A_1$. Then $N_P(v_1)\setminus V(H)=\{z_1^+\}$ and hence $\omega(G)=\omega.$
			We assert that $d(z_1^+)=2$. Suppose $d(z_1^+)\ge 3$. By \Cref{c2}, $N(z_1^+)\subseteq V(H)$. It is easy to verify that there is a cycle of length $m$, a contradiction. 
			By $z_2\ne v_s$, $\omega(G)=\omega\ge 4$ and hence $G=H_3(n,\omega,2)$. Suppose $z_2=v_s.$ Then $\omega(G)=\omega=3$ and $\delta=2.$ 
			Thus, $G=G_3$ where $l_1\ge 1$, $l_2=0$, $l_3\ge 2$, $|S|=0$ and $|L(\delta)|=0$. 
			This proves \Cref{c6'}.
		\end{proof}
		
		By \Cref{c5} and \Cref{c6'}, in the case where $(d_P(v_1),d_P(v_m))=(\omega,\delta)$, we may assume  $\delta\ge 3$ and that $v_1$ is not adjacent to any two consecutive vertices of $v_sPv_t$ in $G.$ 
		
		\begin{claim}\label{c8'}
			For $\delta\ge 3$, if $(d_P(v_1),d_P(v_m))=(\omega,\delta)$ and $v_1$ is not adjacent to any two consecutive vertices of $v_sPv_t$, then each vertex of $X$ can only be connected to $A_2\subseteq A_1\cup A_2\cup A_3$; moreover, $X$ is an independent set of $G$ when $(s,t)$ is not a minimal crossing pair.
		\end{claim}
		\begin{proof}
			By \Cref{c2}, \Cref{c3},  \Cref{cc5} and \Cref{c5}, we have $N_P[A_1]\subseteq A_1\cup A_2$ and $N_G[A_3]\subseteq A_2\cup A_3.$  First, 
			we show that each vertex of $X\setminus V(P)$ can only be connected to $A_2\subseteq A_1\cup A_2\cup A_3$.
			
			To the contrary, suppose there exists a vertex $z\in X\setminus V(P)$ that is connected to $A_1\cup A_2\subseteq A_1\cup A_2\cup A_3$.
			Since $G$ is 2-connected, there exist two vertex-disjoint $(z,P)$-paths $P_1$ and $P_2$ with $V(P)\cap V(P_i)=x_i$ for $i=1,2$. Assume that $x_1\in V(v_1Px_2).$ By the maximality of $P$, $x_1x_2\notin E(P)$.
			Similar to the proof of \Cref{c7'}, if $x_1\notin A_1$ or $x_2\notin A_2$, then we can find a cycle of length at least $k$; if $x_1^-x_2^-\in E(G)$ or $x_1^+x_2^+\in E(G)$, then there exists an $(H,T)$-path longer than $P$.
			Both situations lead to a contradiction.
			
			Hence, $x_1,x_2\in A_1$ and $x_1^-x_2^-, x_1^+x_2^+\notin E(G)$.
			Suppose $x_1=v_1$. Now $x_2^-Pv_1P_1zP_2x_2Pv_m$ is a path longer than $P$. By the choice of $P$, $x_2^-\in N_P[v_1]\setminus V(H).$ By $x_1^+x_2^+\notin E(G)$, we have $x_1^+=x_2^-\in N_P[v_1]\setminus V(H)$ and hence $x_2=v_3$.  By $\delta\ge 3$ and \Cref{c2}, there is a vertex $y\in N_P(v_1)\setminus \{v_3\}$ such that $yx_1^+\in E(G)$. If $y\in A_1$, then there is an $(H,T)$-path longer than $P$, a contradiction. If $y\in A_2$, then there is a cycle of length at least $k$, a  contradiction.
			Hence, $x_1\ne v_1$. By  $x_1^-x_2^-, x_1^+x_2^+\notin E(G)$, we have $x_1^+=x_2^-\in N_P[v_1]\setminus V(H).$ 
			By \Cref{c2}, $v_1x_1^+\in E(G)$ and then $x_1^-Pv_1x_1^+x_1P_1zP_2x_2Pv_m$ is an $(H,T)$-path longer than $P$, a contradiction.
			
			By $[X\cap V(P),A_1\cup A_3]=\emptyset$, each vertex of $X\cap V(P)$ can only be connected to $A_2\subseteq A_1\cup A_2\cup A_3$.
			Thus, each vertex of $X$ is only connected to $A_2\subseteq A_1\cup A_2\cup A_3$.
			An argument similar to the proof of \Cref{c7'} shows that if $(s,t)$ is not a minimal crossing pair,
			then $X$ is an independent set of $G$.
			This proves \Cref{c8'}.
		\end{proof}

		Next, we consider the following two cases.
		\vskip 3mm
		{\bf Case 1.} $(d_P(v_1),d_P(v_m))=(\omega-1,\delta+1).$
		\vskip3mm
		Since $N_H(v_1)\subseteq N_P(v_1)$ and $N_G(v_m)=N_P(v_m),$ we have $\omega(G)=\omega$ and $\delta(G)\ge \delta$. Recall that $X\ne \emptyset.$
		
		{\bf Subcase 1.1.}
		$(s,t)$ is not a minimal crossing pair.
		
		By \Cref{c3}, $A_2=A_2'=\{v_s,v_{s+2},\cdots, v_{t-2},v_t\}.$
		By \Cref{c7'}, $X$ is an independent set of $G$. And then $|A_2|\ge \delta$ since $\delta(G)\geq \delta$.
		By $|A_2\cup A_3|=\delta+2$, $(|A_2|,|A_3|)$ is either $(\delta,2)$ or $(\delta+1,1).$
		
		Suppose $(|A_2|,|A_3|)=(\delta,2)$.
		By $A_1\cup A_2=N_P[v_1],$ $|A_1|=\omega-\delta.$ 
		Now $G$ is the graph with a vertex partition $A_1\cup A_2\cup A_3\cup X$ with sizes $\omega-\delta$, $\delta$, $2$ and $n-\omega-2$, respectively, whose edge set consists of $[X, A_2]$, all edges in $A_1\cup A_2$, and edges in $A_2\cup A_3$. Note that $\delta(G)=\delta$ and $\omega(G)\ge \delta+1$. Then $\omega=\omega(G) \ge\delta+1.$
		Since $G$ is non-hamiltonian,  $n-\omega-2+1\ge \delta,$ i.e., $n\ge \omega+\delta+1.$
		By the maximality of $G$, $G= G_1$.
		
		Suppose $(|A_2|,|A_3|)=(\delta+1,1)$.
		Now $G$ is the graph with a vertex partition $A_1\cup A_2\cup A_3\cup X$ with sizes $\omega-\delta-1$, $\delta+1$, $1$, and $n-\omega-1$,  respectively, whose edge set consists of $[X\cup A_3, A_2]$, and all edges in $A_1\cup A_2$.
		Note that $\omega=\omega(G)\ge \delta+2$. Then $\omega\ge \delta+2.$
		Since $G$ is non-hamiltonian,  $n-\omega-1+1\ge \delta+1,$ i.e., $n\ge \omega+\delta+1.$
		By the maximality of $G$, $G= H_1(n,\omega,\delta+1)$. 
		
		{\bf Subcase 1.2.}
		$(s,t)$ is a minimal crossing pair.
		
		Since $G$ is 2-connected, by \Cref{c7'}, $G[X\cup A_2]$ is 2-connected.
		By \Cref{c3}, $A_2=A_2'=\{v_s,v_t\}$, $|A_1|=\omega-2$ and $|A_3|=\delta$.
		By the maximality of $G$ and \Cref{c7'}, $G[A_2\cup A_3]=K_{\delta+2}$.
		Since $c(G)=k-1,$ $G[A_2\cup X]$ contains no $(v_s,v_t)$-path of order more than $\min\{|A_1|+2,|A_3|+2\}$. By $\delta(G)\ge \delta$, we have $|X|\ge \delta-1.$
		Based on the relationship between the sizes of $A_1$ and $A_3$, we proceed to consider the following two cases.
		
		\item[$\bullet$]~ $|A_1|< |A_3|.$

		Suppose $\omega=\delta$. By \Cref{c7'}, each vertex of $X$ is only connected to $A_2\subseteq A_1\cup A_2 \cup A_3$. Hence, $A_1=\emptyset.$ Otherwise, 
		each vertex of $A_1$ has degree $\omega-1=\delta-1$, a contradiction. By $|A_1|=\omega-2,$ we have $\omega=\delta=2.$ Then there exists no $(v_s,v_t)$-path of order more than two in $G[A_2\cup X]$, contradicting $|X|\ge \delta-1=1$.  
		
		Thus, $\omega = \delta+1.$ There exists no $(v_s,v_t)$-path of order more than $\delta+1$ in $G[A_2\cup X].$
		Recall that $|X|\ge \delta-1.$ If $|X|=\delta-1,$ then $A_2\cup X$ is a $(\delta+1)$-clique since $\delta(G)\ge \delta$. Now $G=G_2$ where $l_1=1,l_2=1$, $|S|=0$ and $|L(\delta)
		|=0.$ Suppose $|X|=\delta.$ Since there exist no $(v_s,v_t)$-path of order more than $\delta+1$ in $G[A_2\cup X]$, by \Cref{L2.2},
		we have $\delta=2$ and $X=\overline{K}_2$.
		Hence, $G=K_2\vee(K_1+K_2+\overline{K}_2)=G_2$ where $l_1=2$, $l_2=1$, $|S|=0$ and $|L(\delta)|=0$.
		Suppose $|X|\ge \delta+1$.
		Now $(G[A_2\cup X],v_s,v_t)$ is a $2$-connected rooted graph of order at least $\delta+3$ with minimum degree at least $\delta$. By \Cref{L2.2}, $G[X]$ consists of some components of order $\delta-1.$ By $\delta(G)\ge\delta,$ we have $G=G_2$ where $l_1\ge 2$, $l_2=1$, $|S|=0$ and $|L(\delta)|=0.$
	
		Thus, $G=G_2$, where $l_1\ge 1,l_2=1$, $|S|=0$ and $|L(\delta)|=0.$

		\item[$\bullet$]~ $|A_1|\geq |A_3|.$
		
		Now $\omega \ge \delta+2,$ there exists no $(v_s,v_t)$-path of order more than $\delta+2$ in $G[A_2\cup X]$.
		Recall that $|X|\ge \delta-1.$
		If $|X|=\delta-1,$ by the maximality of $G$, $A_2\cup X$ is a $(\delta+1)$-clique. Now 
		$G=G_2$ where where $l_1=1,l_2=1$, $|S|=0$ and $|L(\delta)|=0.$
		If $|X|=\delta,$ by the maximality of $G,$ $G=G_2$ where $l_1=0$, $l_2=2$, $|S|=0$ and $|L(\delta)|=0$.
		Suppose $|X|\ge \delta+1.$ Now $(G[A_2\cup X],v_s,v_t)$ is a $2$-connected rooted graph of order at least $\delta+3$ with minimum degree at least $\delta$.  By \Cref{L2.2}, $G[X]=L(\delta)\cup T(\delta)$; or $\delta=3$ and $G[X]=S\cup L(3)\cup T(3)$.
		If the former happens, by the maximality of $G,$ now $G=G_2$ where $l_1+|L(\delta)|\ge 1$, $l_2\ge 1$ and $|S|=0$.
		If the latter happens, $\delta=3,$ $G=G_2,$ where $l_1+l_2\ge 1$, $l_2\ge1,$ $|S|>0$ and $|L(\delta)|\ge 0$.
		
		Thus, $G=G_2$  where $l_2\ge 1$ and $X\cup A_3$ consists of at least two components.
		\vskip 3mm
		{\bf Case 2.} $(d_P(v_1),d_P(v_m))=(\omega,\delta)$.
		\vskip3mm 
		Since $N_H(v_1)\subseteq N_P(v_1)$ and $N_G(v_m)=N_P(v_m),$ we have $\omega(G)\ge \omega$ and $\delta(G)=\delta$. 
		By \Cref{cc5} -- \Cref{c6'}, it suffices to consider the case where $\delta\ge 3$ and neither $v_1$ nor $v_m$ is adjacent to any two consecutive vertices of $V(v_sPv_t).$ Recall that $X\ne\emptyset.$
		
		{\bf Subcase 2.1.}
		$(s,t)$ is not a minimal crossing pair.
		
		By \Cref{c8'}, $X$ is an independent set of $G$ and each vertex of $X$ can only be connected to $A_2\subseteq A_1\cup A_2\cup A_3$. By \Cref{c3}, $A_2=A_2'=\{v_s,v_{s+2},\cdots, v_{t-2},v_t\}.$
		By $\delta(G)\ge \delta$, we have $\left|A_2\right|\ge \delta$.
		Hence, $\left|A_2\right|= \delta$ and $\left|A_3\right|=1$ since $|A_2\cup A_3|=\delta+1$.
		
		Now, $G$ is the graph with a vertex partition $A_1\cup A_2\cup A_3\cup X$ with sizes $\omega-\delta+1$, $\delta$, $1$, and $n-\omega-2$ respectively, whose edge set consists of $[X\cup A_3, A_2]$, and edges in $A_1\cup A_2$.
		Since $G$ is non-hamiltonian, we have $n-\omega-2+1\ge \delta$, i.e., $n\ge \omega+\delta+1$.
		By the maximality of $G$, $G=H_1(n,\omega+1,\delta)$.
		
		{\bf Subcase 2.2.}
		$(s,t)$ is a minimal crossing pair.
		
		By \Cref{c3} and \Cref{c8'}, $N_G(A_1)\subseteq A_1\cup A_2$ and $N_G(A_3)\subseteq A_2\cup A_3$. Since $G$ is non-hamiltonian, we have $X\ne \emptyset.$ Then
		$d_X(z)\ge \delta-2$ for any vertex $z\in X$.
		Hence, $|X|\geq \delta-1$.
		Note that $|A_2|=2$, $|A_1\cup A_2|=\omega+1$ and $|A_2\cup A_3|=\delta+1$.
		Hence, $|A_1|=\omega-1\ge \delta-1=|A_3|$.
		By $c(G)=k-1$, $|V(v_{s+1}Pv_{t-1})|\le \min\{|A_1|,|A_3|\}=\delta-1$ and hence there exists no $(v_s,v_t)$-path of order more than $\delta+1$ in $G[A_2\cup X]$.

		If $\left|X\right|=\delta-1$, by $\delta(G)\ge\delta$, then $G[A_2\cup X]=K_{\delta+1}$ and then $G= H_2(n,\omega+1,\delta)$ where $l= 2.$
		If $|X|=\delta$, by $\delta(G)\ge\delta$ and \Cref{L2.2}, then $\delta=2$ and $X=\overline{K}_2$. Hence, $G=H_2(n,\omega+1,2)$ where $l= 3$.
		Suppose $|X|\ge \delta+1.$
		We consider a $2$-connected rooted graph $(G[A_2\cup X], v_s,v_t)$ of order at least $\delta+3$ with minimum degree at least $\delta$. By \Cref{L2.2}, $G[X]$ consists of some components of order $\delta-1$. By $\delta(G)\ge \delta,$ each component is a $(\delta-1)$-clique.
		By the maximality of $G$, $G[A_1\cup A_2]=K_{\omega+1}$ and $G[A_2\cup A_3]=K_{\delta+1}$.
		Then $G=H_2(n,\omega+1,\delta)$ where $l\ge 3.$

		Thus, $G=H_2(n,\omega+1,\delta)$ where $l\ge 2.$ 
		This  proves \Cref{l2.3}.
	\end{proof}
	
	We now prove our main result using an approach analogous to that in the proof of the \Cref{l2.3}.
	
	\vskip 3mm
	
	\begin{proof}[\bf  Proof of Theorem \ref{thm1.3}]
		If $G$ is hamiltonian, then $c(G)=n$ and we are done, so assume that $G$ is non-hamiltonian.
		To the contrary, suppose $c(G)<\delta+\omega+1$.
		Since $G$ is $2$-connected, by Dirac's theorem, $\omega\ge \delta\ge 2$.
		Let $k=\delta+\omega+1.$
		If $c(G)\leq k-2$, then $G=H_1(n,\omega,\delta)$ or $G=H_2(n,\omega,\delta)$ by {\blue Theorem} \ref{Yuan}.
		Hence, assume that $c(G)=k-1$.
		
		Suppose that $G$ is an edge-maximal graph, that is, $G+xy$ contains a cycle of length at least $k$ for any edge $xy\notin E(G).$ 
		Note that $\delta(G)\ge \delta$ and $\omega(G)\ge \omega.$ 
		Let $\omega(G)=\omega'$ and $\delta(G)=\delta'$. 
		Let $H= K_{\omega'}\subseteq G$ and $T=G[V(G)\setminus V(H)].$ 
		Clearly, there exist two vertices $x\in V(H)$ and $y\in V(T)$ such that $xy\notin E(G).$ Since $G+xy$ contains a cycle of length at least $k$, there is an $(x,y)$-path of length at least $k.$
		
		Define $\mathcal{P}$ as the set of all longest $(H,T)$-paths with nonadjacent endpoints.
		By \Cref{L1}, we need only consider the three types of $\mathcal{P}$:
		(1) $\mathcal{P}$ contains a path such that the degrees of its two endpoints are $\omega-1$ and $\delta+1$ in the path; (2) $\mathcal{P}$ does not contain any path described in (1), but there exists a path in $\mathcal{P}$ whose endpoints have degrees $\omega$ and $\delta$ in the path, respectively; (3) each path in $\mathcal{P}$ has endpoints with degrees $\omega-1$ and $\delta$ in the path.
		
		Note that $G$ is a $2$-connected edge-maximal non-hamiltonian graph with $\delta'\ge \delta$ and $\omega'\ge \omega$, where $\omega\ge \delta\ge2$.  Let $P=v_1v_2\dots v_m \in \mathcal{P}$. Now $m\ge k$. Since $P$ is a longest $(H,T)$-path, we have $V(H)\subseteq V(P).$ Thus, $d_P(v_1)\ge \omega'-1\ge \omega-1$.
		If $(d_P(v_1),d_P(v_m))=(\omega-1,\delta+1)~\text{or}~(\omega,\delta)$, 
		by \Cref{l2.3}, then $G\in\{H_1(n,\omega,\delta+1),H_1(n,\omega+1,\delta),H_2(n,\omega+1,\delta),H_3(n,\omega,2),G_1,G_2,G_3,G_4\}$. Subsequently, our analysis need only focus on $\mathcal{P}$ under the third type.
		Since $N_H(v_1)\subseteq N_P(v_1)$ and $N_G(v_m)=N_P(v_m),$ we have $\omega'=\omega$ and $\delta'=\delta.$
		
		We consider the following two cases.
		
		\begin{case}
			Each path in $\mathcal{P}$ has no crossing pair.
		\end{case}
		
		Now, $P$ has no crossing pair.
		By \Cref{L1} and $c(G)=k-1$, $N_P(v_1)\cap N_P(v_m)\neq \emptyset$ and $|N_P(v_1)\cap N_P(v_m)|=1.$
		Let $N_P(v_1)\cap N_P(v_m)=\{v_\alpha\}$ for some $2\le\alpha\le m-1.$
		Since $G$ is 2-connected, there exits a  $(v_1Pv_{\alpha-1},v_{\alpha+1}Pv_m)$-path $Q$ such that $V(Q)\cap V(P)=\{v_a,v_b\}$ with $2\leq a<\alpha<b\leq m-1.$
		Let $s'=\rm min$$\{h:h>a,v_h\in N_P(v_1)\}$ and let
		$t'=\rm max$$\{h:h<b,v_h\in N_P(v_m)\}.$
		Since there is a cycle $C_0=v_1Pv_aQv_bPv_mv_{t'}Pv_{s'}v_1,$
		we have $$k-1=c(G)\geq |C_0|\geq |N_P[v_1]|+|N_P[v_m]|-1+|Q|-2\geq k-1.$$
		This implies that $Q$ is an edge $v_av_b$, $|C_0|=k-1$ and $V(C_0)=N_P[v_1]\cup N_P[v_m]$. Recall that $P$ has no crossing pair and $N_P(v_1)\cap N_P(v_m)=\{v_\alpha\}$. Then
		\begin{align}\label{a2}
			~N_P[v_1]=V(v_1Pv_a)\cup V(v_{s'}Pv_\alpha) ~\text{and} ~N_P[v_m]=V(v_\alpha Pv_{t'})\cup V(v_bPv_m).
		\end{align}
		
		\begin{claim}\label{c6}
			$N_P[v_\beta]=N_P[v_1]$ for $1\leq \beta\leq a-1$ and $N_P[v_\gamma]=N_P[v_m]$ for $b+1\leq \gamma\leq m.$
		\end{claim}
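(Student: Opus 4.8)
The plan is to exploit the fact that the statement is genuinely two symmetric assertions---one clustered at the $v_1$-end, one at the $v_m$-end---and to prove each by \emph{rotating} $P$ into another member of $\mathcal P$ and reading off the forced degree profile, rather than by a long rerouting case analysis as in \Cref{c1} and \Cref{c2}. Two standing facts drive everything. First, since $P$ is a longest $(H,T)$-path we have $V(H)\subseteq V(P)$, and $d_P(v_1)=\omega-1$ together with the clique $H$ forces $N_P[v_1]=V(H)$; by (\ref{a2}) the block $\{v_1,\dots,v_a\}$ is contained in $V(H)$, so each $v_\beta$ with $\beta\le a-1$ is itself a clique vertex. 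Second, we are in Case 1 under the third type, so \emph{every} path in $\mathcal P$ is free of crossing pairs and has endpoint degrees exactly $\omega-1$ and $\delta$; the recurring point will be that a rotation of $P$ keeps the whole vertex set of $P$ (so it is again a longest path) and has nonadjacent endpoints precisely because $P$ has no crossing pair.

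For $1\le\beta\le a-1$ (the case $\beta=1$ being vacuous), I would use the clique edge $v_1v_{\beta+1}$ to form $P^{\dagger}=v_\beta v_{\beta-1}\cdots v_1 v_{\beta+1}v_{\beta+2}\cdots v_m$. This has the same vertices as $P$, and its endpoints $v_\beta\in V(H)$ and $v_m\in V(T)$ are nonadjacent, since $v_\beta v_m\in E(G)$ would give the crossing pair $(\beta,\alpha)$ of $P$ (as $v_\alpha\in N_P(v_1)$ and $\beta<\alpha$). Hence $P^{\dagger}\in\mathcal P$, so $\{d_{P^{\dagger}}(v_\beta),d_{P^{\dagger}}(v_m)\}=\{\omega-1,\delta\}$; since $d_{P^{\dagger}}(v_m)=d_P(v_m)=\delta$ we get $d_P(v_\beta)=d_{P^{\dagger}}(v_\beta)=\omega-1$. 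But $v_\beta$ is already adjacent to the other $\omega-1$ vertices of $V(H)$, so these exhaust $N_P(v_\beta)$ and $N_P[v_\beta]=V(H)=N_P[v_1]$, with no neighbour analysis needed.

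The $v_m$-end lacks the clique, so there I would pin the degree first and then locate the neighbours. The symmetric rotation $P''=v_1\cdots v_{\gamma-1}v_m v_{m-1}\cdots v_\gamma$ (valid because $v_{\gamma-1}\in N_P(v_m)$) lies in $\mathcal P$ by the same nonadjacency check and yields $d_P(v_\gamma)=\delta=d_P(v_m)$, so $N_P[v_\gamma]$ and $N_P[v_m]$ have the same cardinality $\delta+1$; it then suffices to show $N_P[v_\gamma]\subseteq N_P[v_m]$. Suppose $v_h\in N_P(v_\gamma)\setminus N_P[v_m]$. From $V(C_0)=N_P[v_1]\cup N_P[v_m]$ and $N_P(v_1)\cap N_P(v_m)=\{v_\alpha\}$ one sees $v_h$ lies in $V(H)\setminus\{v_\alpha\}$, or in the gap $\{v_{a+1},\dots,v_{s'-1}\}$, or in the gap $\{v_{t'+1},\dots,v_{b-1}\}$. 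In the first two cases $v_h$ occupies its natural position in the initial stretch of $P''$ and strictly precedes some clique vertex (such a vertex being a neighbour of $v_1$), producing a crossing pair of $P''$---a contradiction; here $v_h=v_\alpha$ is impossible because $v_\alpha\in N_P(v_m)$. In the last case I would splice the chord $v_\gamma v_h$ into $C_0$ to form
\[
v_m P v_\gamma\, v_h P v_{s'}\, v_1 P v_a\, v_b P v_{\gamma-1}\, v_m ,
\]
a cycle covering $V(C_0)$ together with $\{v_{t'+1},\dots,v_h\}$, hence of length $|C_0|+(h-t')\ge k$, contradicting $c(G)=k-1$. Thus $N_P[v_\gamma]\subseteq N_P[v_m]$, and equal cardinality upgrades this to equality.

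I expect the main obstacle to be exactly the bookkeeping that legitimises the rotations and the one residual cycle. Every application needs the rotated path to retain all of $V(P)$ (so it is still longest) and to have nonadjacent endpoints, because only then do the no-crossing-pair property and the third-type degree profile apply; verifying nonadjacency reduces in each case to the absence of crossing pairs in $P$, which is the Case 1 hypothesis. The single position of $v_h$ that the rotation argument does not kill---between $v_{t'}$ and $v_b$---must be handled by the displayed cycle, where the delicate points are confirming that all of its edges are present (the endpoint/clique edges $v_{s'}v_1$ and $v_{\gamma-1}v_m$, the bridge $v_av_b$, and the chord $v_\gamma v_h$) and that its length strictly exceeds $k-1$.
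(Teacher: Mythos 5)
Your proposal is correct. At the $v_m$-end it is essentially the paper's own argument: your $P''$ is exactly the paper's rotation $P_\gamma=v_1Pv_{\gamma-1}v_mPv_\gamma$, the degree pinning $d_P(v_\gamma)=d_G(v_\gamma)=\delta$ is obtained the same way (membership of the rotation in $\mathcal P$ plus the type-(3) degree profile), and your displayed cycle is precisely the paper's $C_\gamma$; the only cosmetic difference is that the paper compresses the elimination of positions before $v_\alpha$ into the phrase ``By (\ref{a2}), $z\in V(v_{t'+1}Pv_{b-1})$'', which silently invokes the same crossing-pair argument for $P_\gamma$ that you spell out explicitly. The genuine difference is at the $v_1$-end. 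The paper gets $N_P[v_1]\subseteq N_P[v_\beta]$ from the clique and then kills a hypothetical extra neighbour $y$ by locating it in $V(v_{a+1}Pv_{s'-1})$ and building the cycle $C_\beta$ of length at least $|C_0|+1=k$; you avoid any cycle construction by noting that once $P^{\dagger}\in\mathcal P$ is certified, the type-(3) hypothesis caps $d_P(v_\beta)=d_{P^{\dagger}}(v_\beta)$ at $\omega-1$, a value that the $\omega-1$ clique neighbours of $v_\beta$ already attain, so $N_P[v_\beta]=V(H)=N_P[v_1]$ by pure counting. This shortcut is sound, and it correctly exploits the asymmetry between the two ends: at $v_\beta$ the forced degree is met by a clique passing through $v_\beta$, so counting closes the argument, whereas at $v_\gamma$ the equality of degrees $d_P(v_\gamma)=d_P(v_m)=\delta$ does not by itself force equal neighbourhoods, which is why the containment argument (and hence the cycle) remains necessary there, as in both your proof and the paper's. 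Your validity checks for the rotations (same vertex set, hence still longest; nonadjacent endpoints via the absence of crossing pairs in $P$) are exactly the obligations required for membership in $\mathcal P$, and you discharge them correctly.
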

		
		\begin{proof}
			For $1\leq \beta\leq a-1$, by (\ref{a2}), $v_1v_{\beta+1}\in E(G)$. Since $N_P[v_1]=V(H)$, we have $v_\beta\in V(H)$ and hence $N_P[v_1]\subseteq N_P[v_\beta]$. Next, we show that  $N_P[v_\beta]\subseteq N_P[v_1]$. Suppose $y\in N_P[v_\beta]\setminus N_P[v_1]$.
			Consider the path $P_\beta=v_\beta Pv_1v_{\beta+1}Pv_m$ for any $\beta$ with $1\leq \beta\leq a-1$.
			Clearly, $P_\beta \in \mathcal P$ and $P_\beta$ has no crossing pair. By (\ref{a2}), $y\in V(v_{a+1}Pv_{s'-1})$. Then $C_\beta=v_\beta P_\beta v_av_bP_\beta v_mv_{t'}P_\beta yv_\beta$ is a cycle such that $|C_\beta|\geq|C_0|+1=k$, a contradiction.
			Thus, $N_P[v_\beta]=N_P[v_1]$ for $2\leq \beta \leq a-1.$
			
			For $b+1\leq \gamma\leq m,$ by (\ref{a2}), $v_{\gamma-1}v_m\in E(G)$. Consider the path $P_\gamma=v_1Pv_{\gamma-1}v_mPv_\gamma$. Clearly, $P_\gamma \in \mathcal P$ and $P_\gamma$ has no crossing pair. Then $d_P(v_\gamma)=d_{P_\gamma}(v_\gamma)=d_G(v_\gamma)=\delta.$ Suppose $z\in N_P[v_\gamma]\setminus N_P[v_m]$. By (\ref{a2}), $z\in V(v_{t'+1}Pv_{b-1})$ and then $C_\gamma=v_1P_\gamma v_av_bP_\gamma v_\gamma zP_\gamma v_s'v_1$ is a cycle such that $|C_\gamma|\geq|C_0|+1=k$, a contradiction. Thus, $N_P[v_\gamma]\subseteq N_P[v_m]$ and hence $N_P[v_\gamma]=N_P[v_m]$ for $b+1\leq \gamma\leq m.$ This proves \Cref{c6}.
		\end{proof}
		
		By the maximality of $P$ and the proof of \Cref{c6}, we have $N_P[v_1]=V(H)$ and $N_G[v_\gamma]=N_P[v_\gamma]$ for $b+1\le \gamma\le m. $
		
		\begin{claim}\label{c7}
			$a<\alpha-1<s'=t'=\alpha<\alpha+1< b$.
		\end{claim}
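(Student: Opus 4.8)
The plan is to argue by contradiction, supposing the displayed chain fails, and in every case to produce one of three forbidden configurations: a cycle of length at least $k$ (impossible since $c(G)=k-1$), an $(H,T)$-path on more than $m$ vertices (impossible since $P$ is longest), or a path lying in $\mathcal{P}$ that admits a crossing pair (impossible under the Case~1 hypothesis). Throughout I use the structure already extracted: $H=N_P[v_1]$ is a clique of order $\omega$; by (\ref{a2}) one has $N_P[v_1]=V(v_1Pv_a)\cup V(v_{s'}Pv_\alpha)$ and $N_P[v_m]=V(v_\alpha Pv_{t'})\cup V(v_bPv_m)$; the connecting path $Q$ is a single edge $v_av_b$ and, by (\ref{a2}), $v_b\in N_P(v_m)$; and since $P$ has no crossing pair, $N_P(v_1)\subseteq\{v_2,\dots,v_\alpha\}$ and $N_P(v_m)\subseteq\{v_\alpha,\dots,v_{m-1}\}$, whence $a<s'\le\alpha\le t'<b$.

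For the left-hand part ($s'=\alpha$ and $a<\alpha-1$) I first suppose $s'<\alpha$, so that $v_{s'},v_{s'+1},\dots,v_\alpha$ all lie in the clique $H$. Form the cycle
\[
D=v_{s'}Pv_{t'}\,v_mPv_b\,v_aPv_{s'},
\]
which uses only edges of $P$, the chord $v_av_b$ and the edge $v_{t'}v_m$, and whose vertex set is $V(v_aPv_{t'})\cup V(v_bPv_m)$. Then reinsert $v_1,\dots,v_{a-1}$ by replacing the edge $v_{s'}v_{s'+1}$ of $D$ with the path $v_{s'}v_1v_2\cdots v_{a-1}v_{s'+1}$; this is legitimate because $v_1v_{s'}$ and $v_{a-1}v_{s'+1}$ are edges of $H$ (here $s'<\alpha$ ensures $v_{s'+1}\in H$). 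The resulting cycle $D'$ satisfies $|D'|=|C_0|+(s'-a-1)$ (with $D'=D$ when $a=1$), so $|D'|\ge k$ as soon as $s'\ge a+2$, a contradiction. Hence $s'=a+1$, forcing $H=\{v_1,\dots,v_\alpha\}$ to be an interval. But then $P^*=v_aPv_1\,v_{a+1}Pv_m$ is an $(H,T)$-path on $m$ vertices with nonadjacent ends (it uses the clique edge $v_1v_{a+1}$), so $P^*\in\mathcal{P}$, and $(\alpha,b)$ is a crossing pair of $P^*$ since $v_\alpha\in N(v_m)$, $v_b\in N(v_a)$ and $\alpha<b$ — contradicting Case~1. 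Therefore $s'=\alpha$, and as $s'=a+1$ has just been excluded we get $a\le\alpha-2$, i.e.\ $a<\alpha-1$; the inequalities $\alpha-1<s'=\alpha<\alpha+1$ are then automatic.

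It remains to establish $t'=\alpha$ and $\alpha+1<b$, and this is where I expect the real difficulty to lie. First, $t'>\alpha$ forces $b>t'\ge\alpha+1$, hence $b\ge\alpha+2$ automatically, so it suffices to rule out $t'>\alpha$ and, when $t'=\alpha$, to rule out $b=\alpha+1$. The obstacle is that the mirror of the previous paragraph is unavailable: $N_P(v_m)=\{v_\alpha,\dots,v_{t'}\}\cup\{v_b,\dots,v_{m-1}\}$ is only a consecutive set and not a clique, so neither the clique reinsertion yielding $D'$ nor the reversal $P^*$ has an analogue on the $v_m$-side. Moreover a structural check shows no long cycle is available: the left-gap vertices $v_{a+1},\dots,v_{\alpha-1}$ form a pendant path attached only at $v_a$ and $v_\alpha$, and once the chord $v_av_b$ is used the right-gap vertices $v_{t'+1},\dots,v_{b-1}$ are attached only at $v_{t'}$, so no cycle can absorb both gaps. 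Consequently the contradiction must again come from a crossing pair. The plan is to reverse the tail of $P$ about a neighbor of $v_m$ (using $v_{\alpha+1}v_m$ when $t'>\alpha$, or the edge $v_av_b$ when $b=\alpha+1$) so as to move a vertex carrying a chord to a low index — most naturally $v_b$, whose chord reaches $v_a$ — to an endpoint of a longest $(H,T)$-path, thereby producing a crossing pair. The delicate, and genuinely hardest, point is to arrange the reversal so that the new path keeps one endpoint in $H$ and one in $T$ with nonadjacent ends while the surviving chord supplies the crossing; the degenerate configurations ($\omega=\delta$, empty right gap $b=t'+1$, and $a=1$) must be disposed of separately.
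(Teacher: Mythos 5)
Your left-hand half is correct: the insertion cycle $D'$ rules out $s'\ge a+2$, and the reversed path $P^*$ with crossing pair $(\alpha,b)$ rules out $s'=a+1$ (and since that second argument only uses $v_{a+1}\in N_P(v_1)$, not $s'<\alpha$, it does legitimately yield $a\le\alpha-2$ after $s'=\alpha$ is established). But the proposal is incomplete: the right-hand half of \Cref{c7}, namely $t'=\alpha$ and $\alpha+1<b$, is never proved. You only describe a plan ("reverse the tail of $P$ about a neighbor of $v_m$\dots") and explicitly flag its key step as unresolved, with several degenerate cases "to be disposed of separately." That half is as much a part of the claim as the other, so this is a genuine gap, not a finished proof.

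Moreover, the reason you give for the difficulty --- that no mirror argument exists because $N_P[v_m]$, unlike $N_P[v_1]$, is not a clique --- is an artifact of your own left-side argument, not of the problem. The ingredients actually needed are symmetric in $v_1$ and $v_m$: equation (\ref{a2}) and \Cref{c6}, whose second half gives $N_P[v_\gamma]=N_P[v_m]$ for $b+1\le\gamma\le m$. This is exactly how the paper argues. On the left: if $s'<\alpha$, then $v_{\alpha-1}v_1\in E(G)$ by (\ref{a2}) and $v_{a-1}v_\alpha\in E(G)$ by \Cref{c6}, so $L=v_aPv_{\alpha-1}v_1Pv_{a-1}v_\alpha Pv_m\in\mathcal P$ carries the crossing pair coming from $v_av_b$ and $v_\alpha v_m$; no clique reinsertion is used. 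Mirroring verbatim: if $t'>\alpha$, then $v_{\alpha+1}v_m\in E(G)$ by (\ref{a2}) and $v_{b+1}v_\alpha\in E(G)$ by \Cref{c6}, and $L'=v_bPv_{\alpha+1}v_mPv_{b+1}v_\alpha Pv_1$ is a longest $(H,T)$-path with nonadjacent ends ($v_b\in V(T)$, $v_1\in V(H)$, $v_bv_1\notin E(G)$) in which $v_\alpha\in N(v_1)$ precedes $v_a\in N(v_b)$ --- a crossing pair, contradicting the Case~1 hypothesis; likewise $b=\alpha+1$ is excluded by $M'=v_bPv_mv_\alpha Pv_1$. So the "genuinely hardest point" you isolate dissolves once the left-side argument is chosen so as not to rely on the clique structure of $H$; had you used the paper's $L$/$M$ construction instead of the cycle $D'$, the same two sentences would have finished both halves.
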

		
		\begin{proof}
			To the contrary, suppose $s'<\alpha.$
			By (\ref{a2}), $v_{\alpha-1}\in N_P(v_1).$
			By \Cref{c6}, $v_{a-1}v_\alpha\in E(G)$.
			There is a path $L=v_aPv_{\alpha-1}v_1Pv_{a-1}v_\alpha Pv_m \in \mathcal P$.
			Since $v_av_b,v_\alpha v_m\in E(G)$, $L$ has a crossing pair, a contradiction.
			Thus, $s'=\alpha.$ Similarly, $t'=\alpha.$
			
			To the contrary, suppose $a=\alpha-1$, i.e., $v_av_\alpha\in E(P).$
			By $v_{\alpha}\in V(H) $,  there is a path $M=v_aPv_1v_\alpha Pv_m\in \mathcal P.$
			Since $v_av_b, v_\alpha v_m\in E(G)$, $M$ has a crossing pair, a contradiction.
			Thus, $a<\alpha-1$. Similarly, $\alpha+1<b.$
			This proves \Cref{c7}.
		\end{proof}
		
		We define the following sets.
		\begin{align}
			A_1&=V(v_1Pv_{a-1}), B_1=V(v_{b+1}Pv_m), C_1=\{v_a,v_\alpha,v_b\}, \notag\\
			D_1&=V(v_{a+1}Pv_{\alpha-1}),D_2=V(v_{\alpha+1}Pv_{b-1}),X=V(G)\setminus V(P),\notag\\
			X_1&=\{x\in X~|~x ~\text{is only connected to} ~D_1\cup\{v_a,v_\alpha\}\subseteq V(P)\} ~\text{and} \notag\\
			X_2&=\{x\in X~|~x ~\text{is only connected to} ~D_2\cup \{v_\alpha,v_b\}\subseteq V(P)\}.\notag
		\end{align}
		Clearly, $X_1\ne\emptyset$, $X_2\ne\emptyset$ and $X_1\cap X_2=\emptyset$. By \Cref{c7}, $D_1\neq \emptyset$ and $D_2\neq \emptyset$.
		Note that $N_P[D_1]\subseteq D_1\cup \{v_a,v_\alpha\}$ and $N_P[D_2]\subseteq D_2\cup \{v_\alpha,v_b\}$.
		Recall that $C_0=v_1Pv_av_bPv_mv_\alpha v_1$ with $|C_0|=k-1.$
		Since $G$ is 2-connected, if $[X_1,X_2]\ne\emptyset$, then there exists a cycle of length at least $k$ containing all vertices of $C_0$ and some vertices of $X_1\cup X_2$, a contradiction. Thus, $[X_1,X_2]=\emptyset$.
		
		\begin{claim}\label{c8}
			$X=X_1\cup X_2$.
		\end{claim}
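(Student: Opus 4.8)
The plan is to show that every $z\in X$ has all of its attachments to $P$ confined to one of the two arms $W_1:=D_1\cup\{v_a,v_\alpha\}$ or $W_2:=D_2\cup\{v_\alpha,v_b\}$, which is exactly the assertion $z\in X_1\cup X_2$ (writing $\Phi(z)$ for the set of vertices of $V(P)$ to which $z$ is connected, $z\in X_1\Leftrightarrow\Phi(z)\subseteq W_1$ and $z\in X_2\Leftrightarrow\Phi(z)\subseteq W_2$). Since $z\notin V(P)$, 2-connectivity of $G$ furnishes two internally disjoint $(z,P)$-paths with distinct endpoints $x_1=v_i$, $x_2=v_j$ ($i<j$) internal to $X$, and $z$ may be joined by such disjoint paths to any two of its attachments. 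So it suffices to prove that every such pair $\{v_i,v_j\}$ lies in $W_1$ or in $W_2$, and then assemble: if one pair sits in $W_1$ while $z$ is also attached to a vertex of $W_2\setminus W_1=D_2\cup\{v_b\}$ or of $A_1\cup B_1$, a second disjoint pair would contradict this. Throughout I use two contradiction engines: a cycle of length $\ge k$ (against $c(G)=k-1$) and an $(H,T)$-path longer than $P$ (against the maximality of $P$). The structure at hand is: by (\ref{a2}) and \Cref{c7}, $V(H)=\{v_1,\dots,v_a,v_\alpha\}$ is an $\omega$-clique; by \Cref{c6} the set $\{v_b,\dots,v_m\}$ is a clique each of whose vertices is adjacent to $v_\alpha$, with $v_mv_\alpha\in E(G)$; $v_av_b$ is a chord of $P$; $v_1v_\alpha\in E(G)$; and $N_P[D_1]\subseteq D_1\cup\{v_a,v_\alpha\}$, $N_P[D_2]\subseteq D_2\cup\{v_\alpha,v_b\}$.

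For the straddle case $v_i\in\{v_a\}\cup D_1$ and $v_j\in D_2\cup\{v_b\}$ (so $a\le i<\alpha<j\le b$), I close through $z$ along $C':=v_1Pv_i\,z\,v_jPv_m\,v_\alpha v_1$. This is a genuine cycle, since the blocks $\{v_1,\dots,v_i\}$, $\{v_j,\dots,v_m\}$ and $\{v_\alpha\}$ are pairwise disjoint (as $i<\alpha<j$), and $|C'|=i+(m-j)+3$, whence $|C'|-|C_0|=(i-a)+(b-j)+1\ge 1$, i.e. $|C'|\ge k$ — a contradiction.

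If instead $v_i\in A_1$ (the case $v_j\in B_1$ being symmetric, via the clique $\{v_b,\dots,v_m\}$ and the edge $v_1v_\alpha$), I build a $(v_i,v_j)$-path $P^\ast\subseteq G[V(P)]$ with $V(P^\ast)\supseteq V(C_0)$ and close it through $z$ to obtain a cycle of length $\ge|C_0|+1=k$. Such a $P^\ast$ runs through the whole left clique $\{v_1,\dots,v_a\}$ (e.g. $v_i v_{i-1}\cdots v_1 v_{i+1}v_{i+2}\cdots v_a$, a Hamiltonian path of the clique from $v_i$ to $v_a$ absorbing $v_{i+1},\dots,v_{a-1}$), crosses to the right side via the chord $v_av_b$, sweeps $\{v_b,\dots,v_m\}$, and returns to $v_\alpha$ along $v_mv_\alpha$ — optionally detouring through $D_1$ or $D_2$ to land at the prescribed $v_j$; in every position of $v_j$ this covers all of $V(C_0)$ (often all of $V(P)$). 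The sole exception is when $v_i,v_j$ both lie in the clique block $\{v_1,\dots,v_a\}$ (resp. $\{v_b,\dots,v_m\}$): then no covering $(v_i,v_j)$-path is available, but the clique supplies an edge $v_i^-v_j^-$ or $v_i^+v_j^+$, and the rotation of \Cref{c7'} yields an $(H,T)$-path through $z$ covering $V(P)\cup\{z\}$, again a contradiction. Combining the three cases, every disjoint-path pair of attachments of $z$ lies in a common arm, so $\Phi(z)\subseteq W_1$ or $\Phi(z)\subseteq W_2$, i.e. $z\in X_1\cup X_2$.

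The main obstacle is bookkeeping rather than ideas: one must verify that each rerouted walk is a valid path or cycle of the asserted length across all positions of $v_i,v_j$ and at the boundary vertices $v_1,v_a,v_\alpha,v_b,v_m$ (where a predecessor or successor may fall outside a clique, or an arm may be a single vertex), and one must justify the upgrade from ``every disjoint-path pair lies in one arm'' to ``$\Phi(z)$ lies in one arm'' using that $z$ reaches any two of its attachments by internally disjoint $X$-paths. I expect the same-clique-block subcase and the degenerate endpoints $i=1$ and $j=m$ to require the most careful separate treatment.
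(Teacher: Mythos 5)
Your case analysis itself — a straddling pair of attachment points yields a cycle of length at least $k$ closed through $v_m v_\alpha v_1$; a pair meeting $A_1$ yields either a long covering cycle or, when both endpoints lie in the left clique, a rotation to a longer $(H,T)$-path — is exactly the paper's engine for \Cref{c8}, and those constructions are sound (your length count $|C'|-|C_0|=(i-a)+(b-j)+1\ge 1$ matches the paper's). The genuine gap is the statement your assembly step leans on: that ``$z$ may be joined by such disjoint paths to any two of its attachments.'' This is false in a $2$-connected graph. Concretely, suppose the component $K$ of $G-V(P)$ containing $z$ is $\{z,c\}$ with $zc\in E(G)$, $N_P(z)=\{v_\alpha\}$, and $N_P(c)=\{u,w\}$ with $u\in D_1$, $w\in D_2$. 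Then $G$ can be $2$-connected and $\Phi(z)=\{v_\alpha,u,w\}$, so $z\notin X_1\cup X_2$; but every $(z,u)$-path and every $(z,w)$-path passes through $c$, so no pair of internally disjoint $(z,P)$-paths has endpoint set $\{u,w\}$. Every fan from $z$ has endpoints $\{v_\alpha,u\}\subseteq W_1$ or $\{v_\alpha,w\}\subseteq W_2$, so your statement ``every disjoint-path pair lies in one arm'' holds, the mixed second pair you invoke never exists, and your proof terminates without a contradiction even though $X=X_1\cup X_2$ is being violated. The configurations your upgrade must exclude are precisely those where the two arms are reached only through a bottleneck vertex of $K$ other than $z$, and there the contradicting cycle necessarily avoids $z$ — which a framework routing every construction through $z$ cannot produce.

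The repair is small but essential: what matters is not a fan from $z$ but the attachment set of the whole component $K$. Any two attachments $u,w$ of $K$ are joined by a path whose interior lies in $K$, and all of your constructions use the two fan paths only as such a $(u,w)$-path with interior off $P$; they never need to pass through $z$. Running the case analysis over all attachment pairs of $K$ joined through $K$ (equivalently, applying the fan argument to a well-chosen vertex of $K$ — in the example above, to $c$, from which the straddling fan does exist and gives the cycle $v_1Pu\,c\,wPv_mv_\alpha v_1$ of length at least $k$) closes the gap. For comparison, the paper is terse at the same spot: it fixes an arbitrary fan from $x\in X\setminus(X_1\cup X_2)$, rules out endpoints in $B_1$ (those vertices have no neighbours off $P$, by \Cref{c6}) and in $A_1$, and then asserts that the remaining endpoints straddle, i.e.\ $y_1\in D_1\cup\{v_a\}$ and $y_2\in D_2\cup\{v_b\}$; that assertion, too, is only valid once the witness vertex and fan are chosen suitably, but the paper's conclusion does not rest on prescribing both endpoints of a fan from one fixed vertex, which is what your argument requires and what is impossible in general.
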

		
		\begin{proof}
			To the contrary, suppose $x\in X\setminus (X_1\cup X_2)$.
			Since $G$ is 2-connected, there are two vertex-disjoint $(x,P)$-paths $P_1$ and $P_2$ with $P_j\cap V(P)=\{y_i\}$ for $i=1,2$. Assume that $y_1\in V(v_1Py_2).$ By \Cref{c6}, $N_G(B_1)\subseteq B_1\cup C_1$ and hence $\{y_1,y_2\}\cap B_1=\emptyset.$
			Since $A_1\cup \{v_a,v_\alpha\}$ is a clique, we have $y_1\notin A_1$ or $y_2\notin A_1$.
			Otherwise, there is an $(H,T)$-path longer than $P,$ a contradiction. Suppose $y_1\in A_1$ and $y_2\notin A_1$. Without loss of generality, let $y_1=v_1$. Other situations are similar.
			If $y_2\in D_1\cup  \{v_a\}$, by \Cref{c6}, then $v_\alpha v_{a-1}\in E(G)$ and then $v_1P_1xP_2y_2Pv_av_bPv_mv_\alpha v_{a-1}Pv_1$ is a cycle of length at least $ k.$  If $y_2\in D_2\cup  \{v_b\}$, then $v_1P_1xP_2y_2Pv_mv_\alpha Pv_1$ is a cycle of length at least $k$.
			If $y_2=v_\alpha$, then $v_1Pv_av_bPv_mv_\alpha P_2xP_1 v_1$ is a $k$-cycle. Both situations lead to a contradiction.
			
			Hence, $y_1,y_2\notin A_1$. By $x\in X\setminus (X_1\cup X_2)$ and $y_1\in V(v_1Py_2)$, we have $y_1\in D_1\cup \{v_a\}$ and $y_2\in D_2\cup \{v_b\}.$
			Now $v_1Py_1P_1xP_2y_2Pv_mv_\alpha v_1$ is a cycle of length at least $ k$, a contradiction. Thus, $X=X_1\cup X_2$. This proves \Cref{c8}.
		\end{proof}
		
		By $c(G)=k-1,$ we have $|D_1|\le \min\{|A_1|,|B_1|+1\}=\min\{\omega-2,\delta\}$, that is,
		\begin{align}\label{d1}
			\text{there exists no} ~(v_a,v_\alpha)\text{-path of order more than} \min\{\omega-2,\delta\}+2.
		\end{align}
		Similarly, we have $ |D_2|\le \min\{|A_1|+1,|B_1|\}=\min\{\omega-1,\delta-1\}=\delta-1,$ that is,
		\begin{align}\label{d2}
			\text{there exists no} ~(v_\alpha,v_b)\text{-path of order more than} ~\delta+1.
		\end{align}

		Note that both $(G[X_1\cup D_1\cup\{v_a,v_\alpha\}],v_a,v_\alpha)$ and $(G[X_2\cup D_2\cup \{v_\alpha,v_b\}],v_\alpha,v_b)$ are $2$-connected rooted graphs with minimum degree at least $\delta.$ By $\delta(G)=\delta$, $|X_i\cup D_i|\geq \delta-1$ for $i=1,2.$
		
		First, we assert that $G[X_2\cup D_2]$ consists of some $(\delta-1)$-cliques.
		If $|X_2\cup D_2|=\delta-1$, then $X_2\cup D_2$ is a $(\delta-1)$-clique.
		Assume that $|X_2\cup D_2|\ge \delta.$
		If $|X_2\cup D_2|=\delta$, it is easy to verify that there is a $(v_\alpha,v_b)$-path of order more than $\delta+1$, which contradicts (\ref{d2}).
		Hence, $|X_2\cup D_2|\ge \delta+1$.
		By \Cref{L2.2} and (\ref{d2}), $G[X_2\cup D_2]$ consists of some components of order $\delta-1$, and each component is $(\delta-1)$-clique since $\delta(G)=\delta.$  Thus, $G[X_2\cup D_2]$ consists of some $(\delta-1)$-cliques.
		
		Next, we analyzes the structure of $G[X_1\cup D_1]$. Recall that $\omega'=\omega$ and $\delta'=\delta$.
		We assert that 
		$\omega>\delta.$ Otherwise, suppose $\omega=\delta.$ By \Cref{c6} and \Cref{c8},  $A_1=\emptyset$. Then $\omega=\delta=2$. Now $\{v_a,v_\alpha,v_b\}$ is a $3$-clique, contradicting $\omega'=\omega=2$.
		Hence, $\omega\ge \delta+1$.
		
		Suppose $\omega= \delta+1$.
		If $|X_1\cup D_1|=\delta-1$, then $X_1\cup D_1$ is a $(\delta-1)$-clique.  If $|X_2\cup D_2|=\delta$, it is easy to verify that there is a $(v_a,v_\alpha)$-path of order more than $\delta+1$, which contradicts (\ref{d2}).
		If $|X_1\cup D_1|\ge \delta+1$, by \Cref{L2.2} and (\ref{d1}), $G[X_1\cup D_1]$ consists of some components of order $\delta-1$ and each component is $(\delta-1)$-clique since $\delta(G)=\delta.$
		
		Suppose $\omega\ge\delta+2.$
		If $|X_1\cup D_1| \le \delta,$ by the maximality of $G$, then $X_1\cup D_1$ is a $(\delta-1)$-clique or $\delta$-clique.
		If $|X_1\cup D_1|\ge \delta+1$, by \Cref{L2.2}, then $G[X_1\cup D_1]=L(\delta)\cup T(\delta)$ or $\delta =3$ and $G[X_1\cup D_1]=S\cup L(3)\cup T(3).$ 
		
		By the maximality of $G$, if $\omega=\delta+1,$ then $G=G_3$ where $l_1\ge1$, $l_3\ge 2$ and $l_2=|S|=|L(\delta)|=0$; and if $\omega\ge\delta+2,$ then $G=G_3$ where $l_1+l_2\ge1$, $|S|\ge0$, $|L(\delta)|\ge 0$ and $l_3\ge 2.$

		\begin{case}
			There exists a path in $\mathcal P$ having a crossing pair.
		\end{case}

		Let $P=v_1v_2\dots v_m$ be a path in $\mathcal{P}$ that has a crossing pair where $m\ge k$.
		Let $s=\min\{i:v_iv_m\in E(G)\}$ and $t=\max\{j:v_jv_1\in E(G)\}.$ Clearly, $s\ge 2$, $t\le m-1$ and $s\le t-2.$
		Let $(p,q)$ be a minimum crossing pair. There is a cycle $\widetilde{C}=v_1Pv_pv_mPv_qv_1$.
		Note that $N_P^-(v_1)\cap N_P[v_m]=\emptyset$ and $N_P[v_1]\cap N_P^+(v_m)=\emptyset$. We have
		$$k-2= |(N_P^-(v_1)\cup N_P[v_m])\setminus \{v_{q-1}\}|\leq |\widetilde{C}|\leq c(G)= k-1.$$
		We consider the following subcases.
		
		\vskip 3mm
		\textbf{Subcase~2.1.} $|\widetilde{C}|=k-2.$
		\vskip 3mm
		Since $N_P^-(v_1)\cap N_P[v_m]=\emptyset$ and $N_P[v_1]\cap N_P^+(v_m)=\emptyset$, we have
		\begin{align}\label{a3}
			V(\widetilde{C})=(N_P^-(v_1)\cup N_P[v_m])\setminus \{v_{q-1}\}=(N_P[v_1]\cup N_P^+(v_m))\setminus \{v_{p+1}\}.
		\end{align}
		Similarly, we have
		\begin{align}\label{f1}
			N_P[v_i]=N_P[v_1] ~\text{for} ~1\leq i\leq s-1 ~\text{and}~N_P[v_j]=N_P[v_m] ~\text{for}~ t+1\leq j\leq m.
		\end{align}

		\begin{claim}\label{c10}
			$(s,t)=(p,q)$.
		\end{claim}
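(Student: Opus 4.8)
The plan is to first record the trivial inclusions and then rule out the two possible strict inequalities. Since $v_p\in N_P(v_m)$ and $s=\min\{i:v_iv_m\in E(G)\}$, we have $s\le p$; since $v_q\in N_P(v_1)$ and $t=\max\{j:v_jv_1\in E(G)\}$, we have $q\le t$; together with $p<q$ this gives $s\le p<q\le t$. So it remains to show $s=p$ and $t=q$, i.e.\ to exclude $s<p$ and $t>q$. Reversing $P$ and interchanging the roles of the two ends $v_1$ and $v_m$ swaps the two displayed descriptions of $V(\widetilde C)$ in (\ref{a3}) as well as the two halves of (\ref{f1}), so the two cases are mirror images and I would treat only $t>q$ in detail.

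The key reduction is to convert the inequality $t>q$ into a statement about a \emph{common neighbour} of the two ends. Suppose $t>q$. By the maximality of $t$ we have $v_{t+1}\notin N_P(v_1)$, hence $v_t\notin N_P^-(v_1)$; moreover $t\ge q$ forces $v_t\ne v_{q-1}$ and $v_t\in\{v_q,\dots,v_m\}\subseteq V(\widetilde C)$. Plugging this into the first description $V(\widetilde C)=(N_P^-(v_1)\cup N_P[v_m])\setminus\{v_{q-1}\}$ of (\ref{a3}) leaves no alternative but $v_t\in N_P[v_m]$, and since $v_t\ne v_m$ this means $v_t\in N_P(v_1)\cap N_P(v_m)$, a common neighbour lying strictly to the right of $v_q$. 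Alongside this, (\ref{f1}) gives $N_P[v_j]=N_P[v_m]$ for every $j\ge t+1$, so $\{v_{t+1},\dots,v_m\}$ is a clique contained in $N_P[v_m]$, each of whose vertices is adjacent to \emph{every} neighbour of $v_m$; symmetrically $\{v_1,\dots,v_{s-1}\}$ sits inside the clique $H=N_P[v_1]$. This common-neighbour edge, together with the two end-cliques, is the structural data I would exploit.

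Finally I would extract a contradiction from the extra edge $v_tv_m$ (together with $v_tv_1$) and the clique structure. The natural move is to reflect $P$ at the common neighbour, passing to the path $P'=v_1Pv_tv_mPv_{t+1}$; its far endpoint $v_{t+1}$ lies in $T$ because $v_{t+1}\in N_P^+(v_m)$ while $N_P[v_1]\cap N_P^+(v_m)=\emptyset$ (the disjointness read off from the second description in (\ref{a3})), and $v_1v_{t+1}\notin E(G)$ for the same reason, so $P'\in\mathcal P$. The goal is then to show that $P'$, unlike $P$, admits a crossing pair or a pair of non-adjacent ends with a common neighbour that, through \Cref{L1}, forces a cycle of length at least $k=\omega+\delta+1$, contradicting $c(G)=k-1$; equivalently, that it produces a crossing pair of gap smaller than $q-p$, contradicting the minimality of $(p,q)$. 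Combining the $t>q$ case with its mirror $s<p$ then yields $(s,t)=(p,q)$. I expect the hard part to be precisely this last step: the ``obvious'' crossing‑pair cycles built from $(s,\cdot)$ and $(\cdot,t)$ all have length at most $|\widetilde C|=k-2$, so the required gain in length must come from genuinely routing through the common‑neighbour edge $v_tv_m$ and the clique $\{v_{t+1},\dots,v_m\}$, and the bookkeeping needed to certify that the rerouted path still lies in $\mathcal P$ and that its crossing structure strictly improves (so that we do not simply land back in the third type) is where the real care is needed.
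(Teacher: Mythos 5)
Your opening moves are correct and in fact coincide with the paper's: the inclusions $s\le p<q\le t$, the reduction by symmetry to $q<t$, and the key observation that the maximality of $t$ plus the first description in (\ref{a3}) force $v_t\in N_P(v_1)\cap N_P(v_m)$. But from that point on you have a genuine gap: you never derive a contradiction. You only state the goal ("show that $P'$ admits a crossing pair \dots that forces a cycle of length at least $k$") and explicitly defer the bookkeeping, which is precisely the substance of the claim. Moreover, the reflected path $P'=v_1Pv_tv_mPv_{t+1}$ does not by itself buy anything: since $N_P[v_{t+1}]=N_P[v_m]$ by (\ref{f1}), the endpoint degrees of $P'$ are again $(\omega-1,\delta)$, so $P'$ is just another type-(3) path in $\mathcal P$, and nothing forces its crossing structure to be better than that of $P$; completing your route would require exactly the consecutive-neighbour analysis you have skipped.

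The missing argument (the paper's actual finish) stays on $P$ and needs no reflection. First, (\ref{f1}) rules out two consecutive neighbours of either end on $v_sPv_t$: if $v_f,v_{f+1}\in N_P(v_m)$, then $v_{t+1}v_{f+1}\in E(G)$ because $N_P[v_{t+1}]=N_P[v_m]$, and $v_1Pv_fv_mPv_{t+1}v_{f+1}Pv_tv_1$ is an $m$-cycle with $m\ge k$, a contradiction; the argument for $v_1$ is symmetric via $N_P[v_{s-1}]=N_P[v_1]$. Since $v_t$ is adjacent to both ends, this yields $v_{t-1}\notin N_P(v_1)\cup N_P(v_m)$, hence $q\le t-2$; then $v_{t-1}\in V(\widetilde{C})$ together with the second description in (\ref{a3}) forces $v_{t-1}\in N_P^+(v_m)$, i.e.\ $v_{t-2}v_m\in E(G)$. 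Thus $(t-2,t)$ is a minimal crossing pair of gap $2$, the smallest possible, so the minimum crossing pair satisfies $q-p=2$ and $|\widetilde{C}|=m-(q-p-1)=m-1\ge k-1$, contradicting the Subcase 2.1 hypothesis $|\widetilde{C}|=k-2$. Note also that your closing intuition ("a crossing pair of gap smaller than $q-p$, contradicting the minimality of $(p,q)$") is aimed slightly wrong: finding a small-gap pair does not contradict minimality of $(p,q)$ directly; rather, it pins down $q-p=2$ and thereby makes $\widetilde{C}$ too long. Making this precise is exactly the step your proposal leaves open.
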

		
		\begin{proof}
			To the contrary, suppose $(s,t)\neq (p,q)$.  Without loss of generality, let $q<t.$ Then $v_{t-1},v_t\in V(\widetilde{C})$. We show that $v_m$ is not adjacent to any two consecutive vertices of $V(v_sPv_t)$.
			Otherwise, suppose  $\{v_f,v_{f+1}\}\subseteq N_P(v_m)$. By (\ref{f1}), $v_{t+1}v_{f+1}\in E(G)$ and hence $v_1Pv_fv_mPv_{t+1}v_{f+1}Pv_tv_1$ is an $m$-cycle, a contradiction. Similarly, $v_1$ is not adjacent to any two consecutive vertices of $V(v_sPv_t)$.
			
			By $v_1v_t,v_mv_t\in E(G)$, $v_{t-1}\notin N_P(v_1)\cup N_P(v_m)$. Hence, $q\le t-2,$ that is, $v_{t-2}\in V(\widetilde{C}).$
			By (\ref{a3}), $v_{t-1}\in N_P^+(v_m)$ and hence $v_{t-2}\in N_P(v_m).$ Now $(t-2,t)$ is a minimal crossing pair.  $v_1Pv_{t-2}v_mPv_tv_1$ is a cycle of length $m-1>k-2$, a contradiction.
			This proves \Cref{c10}.
		\end{proof}

		We define the following sets.
		\begin{align}
			A_1&=V(v_1Pv_{s-1}), A_2=\{v_s,v_t\}, A_3=V(v_{t+1}Pv_m) ~\text{and}~X=V(G)\setminus A_1\cup A_2\cup A_3.\notag
		\end{align}
		By (\ref{f1}) and \Cref{c10}, $G[A_1\cup A_2]=K_\omega$, $G[A_2\cup A_3]=K_{\delta+1}$. Similarly, we have
		\begin{align}\label{f2}
			~\text{each vertex of}~ X ~\text{is only connected to}~ A_2\subseteq A_1\cup A_2\cup A_3.
		\end{align}

		By $c(G)=k-1,$  $|X|\ge\delta$.
		If $|X|=\delta$, then $G[X]$ is a component of order $\delta.$  By the maximality of $G$, $X\cup A_2$ is a $(\delta+2)$-clique. Let $u_1P_1u_{\delta}$ be a Hamilton path in $G[X]$ such that $u_1v_s, u_\delta v_t\in E(G)$. Now there is a path $P'=v_1Pv_sv_mPv_tu_\delta P_1u_1 \in \mathcal P$ with $d_{P'}(v_1)=\omega-1$ and $d_{P'}(u_1)=\delta+1$ (type (1)), a contradiction.

		Hence, $|X|\ge \delta+1.$ By \Cref{c10} and (\ref{f2}), $(G[\{v_s,v_t\}\cup X],v_s,v_t)$ is a 2-connected rooted graph of order at least $\delta+3$ with minimum degree at least $\delta$.
		By \Cref{L2.2} and $c(G)=k-1$, $X=L(\delta)\cup T(\delta)$ or $\delta=3$ and $X=S\cup L(3)\cup T(3)$.
		
		If the former case holds, by $c(G)=k-1$, either $L(\delta)\ne \emptyset$ or $T(\delta)$ contains a component of order $\delta$. Similar to the above argument, there is always an $(H,T)$-path of type (1).        
		Hence, the latter case holds. Clearly, $S$ contains exactly one star, $L(\delta)=\emptyset$ and $T(\delta)$ consists of components of $\delta-1$.
		
		Thus, $G=G_2$ where $l_1\ge1$, $l_2=0$, $|L(\delta)|=0$ and $|S|> 0$.

		\vskip 3mm
		
		\textbf{Subcase~2.2.} $|\widetilde{C}|=k-1$.
		\vskip 3mm
		Now $|V(\widetilde{C})\setminus (N_P^-(v_1)\cup N_P[v_m])|=1$, similarly, $|V(\widetilde{C})\setminus (N_P[v_1]\cup N_P^+(v_m))|=1$.
		Let $v_h\in V(\widetilde{C})\setminus (N_P^-(v_1)\cup N_P[v_m])$. Then,
		\begin{align}\label{a4}
			\text{each vertex in} ~V(\widetilde{C})\setminus \{v_h\} ~\text{belongs to} ~N_P^-(v_1) ~\text{or} ~N_P[v_m].
		\end{align}
		Let $v_{h'}\in V(\widetilde{C})\setminus (N_P[v_1]\cup N_P^+(v_m))$. Then,
		\begin{align}\label{a5}
			\text{each vertex in} ~V(\widetilde{C})\setminus \{v_{h'}\} ~\text{belongs to} ~N_P[v_1] ~\text{or} ~N_P^+(v_m).
		\end{align}
		Recall that $(p,q)$ is a minimum crossing pair and $\widetilde{C}=v_1Pv_pv_mPv_qv_1$.
		Clearly, $h\neq p$ and $h'\neq q$. We assert that $h'=h+1$.
		Otherwise, suppose $h'\neq h+1$. By $v_h\notin N_P^-(v_1)$, $v_{h+1}\notin N_P[v_1]$. Since $h\ne p$, $v_{h+1}\in V(\widetilde{C})$. By (\ref{a5}) and $h'\ne h+1$, $v_{h+1}\in N_P^+(v_m)$ and then $v_h\in N_P(v_m)$, a contradiction.
		Hence, $h'=h+1$.
		By $v_h\notin N_P^-(v_1)\cup N_P[v_m]$, $h\notin \{1,s\}$, similarly, $h'\notin \{t,m\}$, i.e., $h\notin \{t-1,m-1\}$. Thus, $h\notin \{1,s,p,t-1,m-1\}$.
		
		We need to consider the following three cases.
		
		\vskip 3mm
		{\bf Case A.} $2\le h<h+1\le s$.
		\vskip 3mm

		We assert that $h=s-2$. Otherwise, suppose $h\le s-3$. By (\ref{a4}), $V(v_1Pv_s)\setminus \{v_{h+1}\}\subseteq N_P[v_1]$. Hence, $v_{h+3}v_1\in E(G)$ and then there is a path $P'=v_{h+2}Pv_1v_{h+3}Pv_m\in \mathcal{P}$ with $d_{P'}(v_{h+2})\ge \omega$ and $d_{P'}(v_m)=\delta,$ a contradiction. Suppose $h=s-1$. By $v_h\in V(H),$ there is a path $P''=v_hPv_1v_tPv_sv_mPv_{t+1}\in\mathcal{P}$ with $d_{P''}(v_{h})\ge \omega$ and $d_{P''}(v_{t+1})=\delta,$ a contradiction.
		Hence, $h=s-2$ which implies that $s\ge 4.$
		
		\begin{claim}\label{c12}
			$(\rm i)$ For any vertex  $v_i\in V(v_1Pv_{s-3}),$ $N_P[v_i]=N_P[v_1]$. Moreover, $v_1$ is not adjacent to any two consecutive vertices of $V(v_sPv_t).$
			
			$(\rm ii)$ For any vertex $v_j\in  V(v_{t+1}Pv_{m})$, $N_P[v_j]=N_P[v_m]$. Moreover, $v_m$ is not adjacent to any two consecutive vertices of $V(v_sPv_t).$
		\end{claim}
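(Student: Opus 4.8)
The plan is to prove \Cref{c12} by the mechanism already used for the ``front/back stabilization'' claims in this proof: exploit the two forced cocliques encoded in (\ref{a4}) and (\ref{a5}) on $\widetilde{C}$, together with the tight identity $|\widetilde{C}|=k-1=c(G)$. I would prove part (i) in full and obtain part (ii) by the symmetric argument, interchanging the roles of $v_1,V(H)$ with $v_m,N_P(v_m)$, replacing (\ref{a5}) by (\ref{a4}), and using $N_G(v_m)=N_P(v_m)$ with $d_P(v_m)=\delta$.

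For (i), I would first pin down the front segment. Since $s\le p$, the subpath $v_1Pv_s$ lies entirely on $\widetilde{C}$, and because $N_P(v_m)$ has least index $s$, no vertex $v_i$ with $i\le s$ can lie in $N_P^+(v_m)$. As we are in Case A with $h'=h+1=s-1$, equation (\ref{a5}) then forces every vertex of $V(v_1Pv_s)\setminus\{v_{s-1}\}$ into $N_P[v_1]=V(H)$; in particular $v_1,\dots,v_{s-2},v_s\in V(H)$. For $i\le s-3$ both $v_i$ and $v_{i+1}$ then lie in the clique $V(H)$, which gives $N_P[v_1]=V(H)\subseteq N_P[v_i]$ and, crucially, makes the reroute $v_iPv_1v_{i+1}Pv_m$ an admissible $(H,T)$-path. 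The reverse inclusion $N_P[v_i]\subseteq N_P[v_1]$ is then argued by contradiction: assuming a neighbour $v_y\notin V(H)$ of $v_i$, a case split on the location of $v_y$ (on $\widetilde{C}$, elsewhere on $P$, or in $X$) combined with the reroute and the crossing-pair cycle $\widetilde{C}$ produces either a cycle of length at least $k$, contradicting $c(G)=k-1$, or an $(H,T)$-path whose endpoint degrees realise type (1) or (2), which is excluded in the third-type regime; this is exactly the reasoning of \Cref{c1}. For the non-consecutiveness clause I would assume $v_j,v_{j+1}\in N_P(v_1)$ with $s\le j<j+1\le t$, note that both then lie in $V(H)$, and splice $\widetilde{C}$ using $v_s\in N_P(v_m)\cap V(H)$ and a clique chord such as $v_{s-2}v_{j+1}$ to build a long cycle, in the spirit of \Cref{c4} and \Cref{cc5}.

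Part (ii) is cleaner: in Case A both special vertices $v_h,v_{h'}$ sit in the front, so the tail $v_{t+1}Pv_m$ carries no exceptional vertex. Here (\ref{a4}) gives $V(v_{t+1}Pv_m)\subseteq N_P[v_m]$, and the reroute $v_1Pv_{j-1}v_mPv_j\in\mathcal P$ together with $d_G(v_j)=d_P(v_j)=\delta$ forces $N_P[v_j]=N_P[v_m]$ for every $t+1\le j\le m$; the fact that $v_m$ is adjacent to no two consecutive vertices of $v_sPv_t$ follows verbatim as in \Cref{c1}.

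The main obstacle I anticipate is the bookkeeping in the reverse-inclusion and consecutive-neighbour steps. Because in Subcase~2.2 the minimum crossing pair $(p,q)$ need not coincide with $(s,t)$, each rerouting must be chosen according to the position of the hypothetical neighbour $v_y$ relative to $(p,q)$ to guarantee either length at least $k$ or the precise type (1)/(2) endpoint profile; moreover one must verify that the rerouted endpoints are \emph{non}-adjacent so that the path genuinely lies in $\mathcal P$ and thus contradicts the third-type assumption. The subtlest point is the special vertex $v_{s-1}=v_{h'}$: since $v_{s-1}\notin V(H)$, the natural splice omits it and yields only an $(m-1)$-cycle, so the borderline situation $m=k$ has to be handled separately, and the boundary indices $j\in\{s,s+1\}$ in the consecutive-neighbour argument must be ruled out first using $c(G)=k-1$.
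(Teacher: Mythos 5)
Your pinning of the front segment and your proof of $N_P[v_i]=N_P[v_1]$ for $i\le s-3$ (the reroute $v_iPv_1v_{i+1}Pv_m\in\mathcal P$, the type-(3) degree cap, and the clique inclusion $N_P[v_1]\subseteq N_P[v_i]$) is exactly the paper's argument. The genuine gap is in the ``moreover'' clause of (i). The splice you propose, $v_1Pv_{s-2}v_{j+1}Pv_mv_sPv_jv_1$, cannot contain $v_{s-1}=v_{h+1}$, so it is only an $(m-1)$-cycle and contradicts $c(G)=k-1$ only when $m\ge k+1$. You acknowledge the case $m=k$ but leave it open, and it cannot be waved away: nothing at this stage bounds $m$ from above, and the paper later shows that $m=k$ actually holds in Case A whenever $(s,t)$ is not a minimal crossing pair, so this is the central situation rather than a boundary one. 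The paper's proof needs no cycle at all: since $v_{h-1}\in V(v_1Pv_{s-3})$, the already-proved first part gives $v_iv_{h-1}\in E(G)$, and then $P'=v_hPv_iv_{h-1}Pv_1v_{i+1}Pv_m$ is a path in $\mathcal P$ that \emph{does} traverse $v_{s-1}$, with $d_{P'}(v_h)\ge\omega$ (all of $V(H)\setminus\{v_h\}$ together with $v_{h+1}$) and $d_{P'}(v_m)=\delta$; this contradicts the standing type-(3) hypothesis for every $m\ge k$. The degree-regime contradiction, not a cycle-length contradiction, is the mechanism this clause requires, and your write-up does not supply it.

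Part (ii) is likewise not closed by your argument. The reroute $v_1Pv_{j-1}v_mPv_j\in\mathcal P$ and type (3) give $d_P(v_j)=\delta=d_P(v_m)$, but equal cardinalities of $N_P[v_j]$ and $N_P[v_m]$ do not ``force'' equality of the sets: one must prove the containment $N_P[v_j]\subseteq N_P[v_m]$ by excluding every $v_l\in N_P(v_j)\setminus N_P[v_m]$. The \Cref{c1}-style cycles do this when $v_l\notin V(\widetilde C)$ (a cycle of length at least $k$) and when $v_l\in V(\widetilde C)\setminus\{v_h\}$ (then (\ref{a4}) gives $v_{l+1}v_1\in E(G)$ and an $m$-cycle), but (\ref{a4}) exempts $v_h$, and $v_l=v_h$ is exactly the troublesome case: the natural cycle once again omits $v_{s-1}$ and stalls at length $m-1$. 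That this case carries real content is visible in the paper's Case B, where the analogous statement can only be established in the weakened form of \Cref{c18}(i) (allowing $N_P[x]\setminus N_P[v_m]=\{v_h\}$), and a separate claim (\Cref{c20'}) is needed to eliminate $v_h$. Since \Cref{c16} later invokes (ii) in full strength ($N_G(v_j)=N_P(v_m)$ for all $j\in[t+1,m]$), this omission is a substantive gap, not a bookkeeping detail.
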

		
		\begin{proof}
			We first show that $N_P[v_i]=N_P[v_1]$ for $v_i\in V(v_1Pv_{s-3})$.
			By $d_P(v_1)=\omega-1$ and $V(H)\subseteq V(P)$, $N_P[v_1]=V(H).$
			Let $v_i\in V(v_1Pv_{s-3})$. By (\ref{a5}), $N_P[v_{i}]\subseteq N_P[v_1]$ and then there is a path $v_iPv_1v_{i+1}Pv_m\in \mathcal{P}$.
			By the choice of $P$, $d_{P}(v_i)= \omega-1$ and $N_P[v_i]=N_P[v_1]$.
			Suppose $v_i,v_{i+1}\in V(v_sPv_t)\cap N_P(v_1)$. By $v_{h-1}\in V(v_1Pv_{s-3}),$ $v_iv_{h-1}\in E(G)$ and there is a path
			$P'=v_{h}Pv_iv_{h-1}Pv_1v_{i+1}Pv_m \in \mathcal P$ with $d_{P'}(v_h)\ge\omega$ and $d_{P'}(v_m)=\delta$, a contradiction.
			
			The proof of $\rm (ii)$ is similar, so we omit the proof here.
			This proves \Cref{c12}.
		\end{proof}

		Let $A_1=V(v_1Pv_{s-2})$, $A_2=N_P(v_1)\cap V(v_sPv_t)$, $A_2'=N_P(v_m)\cap V(v_sPv_t)$,
		$A_3=V(v_{t+1}Pv_m)$ and let $X=V(G)\setminus (A_1\cup A_2\cup A_3)$. Clearly, $X\ne\emptyset.$
		
		If $(s,t)$ is a minimal crossing pair, then $A_2=A_2'=\{v_s,v_t\}$. Assume that $(s,t)$ is not a minimal crossing pair. By (\ref{a4}), (\ref{a5}) and \Cref{c12}, we have $A_2=A_2'=\{v_s,v_{s+2},...,v_{t-2},v_t\}$ and
		$t\equiv s\pmod 2$.

		Clearly, $v_{s-1}=v_{h+1}\in X$ and we have $G[A_1\cup A_2]=K_\omega$ and $G[A_2\cup A_3]=K_{\delta+1}$. Note that a minimal crossing pair is also a minimum crossing pair. Moreover, if $(s,t)$ is not a minimal crossing pair, then $m=k.$

		\begin{claim}\label{c16}
			Each vertex of $X$ is only connected to $A_2\cup\{v_h\}\subseteq A_1\cup A_2\cup A_3$. Moreover, if $|A_2|\geq 3$, then $X$ is an independent set of $G$.
		\end{claim}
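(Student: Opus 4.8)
The plan is to reproduce the reasoning behind \Cref{c7'} and \Cref{c8'}, adapting it to the present configuration; the only genuinely new phenomenon is the path-vertex $v_{s-1}=v_{h+1}\in X$, whose predecessor on $P$ is $v_h=v_{s-2}\in A_1$ (recall $h=s-2$ and $G[A_1\cup A_2]=K_\omega$, so $v_h\in V(H)$). The edge $v_{s-2}v_{s-1}$ is precisely what forces the allowed landing set to be $A_2\cup\{v_h\}$ rather than $A_2$, and the whole task is to show that this one extra adjacency does not propagate any further into $A_1$.

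First I would pin down the boundary behaviour of $A_1$ and $A_3$. From \Cref{c12}(i), every vertex of $A_1\setminus\{v_h\}=V(v_1Pv_{s-3})$ satisfies $N_P[v_i]=N_P[v_1]=V(H)=A_1\cup A_2$, so no vertex of $A_1\setminus\{v_h\}$ is adjacent on $P$ to a vertex of $X$; and from \Cref{c12}(ii) together with $G[A_2\cup A_3]=K_{\delta+1}$ one gets $N_G[A_3]\subseteq A_2\cup A_3$. Consequently $[X,A_3]=\emptyset$ and $[X\cap V(P),A_1\setminus\{v_h\}]=\emptyset$. Since $N_G(a)\subseteq A_2\cup A_3$ for $a\in A_3$, any $X$-path entering $A_3$ must first meet $A_2$, so no $X$-vertex is connected to $A_3$; likewise the path-vertices of $X$ reach $A_1\cup A_2\cup A_3$ only through $A_2$ or through the single edge $v_{s-1}v_h$. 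This already yields the claim for $X\cap V(P)$.

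The substantive part is $z\in X\setminus V(P)$. Using that $G$ is $2$-connected, I would take two vertex-disjoint $(z,P)$-paths $P_1,P_2$ with $V(P_i)\cap V(P)=\{x_i\}$ and $x_1\in V(v_1Px_2)$; the maximality of $P$ as a longest $(H,T)$-path with nonadjacent endpoints in $\mathcal P$ (of the third type) forces $x_1x_2\notin E(P)$. I would then run the case analysis of \Cref{c7'}/\Cref{c8'}: if a landing point lies in $A_3$, or if both lie in $A_1$ (the clique structure of $A_1\cup A_2$ gives $x_1^-x_2^-\in E(G)$ or $x_1^+x_2^+\in E(G)$, producing an $(H,T)$-path longer than $P$), or if one lies in $A_1\setminus\{v_h\}$ and the other in $A_2$, one splices either a longer $(H,T)$-path or a cycle through all of $\widetilde C$ of length at least $k$, contradicting the maximality of $P$ or $c(G)=k-1$. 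The only surviving configuration is $x_1,x_2\in A_2\cup\{v_h\}$, which is the assertion. The point where $v_h$ must be admitted is exactly the reroute along $v_{s-2}v_{s-1}v_s$: a connection reaching $v_h$ (necessarily via $v_{s-1}$) can be absorbed into $\widetilde C$ without lengthening it past $k-1$, whereas reaching any earlier vertex of $A_1$ would close a cycle of length $\ge k$. For the ``moreover'' statement, $|A_2|\ge 3$ rules out a minimal crossing pair (which gives $A_2=\{v_s,v_t\}$), so by \Cref{c3} we have $A_2=\{v_s,v_{s+2},\dots,v_{t-2},v_t\}$, $t\equiv s\pmod 2$ and $m=k$. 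I would then copy the final paragraph of \Cref{c7'}: the alternating pattern of $A_2$ builds, for $y_1,y_2\in X\cap V(P)$ a $(y_1,y_2)$-path of order $k$, for $y_1,y_2\in A_2$ one of order $k-2$, and for mixed pairs one of order $k-1$; since $c(G)=k-1$ and $G$ is $2$-connected, any edge inside $X$ or between $X\cap V(P)$ and $X\setminus V(P)$ would close a cycle of length $\ge k$, so $X$ is independent.

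The main obstacle will be the bookkeeping around the junction $v_{s-2},v_{s-1},v_s$: one must verify that admitting $v_h$ as a landing point does not let an $X$-vertex simultaneously reach a second vertex of $A_1\setminus\{v_h\}$, and that the two-disjoint-paths argument cannot be defeated by routing one path through $v_{s-1}$ to $v_h$ and the other deeper into $A_1$ to splice a cycle of length $\ge k$. Isolating $v_h$ as the \emph{unique} extra reachable vertex—rather than an entire initial segment of $A_1$—is the delicate step; once that is pinned down, the length counts are identical to those in \Cref{c7'} and \Cref{c8'}.
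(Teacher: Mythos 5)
Your plan follows the paper's architecture — first kill $[X,A_3]$ and $[X\cap V(P),A_1\setminus\{v_h\}]$ via \Cref{c12} and the choice of $P$, then handle $x\in X\setminus V(P)$ with two vertex-disjoint $(x,P)$-paths and a case analysis on the landing points, then run a \Cref{c7'}-style independence argument — but it has a genuine gap at exactly the junction you yourself flag as ``the delicate step'', and the only two contradiction mechanisms you allow (``a longer $(H,T)$-path'' or ``a cycle of length at least $k$'') cannot close it. The problematic case is $x_1\in A_1\setminus\{v_h\}$ and $x_2=v_s$. The cycle that works for $x_2\in A_2\setminus\{v_s\}$, namely $v_1Px_1P_1xP_2x_2Pv_mx_2^{-2}Px_1^+v_1$, needs $x_2^{-2}\in N_P(v_m)$; for $x_2=v_s$ one has $x_2^{-2}=v_h\notin N_P[v_m]$ (that is the defining property of $v_h$), and $v_{s-1}=v_{h+1}\notin N_P(v_m)$ either, so there is no way to re-enter the initial segment after visiting $v_m$. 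Likewise every path splice through $x$ must skip $v_{s+1}$, so it gains $x$ but loses a vertex and has order at most $m$: no longer $(H,T)$-path comes out. The paper's resolution is a third mechanism absent from your proposal: it constructs $P'=v_hv_{h+1}x_2P_2xP_1x_1Pv_1x_1^+Pv_h^-x_2^{+2}Pv_m$ (variant $P'=v_hv_{h+1}x_2P_2xP_1x_1Pv_1x_2^{+2}Pv_m$ when $x_1v_h\in E(P)$), an $(H,T)$-path of order exactly $m$ with nonadjacent ends $v_h,v_m$, and observes that $d_{P'}(v_h)\ge\omega$ — $v_h$ sees the whole clique $V(H)=A_1\cup A_2$ on $P'$ \emph{plus} the extra vertex $v_{h+1}$ — while $d_{P'}(v_m)=\delta$. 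Hence $P'\in\mathcal P$ has endpoint degrees $(\omega,\delta)$, contradicting the standing hypothesis of the case in which \Cref{c16} is proved (type (3): every path in $\mathcal P$ has endpoint degrees $\omega-1$ and $\delta$). This ``wrong-type path'' contradiction is neither a maximality nor a circumference contradiction, and without it your case analysis does not terminate.

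Two secondary points. First, for the ``moreover'' part, copying the final paragraph of \Cref{c7'} is not enough: $v_{h+1}=v_{s-1}\in X\cap V(P)$ lies outside the alternating segment $v_sPv_t$, so the $(y_1,y_2)$-path constructions of \Cref{c7'} do not cover pairs containing it. The paper first proves $[\{v_{h+1}\},X]=\emptyset$ separately (an edge from $v_{h+1}$ into $X$ closes an $m$-cycle through $v_hv_{h+1}$, using that $X$-vertices attach only to $A_2\cup\{v_h\}$), and only then applies the \Cref{c7'} argument to the rest of $X$. Second, your clique splice for ``both landing points in $A_1$'' (getting $x_1^-x_2^-\in E(G)$ or $x_1^+x_2^+\in E(G)$) breaks down when $x_2=v_h$, since $x_2^+=v_{h+1}\notin V(H)$; that subcase needs a different reroute, such as $x_1^+Pv_{h-1}v_1Px_1P_1xP_2v_hPv_m$, which covers $V(P)\cup\{x\}$ and is an $(H,T)$-path longer than $P$ (to be fair, the paper's own explicit case list also omits this subcase).
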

		
		\begin{proof}
			By \Cref{c12}(ii) and the choice of $P$, for $j\in [t+1,m]$, $N_G(v_j)=N_P(v_j)=N_G(v_m)=N_P(v_m)\subseteq A_2\cup A_3,$ i.e., $[X,A_3]=\emptyset$.
			By $N_P[v_1]\subseteq A_1\cup A_2$ and \Cref{c12}, we have $[X\cap V(P),A_1\setminus \{v_h\}]=\emptyset$.
			Thus, each vertex of $X\cap V(P)$ is only connected to $A_2\cup\{v_h\}\subseteq A_1\cup A_2\cup A_3$.
			
			Suppose that there is a vertex  $x \in X\setminus V(P)$ such that $x$ is not only connected to $A_2\cup\{v_h\}\subseteq A_1\cup A_2\cup A_3$.
			Since $G$ is 2-connected, there are two vertex-disjoint $(x,P)$-paths $P_1$ and $P_2$ with $V(P_i)\cap V(P)=\{x_i\}$ for $i=1,2$. Assume that $x_1\in V(v_1Px_2).$
			If $x_1,x_2\in A_1\setminus \{v_h\}$, then $v_1Px_1P_1xP_2x_2Px_1^+x_2^+Pv_m$ is an $(H,T)$-path longer than $P$, a contradiction. If $x_1\in A_1\setminus \{v_h\}$ and $x_2=v_{h+1},$ then  $v_hPx_1^+v_1Px_1P_1xP_2v_{h+1}Pv_m$ is an $(H,T)$-path longer than $P$, a contradiction. If $x_1\in A_1\setminus \{v_h\}$ and $x_2\in (X\cap V(P))\setminus \{v_{h+1}\},$ then there is a cycle of length at least $ k$, a contradiction. Suppose $x_1\in A_1\setminus \{v_h\}$ and $x_2\in A_2.$ Without loss of generality, let $|A_2|\ge 3$. The case of $|A_2|=2$ is similar. If $x_2\ne v_s$, then there is a cycle $v_1Px_1P_1xP_2x_2Pv_mx_2^{-2}Px_1^+v_1$ of length at least $ k$.  If  $x_2=v_s$, then  $P'=v_hv_{h+1}x_2P_2xP_1x_1Pv_1x_1^+Pv_h^-x_2^{+2}Pv_m$ when $x_1v_h\notin E(P)$ or $P'=v_hv_{h+1}x_2P_2xP_1x_1Pv_1x_2^{+2}Pv_m$ when $x_1v_h\in E(P)$ is an $(H,T)$-path with $d_{P'}(v_h)\ge\omega$ and $d_{P'}(v_m)=\delta$. Both situations lead to a contradiction. Thus, each vertex of $X$ is only connected to $A_2\cup\{v_h\}\subseteq A_1\cup A_2\cup A_3$.
			
			Let $|A_2|\ge 3$. Clearly, $m=k$.
			We assert that $[\{v_{h+1}\},X\cap V(P)]=\emptyset$. Otherwise, there is an $m$-cycle, a contradiction. Suppose that there is a vertex $x\in X\setminus V(P)$ such that $xv_{h+1}\in E(G)$. Since each vertex of $X$ is only connected to $A_2\cup\{v_h\}\subseteq A_1\cup A_2\cup A_3$, there is an $m$-cycle, a contradiction. Hence, $[\{v_{h+1}\},X]=\emptyset$.
			The rest of the proof is similar to that of \Cref{c7'}. Thus, $X$ is an independent set of $G$.
			This proves \Cref{c16}.
		\end{proof}

		Suppose $|A_2|\ge 3.$ By \Cref{c16}, $X$ is an independent set of $G$ and $|A_2|\ge \delta-1$.
		By \Cref{c12}(ii), $[\{v_h\}\cup X,A_3]=\emptyset$.
		Note that $G[A_2\cup A_3]=K_{\delta+1}$.
		Suppose $|A_2|= \delta-1$.
		Then $G[A_3]=K_2$ and $[X,A_2\cup\{v_h\}]$ is complete.
		Now $G+yv_h$ contains no cycle of length at least $ k$ with $y\in A_3$, contradicting to the maximality of $G.$
		Suppose $|A_2|= \delta$. Then $A_3=\{v_m\}$.
		By the maximality of $G$, $[X,A_2\cup\{v_h\}]$ is complete.
		Now $G+yv_h$ contains no cycle of length at least $ k$ with $y\in A_3$, contradicting to the maximality of $G.$
		
		Let $|A_2|=2$.
		By $c(G)=k-1$, $[\{v_{h+1}\}, X]=\emptyset$ and then $N_G(v_{h+1})\subseteq\{v_{h},v_s,v_t\}$. Now $\omega(G)=\omega>3$.
		Suppose $N_G(v_{h+1})=\{v_{h},v_s\}$. Then $\delta(G)=2$. Hence, $\delta=2$ and $|A_3|=1.$ Recall that each vertex of $X$ is only connected to $A_2\cup\{v_h\}\subseteq A_1\cup A_2\cup A_3$. By $c(G)=k-1$, $X\setminus\{v_{h+1}\}$ is an independent set of $G$.
		By $v_hv_m\notin E(G),$
		$G+v_mv_h$ contains no cycle of length at least $ k$, contradicting to the maximality of $G.$
		Thus,  $N_G(v_{h+1})=\{v_{h},v_s,v_t\}$. Then $|A_3|=1$ or $|A_3|=2$. By the above argument, we have $|A_3|=2$ and $\delta=3.$ Clearly, $X\setminus\{v_{h+1}\}$ consists of some $K_1$ or $K_2.$ Moreover, $v_h$ is not adjacent to any vertex of $K_2$ in $X$. 
		
		Thus, $G=H_4(n,\omega,3)$, where $l_1+l_2\ge 3$.

		\vskip 3mm
		{\bf Case B.} $t\le h<h+1\le m-1$.
		\vskip 3mm
		
		Recall that $s\ge 2$ and $t\le m-1.$ By (\ref{a4}) and (\ref{a5}), we have
		\begin{align}\label{a0}
			V(v_{1}Pv_{s})\subseteq N_P[v_1]~\text{and}~V(v_{t}Pv_{m})\setminus \{v_h\}\subseteq N_P[v_m].
		\end{align}
		
		\begin{claim}\label{c17}
			$N_P[v_i]=N_P[v_1]$ for $1\leq i\leq s-1$. Moreover, $v_1$ is not adjacent to any two consecutive vertices of $V(v_sPv_t).$
		\end{claim}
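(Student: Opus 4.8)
The plan is to first pin down that $N_P[v_1]=V(H)$ and that the entire initial block $v_1,\dots,v_s$ sits inside this clique, and then to bootstrap the trivial inclusion $N_P[v_i]\supseteq V(H)$ into equality by a rotation exploiting that we are in the third type. Since we are in the third type, $d_P(v_1)=\omega-1$; together with $V(H)\subseteq V(P)$, $N_H(v_1)\subseteq N_P(v_1)$ and $\omega'=\omega$ this forces $N_P(v_1)=V(H)\setminus\{v_1\}$, i.e. $N_P[v_1]=V(H)$. By (\ref{a0}) we have $V(v_1Pv_s)\subseteq N_P[v_1]=V(H)$, so for every $i$ with $1\le i\le s-1$ the vertex $v_i$ lies in the clique $V(H)$ and hence $N_P[v_i]\supseteq V(H)=N_P[v_1]$. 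For the reverse inclusion I would rotate about $v_1$: as $v_{i+1}\in V(H)=N_P[v_1]$, the path $P_i=v_iPv_1v_{i+1}Pv_m$ spans $V(P)$ and has nonadjacent ends (because $i<s$ gives $v_iv_m\notin E(G)$), so $P_i\in\mathcal P$. Since $d_{P_i}(v_i)=d_P(v_i)$ and every path in $\mathcal P$ has $H$-endpoint of degree exactly $\omega-1$, a neighbour of $v_i$ outside $V(H)$ would give $d_{P_i}(v_i)\ge\omega$, contradicting the third type; hence $|N_P[v_i]|=\omega=|V(H)|$ and $N_P[v_i]=V(H)=N_P[v_1]$.

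For the \emph{moreover} part I would argue by contradiction: suppose $v_1$ is adjacent to two consecutive vertices $v_j,v_{j+1}$ of $v_sPv_t$, so $s\le j\le t-1$, and since $N_P(v_1)=V(H)\setminus\{v_1\}$ both $v_j,v_{j+1}\in V(H)$. The goal is to build a Hamilton cycle of $V(P)$, whose length $m\ge k$ contradicts $c(G)=k-1$. Using the edges $v_1v_{j+1}$ and $v_sv_m$ together with the clique $V(H)\supseteq\{v_1,\dots,v_s,v_j,v_{j+1}\}$, I would traverse $v_1\to v_{j+1}\to\cdots\to v_m$, jump $v_m\to v_s$, traverse $v_s\to\cdots\to v_j$, and then return to $v_1$ through the remaining clique vertices via the clique edges $v_jv_2,\,v_2v_3,\dots,v_{s-2}v_{s-1},\,v_{s-1}v_1$ (when $s\ge 3$); when $s=2$ the block $v_2,\dots,v_{s-1}$ is empty and one closes directly with $v_jv_1$. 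Each such route visits every vertex of $P$ exactly once, giving the forbidden $m$-cycle.

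The only delicate point is that this reroute through the clique is legitimate precisely because, in Case B, the entire segment $v_1,\dots,v_s$ lies in $V(H)$; this is what (\ref{a0}) buys us and what separates Case B from Case A, where $v_{s-1}=v_{h+1}\notin V(H)$ blocks exactly this construction (and is why \Cref{c12} could only reach $v_{s-3}$). So the main thing to verify carefully is the clique membership $V(v_1Pv_s)\subseteq V(H)$ coming from (\ref{a0}), after which the argument is insensitive to the exact location of the special vertex $v_h\ge t$. The remaining routine checks are the degenerate indices $j=s$ and $s=2$, where some traversal segments collapse to a single vertex or vanish while the cycle stays Hamiltonian.
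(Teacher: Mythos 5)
Your proposal is correct and takes essentially the same route as the paper: the equality $N_P[v_i]=N_P[v_1]$ is obtained by the same rotation $v_iPv_1v_{i+1}Pv_m$ combined with the type-(3) constraint that every path in $\mathcal P$ has its $H$-endpoint of path-degree exactly $\omega-1$, and the ``moreover'' part is the same $m$-cycle contradiction. The only (immaterial) difference is the routing of that cycle: the paper uses the chord $v_{s-1}v_j$ supplied by the first part and closes with $v_{j-1}v_1$, whereas you thread through the clique edges $v_jv_2$ and $v_{s-1}v_1$.
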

		\begin{proof}
			If $s=2$, we are done, so assume that $s\ge3$.
			By (\ref{a0}), $V(v_{2}Pv_{s})\subseteq V(H)$ and then $N_P[v_1]\subseteq N_P[v_i]$, for $2\leq i\leq s-1$.
			By the choice of $P$, $d_P(v_i)=d_P(v_1)=\omega-1.$
			Thus, $N_P[v_i]= N_P[v_1]$ for $1\leq i\leq s-1$.
			Suppose $v_{j-1},v_{j}\in N_P(v_1)\cap V(v_sPv_t)$.
			Since $N_P[v_i]= N_P[v_1]$ for $1\leq i\leq s-1$, $v_{s-1}v_j\in E(G)$.
			Then $v_1Pv_{s-1}v_jPv_mv_sPv_{j-1}v_1$ is an $m$-cycle, a contradiction.
			This proves \Cref{c17}.
		\end{proof}

		\begin{claim}\label{c18}
			
			$(\rm i)$ For any vertex $x$ of $ V(v_{t+1}Pv_{m})\setminus \{v_h,v_{h+1}\}$,  $N_P[x]\setminus N_P[v_m]=\emptyset$ or $\{v_h\}$.
			
			$(\rm ii)$ If $h\neq t$, then  $N_G(v_h)=N_P(v_h)=N_P(v_m)$.
			
			$(\rm iii)$ If $h=t$, then $N_P[v_{h+1}]\setminus\{v_h\}\subseteq N_P[v_m]$ and $|N_P[v_{h+1}]\cap N_P[v_m]|=\delta-1$.
			
		\end{claim}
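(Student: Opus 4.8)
All three parts rest on the same two moves that dominate this subcase: \emph{rotating} $P$ so that a chosen tail vertex of $V(v_{t+1}Pv_m)$ becomes an endpoint, and then feeding the rotated path back into the standing hypothesis that we are in the third type (every path in $\mathcal P$ has endpoint degrees $\omega-1$ and $\delta$) together with the bound $c(G)=k-1$. Throughout I would use what is already available: $N_P[v_1]=V(H)$, so every $v_j$ with $j>t$ lies in $V(T)$ and is non-adjacent to $v_1$; the containment (\ref{a0}); \Cref{c17}; and the alternatives (\ref{a4}), (\ref{a5}), which say that apart from the single exceptional vertex each vertex of $\widetilde C$ lies in $N_P^-(v_1)\cup N_P[v_m]$, respectively $N_P[v_1]\cup N_P^+(v_m)$.

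For part (i) I would argue exactly as in \Cref{c1}: take $v_j\in V(v_{t+1}Pv_m)\setminus\{v_h,v_{h+1}\}$ and a putative neighbour $v_\ell\in N_P(v_j)$ with $v_\ell\notin N_P[v_m]\cup\{v_h\}$. Since $v_j\neq v_h$ and $j-1\neq h$ (because $v_j\neq v_{h+1}$), (\ref{a0}) gives $v_jv_m,v_{j-1}v_m\in E(G)$, and the same containment forces $\ell\le t-1$. Now split on whether $v_\ell\in V(\widetilde C)$: if it is, then $v_\ell\notin N_P[v_m]$ and (\ref{a4}) put $v_{\ell+1}\in N_P(v_1)$, so $v_1Pv_\ell v_jPv_mv_{j-1}Pv_{\ell+1}v_1$ is an $m$-cycle; if $v_\ell$ lies in the gap $v_{p+1}Pv_{q-1}$, then $v_1Pv_\ell v_jPv_mv_{j-1}Pv_qv_1$ has length $\ell+(m-q+1)\ge k$ since $\ell\ge p+1$ and $m-q+1=k-1-p$. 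Either outcome contradicts $c(G)=k-1$.

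For part (ii) I would first rotate. As $h\ge t+1$ here, $v_{h-1}\in V(v_tPv_m)\setminus\{v_h\}$, so $v_{h-1}v_m\in E(G)$ by (\ref{a0}) and $P'=v_1Pv_{h-1}v_mPv_h$ is a path of order $m$ whose endpoints $v_1\in V(H)$ and $v_h\in V(T)$ are non-adjacent; hence $P'\in\mathcal P$. Since $d_{P'}(v_1)=\omega-1$ and we are in the third type, the other endpoint has $d_{P'}(v_h)=\delta$; moreover a neighbour of $v_h$ off $P$ would extend $P'$ to a longer $(H,T)$-path, so $N_G(v_h)=N_P(v_h)$ and $|N_P(v_h)|=\delta=|N_P(v_m)|$. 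It then suffices to prove $N_P(v_h)\subseteq N_P(v_m)$. Assuming $v_a\in N_P(v_h)\setminus N_P(v_m)$, one gets $a\le t-1$ as in part (i); combining the chord $v_av_h$ with the rotation edge $v_{h-1}v_m$, the walk $v_1Pv_a\,v_hPv_m\,v_{h-1}P(\cdot)\,v_1$ closes to a cycle of length $\ge k$: if $v_a\in V(\widetilde C)$ then (\ref{a4}) yields $v_{a+1}\in N_P(v_1)$ and closing at $v_{a+1}$ gives an $m$-cycle, while if $v_a$ is in the gap the closure at $v_q$ again gives length $\ge k$ by the count of part (i). This contradiction forces $N_P(v_h)=N_P(v_m)$.

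Part (iii) is the delicate boundary case $h=t$, where $v_h=v_t\in V(H)$ is adjacent to $v_1$ but, crucially, $v_tv_m\notin E(G)$; this is precisely why $v_{h+1}=v_{t+1}$ had to be excluded from part (i), since the part-(i) closure re-enters $v_m$ through $v_{j-1}=v_t$, now unavailable. For the containment $N_P[v_{t+1}]\setminus\{v_t\}\subseteq N_P[v_m]$ I would rerun the displaced-neighbour argument for a neighbour $v_\ell\neq v_t$ of $v_{t+1}$ with $v_\ell\notin N_P[v_m]$ (again $\ell\le t-1$), but reroute the return through a front-neighbour of $v_m$ (such as $v_p$ or $v_s$) and the crossing pair, splitting on $v_\ell\in V(\widetilde C)$ versus the gap as before. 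For the cardinality, $v_{t+1}v_m,v_{t+1}v_t\in E(G)$ and $v_t,v_m\notin N_P(v_m)$, so the containment just proved gives $N_P(v_{t+1})\cap N_P(v_m)=N_P(v_{t+1})\setminus\{v_t,v_m\}$ and hence $|N_P[v_{t+1}]\cap N_P[v_m]|=d_P(v_{t+1})$; the statement is therefore equivalent to $d_P(v_{t+1})=\delta-1$. Pinning this value exactly is the step I expect to be the main obstacle: the bound $d_P(v_{t+1})\le\delta-1$ should follow by showing that any additional path-neighbour of $v_{t+1}$ either yields a cycle of length $\ge k$ or an $(H,T)$-path of the first or second type (contradicting that $P$ is of the third type), while the reverse estimate should come from $\delta(G)=\delta$ together with the clique structure imposed on $V(T)$ by \Cref{L2.2}.
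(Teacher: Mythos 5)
Parts (i) and (ii) of your proposal are correct and essentially identical to the paper's argument: the same rotations $v_1Pv_{j-1}v_mPv_j$ and $v_1Pv_{h-1}v_mPv_h$, the same dichotomy between $v_\ell\in V(\widetilde C)$ and the gap $V(v_{p+1}Pv_{q-1})$, and the same two closures (at $v_{\ell+1}$ via (\ref{a4}), giving an $m$-cycle, or at $v_q$ with the count $\ell+m-q+1\ge k$).

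Part (iii) is where you have a genuine gap, and it lies in the cardinality step. Your reduction is right as far as it goes: given the containment, $|N_P[v_{t+1}]\cap N_P[v_m]|=d_P(v_{t+1})$, so the printed equality would force $d_P(v_{t+1})=\delta-1$. But the upper bound $d_P(v_{t+1})\le\delta-1$ that you then propose to prove is \emph{false}, so no case analysis can establish it. Indeed, by \Cref{c17} we have $v_{s-1}v_t\in E(G)$, so (recalling $h=t$ and $v_sv_m\in E(G)$) $P^*=v_1Pv_{s-1}v_tPv_sv_mPv_{t+1}$ is a path on all of $V(P)$ from $v_1$ to $v_{t+1}$, and $v_1v_{t+1}\notin E(G)$ by the maximality of $t$; hence $P^*\in\mathcal P$. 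Maximality of the paths in $\mathcal P$ (together with $c(G)=k-1$) gives $N_G(v_{t+1})=N_P(v_{t+1})$, so already $d_P(v_{t+1})=d_G(v_{t+1})\ge\delta(G)=\delta$; and the standing type-(3) hypothesis applied to $P^*$, whose endpoint $v_1$ has path-degree $\omega-1$, pins $d_P(v_{t+1})=d_{P^*}(v_{t+1})=\delta$ exactly. Thus the true value of $|N_P[v_{h+1}]\cap N_P[v_m]|$ is $\delta$: the claim as printed has a bracket slip, and the statement that is both true and actually used afterwards (in \Cref{c20}) is $|N_P[v_{h+1}]\cap N_P(v_m)|=\delta-1$, equivalently $|N_P(v_m)\setminus N_P[v_{h+1}]|\le 1$. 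That is the inequality $d_P(v_{h+1})\ge\delta$ --- the opposite direction from the one you set out to prove; the rotation $P^*$ above is exactly the justification the paper leaves implicit when it writes ``Since $d_P(v_{h+1})\ge\delta$''. Your fallback for the lower bound is also unsound as stated: $\delta(G)=\delta$ controls $d_G(v_{t+1})$, not $d_P(v_{t+1})$, and \Cref{L2.2} is not applicable at this stage.

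A secondary, smaller point concerns your containment argument in (iii): deriving $\ell\le t-1$ and then ``splitting on $v_\ell\in V(\widetilde C)$ versus the gap as before'' does not suffice. For $v_\ell\in V(\widetilde C)$ with $\ell\ge q$, the reroute through $v_p$ produces a cycle of length only $p+m-\ell+1\le p+m-q+1=k-1$, which is no contradiction. The paper instead first disposes of $v_\ell\in V(v_1Pv_{s-1})$ with $m$-cycles built from the \Cref{c17}-edges $v_{s-1}v_t$ and $v_1v_{\ell+1}$, and for $v_\ell\in V(\widetilde C)\cap V(v_{s+1}Pv_{t-1})$ it must first deduce $v_{\ell-1}v_m\in E(G)$ (via (\ref{a4}), then \Cref{c17}, then (\ref{a5})) before closing the $m$-cycle $v_1Pv_{s-1}v_tPv_\ell v_{t+1}Pv_mv_{\ell-1}Pv_sv_1$. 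These steps are recoverable, but they are missing from your sketch.
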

		
		\begin{proof}
			Recall that $(p,q)$ is a minimum crossing pair and $\widetilde{C}=v_1Pv_pv_mPv_qv_1.$
			First, we prove (i). Let $x\in V(v_{t+1}Pv_{m})\setminus \{v_h,v_{h+1}\}$. We have $x^-\notin N_P^-(v_1)$ and $x^-\ne v_{h}$. By (\ref{a5}), $x^-v_m\in E(G)$.
			To the contrary, suppose $v_l\in N_P[x]\setminus (N_P[v_m]\cup \{v_h\})$.
			By $v_sv_m\in E(G)$, $l\ge s+1.$
			If $v_l\notin V(\widetilde{C})$, then  $v_1Pv_lxPv_mx^-Pv_qv_1$ is a cycle of length at least $k$, a contradiction.
			Hence, $v_l\in V(\widetilde{C})$. By $v_l\notin N_P[v_m]\cup\{v_h\}$ and (\ref{a4}), $v_l\in N^-_P(v_1)$ and hence $v_{l+1}\in N_P(v_1)$. Now $v_1Pv_lxPv_mx^-Pv_{l+1}v_1$ is an $m$-cycle, a contradiction. Thus, $N_P[x]\subseteq N_P[v_m]\cup \{v_h\}$.
			By $x^-v_m\in E(G)$, there is a path $v_1Px^-v_mPx\in \mathcal{P}$. By the choice of $P$,  $d_P(x)=d_G(x)=\delta.$ Thus, $N_P[x]\setminus N_P[v_m]=\emptyset~\text{or}~\{v_h\}.$
			
			Next we prove (ii). By $h\ne t$, we have $v_mv_{h-1}\in E(G)$. Consider the path $v_1Pv_{h-1}v_m\allowbreak Pv_h\in \mathcal{P}$. Then $N_G(v_h)=N_P(v_h).$ Suppose there is a vertex $y\in N_P(v_h)\setminus N_P(v_m)$.
			If $y\in V(\widetilde{C}),$ by (\ref{a4}), then $y\in N_P^-(v_1)$ and then $v_1y^+
			\in E(G)$.
			Now $v_1Pyv_hPv_mv_{h-1}Py^+v_1$ is a cycle of length $m\ge k,$ a contradiction.
			If $y\notin V(\widetilde{C}),$ then $v_1Pyv_hPv_mv_{h-1}Pv_qv_1$ is a cycle of length at least $k$, a contradiction.
			Hence,  $N_G(v_h)=N_P(v_h)=N_P(v_m)$.

			Now we prove (iii). Suppose to the contrary that  $v_l\in N_P(v_{h+1})\setminus (N_P[v_m]\cup \{v_h\})$.
			We assert that $v_l\notin V(v_1Pv_{s-1}).$ Otherwise, if $v_l\in V(v_1Pv_{s-2}),$ by \Cref{c17}, then $v_1Pv_lv_{h+1}Pv_mv_sP\\v_tv_{s-1}Pv_{l+1}v_1$ is an $m$-cycle.
			If $v_l=v_{s-1}$, by \Cref{c17}, then $v_1Pv_lv_{h+1}Pv_mv_sPv_tv_1$ is an $m$-cycle. Both situations lead to a contradiction.
			Thus, $v_l\in V(v_{s+1}Pv_{t-1})$.
			If $v_l\notin V(\widetilde{C})$, then $v_1Pv_pv_mPv_{h+1}v_lPv_tv_1$ is a cycle of length at least $k$, a contradiction.  If $v_l\in V(\widetilde{C})$, by $v_l\notin N_P[v_m]\cup \{v_h\}$ and (\ref{a4}), then $v_l\in N_P^-(v_1)$ and $v_{l+1}v_1\in E(G).$ By \Cref{c17}, $v_l\notin N_P[v_1]$.  By (\ref{a5}), $v_l\in N_P^+(v_m)$ and hence $v_{l-1}v_m\in E(G)$.
			Now $v_1Pv_{s-1}v_tPv_lv_{h+1}Pv_mv_{l-1}Pv_sv_1$ is an $m$-cycle, a contradiction.
			Thus, $N_P[v_{h+1}]\setminus\{v_h\}\subseteq N_P[v_m]$.
			Since $d_P(v_{h+1})\ge \delta$, $|N_P[v_{h+1}]\cap N_P[v_m]|=\delta-1$.
			This proves \Cref{c18}.
		\end{proof}

		\begin{claim}\label{c20}
			$v_m$ is not adjacent to any two consecutive vertices of $V(v_sPv_t).$
		\end{claim}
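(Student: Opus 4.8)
The plan is to argue by contradiction. Suppose $v_fv_m, v_{f+1}v_m\in E(G)$ for some pair of consecutive vertices $v_f,v_{f+1}$ with $s\le f<f+1\le t$. Since we are in Case~B we have $t\le h\le m-2$, so $v_{t+1}$ exists and $v_{t+1}\ne v_m$. The whole argument rests on a single auxiliary fact: $v_{t+1}$ is adjacent to at least one of $v_f,v_{f+1}$. Granting this, I exhibit a cycle through all of $v_1,\dots,v_m$, that is a cycle of length $m\ge k$, contradicting $c(G)=k-1$.

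To establish the auxiliary fact I would run a short degree count based on \Cref{c18}, split according to the position of $h$. If $h=t+1$, then $v_{t+1}=v_h$ and \Cref{c18}(ii) gives $N_P(v_{t+1})=N_P(v_m)\supseteq\{v_f,v_{f+1}\}$, so both adjacencies hold. If $h\ge t+2$, then $v_{t+1}$ is a generic tail vertex, so by \Cref{c18}(i) it has degree exactly $\delta$ with $N_P(v_{t+1})\subseteq N_P(v_m)\cup\{v_m,v_h\}$; since $v_{t+1}\in N_P(v_m)$ by (\ref{a0}), the edge $v_{t+1}v_m$ accounts for one neighbour and the remaining $\delta-1$ neighbours lie in the $\delta$-element set $(N_P(v_m)\setminus\{v_{t+1}\})\cup\{v_h\}$, so $v_{t+1}$ misses at most one vertex of $N_P(v_m)\setminus\{v_{t+1}\}$; as $v_f,v_{f+1}$ are two distinct such vertices, at least one is a neighbour of $v_{t+1}$. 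The case $h=t$ is identical, using \Cref{c18}(iii): then $v_{t+1}=v_{h+1}$, the path edge $v_{t+1}v_t$ plays the role of one extra neighbour, and again $v_{t+1}$ can miss at most one vertex of $N_P(v_m)$, forcing $v_{t+1}\sim v_f$ or $v_{t+1}\sim v_{f+1}$.

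I then close the argument by constructing the cycle. If $v_{t+1}v_f\in E(G)$, take
\[
v_mPv_{t+1}v_fPv_1v_tPv_{f+1}v_m,
\]
and if $v_{t+1}v_{f+1}\in E(G)$, take
\[
v_mPv_{t+1}v_{f+1}Pv_tv_1Pv_fv_m.
\]
In either case the three subpaths partition $\{v_1,\dots,v_m\}$ into $\{v_{t+1},\dots,v_m\}$, $\{v_1,\dots,v_f\}$ and $\{v_{f+1},\dots,v_t\}$, and all the chords used, namely $v_1v_t$, $v_fv_m$, $v_{f+1}v_m$ together with $v_{t+1}v_f$ (or $v_{t+1}v_{f+1}$), are present. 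This yields a cycle of length $m\ge k$, the desired contradiction. I would stress that this construction never uses the edge $v_tv_m$, which matters because when $h=t$ the vertex $v_t=v_h$ is not adjacent to $v_m$.

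The only real subtlety I anticipate is the auxiliary adjacency fact, and specifically the need to run the degree count in the three slightly different regimes $h=t$, $h=t+1$ and $h\ge t+2$: the exceptional vertex $v_h$ sits in the tail and distorts the otherwise uniform behaviour $N_P[v_j]=N_P[v_m]$, so the bookkeeping must verify that $v_{t+1}$ can fail to be adjacent to at most one vertex of $N_P(v_m)$, whence two consecutive targets $v_f,v_{f+1}$ cannot both be missed. Once that adjacency is in hand, the cycle construction itself is routine.
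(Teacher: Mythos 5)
Your proof is correct, and its core mechanism --- find a tail vertex adjacent to one of the two consecutive neighbours of $v_m$, then close a spanning $m$-cycle --- is the same as the paper's; in the regime $h=t$ your first cycle is literally the paper's cycle $v_1Pv_iv_{h+1}Pv_mv_{i+1}Pv_tv_1$ traversed backwards. Where you genuinely diverge is the case $h\neq t$: the paper pivots on $v_h$, using \Cref{c18}(ii) to get $v_{i+1}v_h\in E(G)$, and closes the five-segment cycle $v_1Pv_iv_mv_{h+1}Pv_{m-1}v_{t+1}Pv_hv_{i+1}Pv_tv_1$, which additionally needs the edges $v_mv_{h+1}$ and $v_{m-1}v_{t+1}$; the latter is asserted without justification and, when $h\ge t+2$ (or when $v_{m-1}=v_{h+1}$), would itself require a one-miss counting argument of exactly the kind you run. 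Your uniform pivot $v_{t+1}$, the three-regime split $h=t$, $h=t+1$, $h\ge t+2$, and the single three-segment cycle that deliberately avoids the edge $v_tv_m$ give a cleaner and more robust argument that sidesteps that unjustified edge entirely. Two caveats, both of which apply equally to the paper's own proof of this claim: first, the fact that $d_P(v_{t+1})=\delta$ (resp.\ $d_P(v_{h+1})\ge\delta$) is established inside the proofs of \Cref{c18}(i) and (iii), via the path exchange $v_1Pv_tv_mPv_{t+1}\in\mathcal{P}$ and the type-(3) property of $\mathcal{P}$, not in their statements, so your citation ``by \Cref{c18}(i) it has degree exactly $\delta$'' is loose; second, in the regime $h=t$ the literal equality $|N_P[v_{h+1}]\cap N_P[v_m]|=\delta-1$ of \Cref{c18}(iii) would allow $v_{t+1}$ to miss \emph{two} vertices of $N_P(v_m)$, so your ``miss at most one'' count (like the paper's parallel deduction) really rests on the containment $N_P[v_{h+1}]\setminus\{v_h\}\subseteq N_P[v_m]$ combined with $d_P(v_{h+1})\ge\delta$ from that claim's proof, under which reading your bookkeeping is right.
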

		
		\begin{proof}
			To the contrary, suppose $v_i,v_{i+1}\in N_P(v_m)\cap V(v_sPv_t)$.
			Suppose $h=t$.
			By \Cref{c18}(iii), $v_iv_{h+1}\in E(G)$ or $v_{i+1}v_{h+1}\in E(G)$.
			If $v_iv_{h+1}\in E(G)$, then $v_1Pv_iv_{h+1}Pv_mv_{i+1}Pv_tv_1$ is an $m$-cycle, a contradiction.
			If $v_{i+1}v_{h+1}\in E(G)$, then $v_1Pv_{s-1}v_tPv_{i+1}v_{h+1}Pv_mv_iPv_sv_1$ is an $m$-cycle, a contradiction.
			Hence, $h\neq t$. Note that $v_{m-1}v_{t+1}, v_{m-1}v_h, v_mv_{h+1}\in E(G).$
			By \Cref{c18}(ii), $v_{i+1}v_h\in E(G)$. Now $v_1Pv_iv_mv_{h+1}Pv_{m-1}v_{t+1}Pv_hv_{i+1}Pv_tv_1$ is an $m$-cycle, a contradiction. This proves \Cref{c20}.
		\end{proof}
		
		\begin{claim}\label{c20'}
			If $h\neq t$, then $N_P[v_{h+1}]\setminus\{v_h\}\subseteq N_P[v_m]$ and $|N_P[v_{h+1}]\cap N_P[v_m]|=\delta-1$.
		\end{claim}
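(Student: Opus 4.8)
The plan is to mirror the proof of \Cref{c18}(iii), treating the single vertex $v_h$ there as now being joined to $v_m$ by the nonempty near-tail $v_{t+1}Pv_h$. The first step is to record the two structural facts that drive everything. Since $h\neq t$, \Cref{c18}(ii) gives $N_G(v_h)=N_P(v_h)=N_P(v_m)$, so $v_h$ and $v_m$ are nonadjacent twins with $N_G(v_h)=N_G(v_m)=N_P(v_m)$; in particular the path edge $v_hv_{h+1}$ forces $v_{h+1}\in N_P(v_m)$, i.e. $v_mv_{h+1}\in E(G)$. (This is exactly the role played by ``$h=t$'' in \Cref{c18}(iii), now obtained from the twin relation; alternatively it follows from (\ref{a4}) together with the maximality of $t$.) I would also note $v_h\sim v_s$ and $v_h\sim v_t$, since $v_s,v_t\in N_P(v_m)=N_G(v_h)$.

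Next I would assume, for contradiction, that there is a vertex $v_l\in N_P(v_{h+1})\setminus(N_P[v_m]\cup\{v_h\})$ and localise it. By (\ref{a0}) every vertex of $V(v_tPv_m)\setminus\{v_h\}$ lies in $N_P[v_m]$, and $v_l\notin\{v_h,v_{h+1}\}$, so $v_l\in V(v_1Pv_{t-1})$; splitting on whether $v_l$ lies in the head clique $V(v_1Pv_{s-1})$ (which is a clique inside $H$ by \Cref{c17}) or in the middle $V(v_{s+1}Pv_{t-1})$, I would in each case build a cycle of length at least $k$, contradicting $c(G)=k-1$. The skeletons of these cycles are the ones used in \Cref{c18}(iii): enter the far-tail through the chord $v_lv_{h+1}$, return through $v_mv_s$ (or $v_mv_t$) and $v_1v_t$ (or the clique edge $v_tv_{s-1}$), and close inside $H$.

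The main obstacle is that, unlike in the $h=t$ case, these skeleton cycles omit the near-tail $v_{t+1}Pv_h$, so they only reach length $m-(h-t)$, which need not exceed $k-1$ when $m=k$; the near-tail must therefore be absorbed, and this is where all the new work lies. Here I would use the twin facts: because $v_hv_{h+1}\in E(G)$, the blocks middle, near-tail and far-tail can be threaded in the natural order $v_sPv_m$ (so that $v_{t+1}Pv_h$ is picked up between $v_t$ and $v_{h+1}$), while $v_h\sim v_s,v_t$ allow rerouting when the natural segment would collide with the chord $v_lv_{h+1}$. Equivalently, one may pass to the far-tail reversal $P'=v_1Pv_{h+1}v_mPv_{h+2}\in\mathcal P$, or perform the twin swap $v_h\leftrightarrow v_m$, which in each configuration reduces the situation to the already-settled $h=t$ case of \Cref{c18}(iii). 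Once no such $v_l$ survives, we obtain $N_P[v_{h+1}]\setminus\{v_h\}\subseteq N_P[v_m]$, and the cardinality $|N_P[v_{h+1}]\cap N_P[v_m]|=\delta-1$ follows from this containment together with $d_P(v_{h+1})\ge\delta$, exactly as at the end of the proof of \Cref{c18}(iii).
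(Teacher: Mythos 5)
Your opening moves are sound and match the paper's: invoking \Cref{c18}(ii) to get $N_G(v_h)=N_P(v_h)=N_P(v_m)$, deducing $v_mv_{h+1}\in E(G)$ from the path edge $v_hv_{h+1}$, and localising a hypothetical $v_l\in N_P(v_{h+1})\setminus(N_P[v_m]\cup\{v_h\})$ into $V(v_1Pv_{t-1})$. The gap is in your endgame. You assume throughout that the contradiction will be a cycle of length at least $k$ (found directly, or by ``reducing to the $h=t$ case'' of \Cref{c18}(iii), whose proof is itself cycle-based). But no such contradiction is available: the paper's own proof shows that the assumed configuration forces $G$ to be a \emph{proper subgraph of} $H_1(n,\omega+1,\delta)$ with $v_1v_{h+1}$ missing, and $c(H_1(n,\omega+1,\delta))=\omega+\delta=k-1$. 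In other words, there is a perfectly consistent graph in which $P$, the crossing structure, $h\neq t$, and a bad $v_l$ all coexist while the circumference is still $k-1$ --- for instance a vertex of $X$ adjacent to $v_{h+1}$, which the minimum-degree condition can even force. The only contradiction that kills this configuration is with the \emph{edge-maximality} of $G$: adding $v_1v_{h+1}$ keeps $G$ inside $H_1(n,\omega+1,\delta)$ and hence creates no cycle of length $k$. Your proposal never invokes edge-maximality at this stage, so it cannot close the argument; correspondingly, the paper must first pin down the structure completely ($V(v_{t+1}Pv_m)=\{v_h,v_{h+1},v_m\}$, then $m=k$, then the partition $A_1\cup A_2\cup A_3\cup X$ with every vertex of $X$ attached only to $A_2$ and $X$ independent) before it can make that maximality argument --- this is the ``new work,'' and it is structural, not cycle-hunting.

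Your two proposed reduction devices also fail concretely. The ``twin swap'' $v_h\leftrightarrow v_m$ (replacing $P$ by $v_1Pv_{h-1}v_mv_{h+1}Pv_{m-1}v_h$) yields a legitimate path of $\mathcal P$, but since $N_G(v_h)=N_G(v_m)$, the twins are nonadjacent to each other and to $v_1$, and the swap moves no other vertex, the new path has exactly the same neighbor positions as $P$; its exceptional vertex sits at the same position $h\neq t$, so nothing is reduced. The far-tail reversal $v_1Pv_{h+1}v_mPv_{h+2}$ is likewise useless precisely in the critical configuration, where the far tail is $\{v_h,v_{h+1},v_m\}$ and this path coincides with $P$.
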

		
		\begin{proof}
			To the contrary, suppose there is a vertex $v_l\in N_P(v_{h+1})\setminus (N_P[v_m]\cup \{v_h\})$. First, we show that $V(v_{t+1}Pv_m)=\{v_h,v_{h+1},v_m\}$.
			Suppose $V(v_{t+1}Pv_m)\neq \{v_h,v_{h+1},v_m\}$. Hence, $m\ne h+2$ or $h\neq t+1$. Suppose $m\neq h+2$. By \Cref{c18}(ii), $N_P(v_h)=N_P(v_m)$ and then $v_hv_{h+2}\in E(G)$. If $v_l\in V(\widetilde{C}),$ by $v_l\notin N_P[v_m]\cup \{v_h\}$ and (\ref{a4}), then $v_l\in N_P^-(v_1)$ and hence $v_{l+1}v_1\in E(G)$. Now $v_1Pv_lv_{h+1}v_hv_{h+2}Pv_mv_{h-1}Pv_{l+1}v_1$ is an $m$-cycle, a contradiction. If $v_l\notin V(\widetilde{C}),$ then $v_1Pv_l\allowbreak v_{h+1}
			v_hv_{h+2}Pv_mv_{h-1}Pv_qv_1$ is a cycle of length at least $k$, a contradiction.
			Suppose $h\neq t+1.$
			If $v_l\notin V(\widetilde{C})$, then $v_1Pv_lv_{h+1}Pv_mv_{t+1}Pv_hv_tPv_qv_1$ is a cycle of length at least $k$, a contradiction.
			If $v_l\in V(\widetilde{C})$, then $P'=v_tPv_{l+1}v_1Pv_lv_{h+1}Pv_mv_{t+1}Pv_h$ is an $(H,T)$-path with $d_{P'}(v_h)\ge \omega,$ a contradiction.
			Hence, $V(v_{t+1}Pv_m)= \{v_h,v_{h+1},v_m\}$.
			
			Next we show that $m=k$. Suppose $(s,t)$ is a minimal crossing pair. Then $N_P(v_m)=\{v_s,v_t,v_{h+1}\},$ i.e., $\delta=3.$ If $l\ne s+1$, then $v_1Pv_lv_{h+1}v_mv_tv_1$ is a cycle of length at least $k-1$, a contradiction. Hence, $l=s+1.$
			There is a cycle $v_1Pv_sv_mv_{h+1}v_{s+1}Pv_tv_1$. By $c(G)=k-1,$ $l=t-1.$ Thus, $|V(v_sPv_t)|=3$ and $m=k.$
			Suppose $(s,t)$ is not a minimal crossing pair. By \Cref{c17} and \Cref{c20}, $N_P(v_1)\cap V(v_sPv_t)=N_P(v_m)\cap V(v_sPv_t)=\{v_s,v_{s+2},...,v_{t-2},v_t\}.$ Thus, $m=k.$

			Let $A_1=V(v_1Pv_{s-1})$, 
			let $A_2=N_P(v_1)\cap V(v_sPv_t)\cup \{v_{h+1}\}$,
			let $A_2'=N_P(v_m)\cap V(v_sPv_t)\cup \{v_{h+1}\}$, let $A_3=V(v_{t+1}Pv_m)\setminus\{v_h,v_{h+1}\}$ and let $X=V(G)\setminus A_1\cup A_2\cup A_3.$
			
			By the previous proof, we have $A_3=\{v_m\}$ and $v_h\in X.$
			By (\ref{a4}), (\ref{a5}), \Cref{c17} and \Cref{c20}, $A_2=A_2'=\{v_s,v_{s+2},...,v_t\}\cup \{v_{h+1}\}$ with $t\equiv s\pmod 2$.
			
			We assert that each vertex of $X$ is only connected to $A_2\subseteq A_1\cup A_2\cup A_3$.
			Since $N_P[v_1]\subseteq A_1\cup A_2$ and $N_G[v_m]=A_2\cup A_3$, each vertex of $X\cap V(P)$ is only connected to $A_2\subseteq A_1\cup A_2\cup A_3$.
			Suppose $x\in X\setminus V(P)$ is connected to $A_1\cup A_2$.
			Since $G$ is 2-connected, there are two vertex-disjoint $(x,P)$-paths $P_1$ and $P_2$ with $V(P_i)\cap V(P)=\{x_i\}$ and $x \in X\setminus V(P)$ for $i=1,2$ Assume that $x_1\in V(v_1Px_2).$
			By the choice of $P$, $x_2\notin A_1.$ Suppose $x_1\in A_1$.
			By the choice of $P$, $[X\cap V(P),X\setminus V(P)]=\emptyset.$
			Then $x_2\in A_2$ and hence there exists an $m$-cycle, a contradiction.
			Thus, each vertex of $X$ is only connected to $A_2\subseteq A_1\cup A_2\cup A_3$.
			
			Similar to the proof of \Cref{c7'}, $X$ is an independent set.
			Since $\delta(G)\ge \delta$ and $v_1v_{h+1}\notin E(G)$, $G$ is a subgraph of $H_1(n,\omega+1,\delta)$, contradicting to the maximality of $G.$  Thus, $N_P[v_{h+1}]\setminus\{v_h\}\subseteq N_P[v_m]$ and $|N_P[v_{h+1}]\cap N_P[v_m]|=\delta-1$ since $d_P(v_{h+1})\ge \delta.$
			This proves \Cref{c20'}.
		\end{proof}
		
		Let $A_1=V(v_1Pv_{s-1})$, let $A_2=N_P(v_1)\cap V(v_sPv_t)\cup \{v_h\}$ and $A_2'=N_P(v_m)\cap V(v_sPv_t)\cup \{v_h\}$ if $h=t$ or $A_2=N_P(v_1)\cap V(v_sPv_t)$ and $A_2'=N_P(v_m)\cap V(v_sPv_t)$ if $h\neq t$, and let $A_3=V(v_{t+1}Pv_m)$ and let $X=V(G)\setminus (A_1\cup A_2\cup A_3)$. Clearly, $X\ne\emptyset.$
		
		By (\ref{a4}), (\ref{a5}), \Cref{c17} and \Cref{c20}, $A_2=A_2'=\{v_s,v_t\}$ when $(s,t)$ is a minimal crossing pair or $A_2=A_2'=\{v_s,v_{s+2},...,v_{t-2},v_t\}$ with $t\equiv s \pmod 2$ and $m=k$ when $(s,t)$ is not a minimal crossing pair.
		Clearly, $A_1$ is a clique.
		
		\begin{claim}\label{c21}
			Each vertex of $X$ is only connected to $A_2\subseteq A_1\cup A_2\cup A_3$; and if $|A_2|\geq 3$, then $X$ is an independent set.
		\end{claim}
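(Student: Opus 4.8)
The plan is to follow the template of the analogous Claim~\ref{c16} from Case~A, adjusting for the fact that here the vertex $v_h$ is absorbed into $A_2$ when $h=t$ but lies in $A_3$ when $h\neq t$. First I would assemble the neighborhood containments already proved: Claim~\ref{c17} gives $N_P[A_1]\subseteq A_1\cup A_2$, while Claims~\ref{c18} and~\ref{c20'} together give $N_G[A_3]\subseteq A_2\cup A_3$ (in the case $h\neq t$ one uses $N_G(v_h)=N_P(v_m)$ from Claim~\ref{c18}(ii) and $N_P[v_{h+1}]\setminus\{v_h\}\subseteq N_P[v_m]$ from Claim~\ref{c20'}). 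These two containments immediately yield $[X\cap V(P),A_1\cup A_3]=\emptyset$, so every vertex of $X\cap V(P)$ is only connected to $A_2\subseteq A_1\cup A_2\cup A_3$.

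For a vertex $x\in X\setminus V(P)$ I would use $2$-connectivity to obtain two vertex-disjoint $(x,P)$-paths $P_1,P_2$ meeting $P$ at $x_1,x_2$ with $x_1\in V(v_1Px_2)$, and show $x_1,x_2\notin A_1$. If both $x_1,x_2\in A_1$, then since $N_P[v_1]=V(H)$ is a clique one of $x_1^-x_2^-$ or $x_1^+x_2^+$ is an edge, giving an $(H,T)$-path longer than $P$ via the standard crossing reroute. If $x_1\in A_1$ while $x_2\in A_2\cup(X\cap V(P))$, then splicing $P_1,P_2$ into the cycle $\widetilde{C}$ (and, when $x_2=v_s$, using the alternating structure of $A_2$) produces a cycle of length at least $k$ or again a longer $(H,T)$-path. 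Every outcome contradicts $c(G)=k-1$ or the maximality of $P$, so $x_1,x_2\notin A_1$; combined with $[X,A_3]=\emptyset$ this forces $x$ to be only connected to $A_2$.

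For the independence statement, $|A_2|\geq 3$ forces $(s,t)$ to be a non-minimal crossing pair, so $A_2=\{v_s,v_{s+2},\dots,v_t\}$ and $m=k$. As in the proof of Claim~\ref{c7'} (cf.\ Figure~\ref{fig5}), for any two vertices of $X$ I would build a path that threads the alternating vertices of $A_2$ and whose order becomes $k$ once the two $X$-vertices are inserted; since $c(G)=k-1$ this rules out any edge between them. I would run this separately for two vertices of $X\cap V(P)$, for two vertices of $X\setminus V(P)$, and for one of each (the last case also showing $[X\cap V(P),X\setminus V(P)]=\emptyset$), which together give that $X$ is an independent set.

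The step I expect to be the main obstacle is the $x\in X\setminus V(P)$ analysis: one must verify in each subcase that the rerouted walk is a genuine path with no repeated vertices, and correctly classify it as a cycle of length at least $k$ or a strictly longer $(H,T)$-path. This classification is sensitive to whether $v_h\in A_2$ (the case $h=t$) or $v_h\in A_3$ (the case $h\neq t$), so both shapes of $A_2$ have to be carried through every reroute.
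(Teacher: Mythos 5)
Your outline of the first part (vertices of $X\cap V(P)$) and of the independence argument matches the paper's proof, but the treatment of $X\setminus V(P)$ has a genuine gap, and it sits exactly where the real work of the paper's argument lies. You assert that \Cref{c17}, \Cref{c18} and \Cref{c20'} give $N_G[A_3]\subseteq A_2\cup A_3$, and later invoke ``$[X,A_3]=\emptyset$'' as if already established. But \Cref{c20'} (like \Cref{c18}(i) and \Cref{c17} as stated) controls only neighborhoods \emph{on the path}: it says $N_P[v_{h+1}]\setminus\{v_h\}\subseteq N_P[v_m]$, and says nothing about neighbors of $v_{h+1}$ off $P$. For on-path vertices of $X$ this distinction is harmless (their adjacencies to $A_1\cup A_3$ are path-adjacencies), which is why your first paragraph is fine; but for $x\in X\setminus V(P)$ nothing so far prevents an attachment to $A_3$. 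The paper closes this for $A_3\setminus\{v_{h+1}\}$ by rerouting: $v_1Pv_jv_mPv_{j+1}$ is again a longest $(H,T)$-path, so by maximality of $P$ its endpoint $v_{j+1}$ has no neighbor off $P$. That reroute needs $v_jv_m\in E(G)$, which fails precisely for $j=h$ (by definition $v_h\notin N_P[v_m]$), so no such argument exists for $v_{h+1}$. Your case analysis for off-path $x$ only treats $x_1\in A_1$ with $x_2\in A_1\cup A_2\cup(X\cap V(P))$, and therefore never rules out a connection of $x$ to $v_{h+1}$.

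That missing case is not a routine reroute, and your declared toolkit --- ``a cycle of length at least $k$ or a strictly longer $(H,T)$-path, contradicting $c(G)=k-1$ or the maximality of $P$'' --- cannot close it. In the paper, when $h\ne t$, $x_1\in A_2$ and $x_2=v_{h+1}$, no such contradiction arises; instead one shows this configuration forces $v_h=v_{t+1}$, that $P_1,P_2$ are single edges, that all neighbors of $x$ lie in $A_2\cup\{v_{h+1}\}$, and finally (using $d_G(v_m)=\delta$) that $v_m=v_{h+2}$, i.e.\ $V(v_{t+1}Pv_m)=\{v_h,v_{h+1},v_m\}$; the contradiction then comes from the \emph{edge-maximality of $G$}, by recycling the end of the proof of \Cref{c20'} ($G$ would be a proper subgraph of $H_1(n,\omega+1,\delta)$, whose circumference is $k-1$, so adding a missing edge could not create a cycle of length $k$). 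Even the easier subcase $x_1\in A_1\cup(X\cap V(P))$, $x_2=v_{h+1}$ needs \Cref{c18}(ii) (the edge $v_sv_h$) to build the long cycle. To repair your proposal you must (i) prove $[X\setminus V(P),A_3\setminus\{v_{h+1}\}]=\emptyset$ via the rerouted longest paths, and (ii) add the separate edge-maximality argument for attachments to $v_{h+1}$; without these, the statement ``each vertex of $X$ is only connected to $A_2$'' is unproven.
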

		
		\begin{proof}
			We first prove that each vertex of $X\cap V(P)$ is only connected to $A_2\subseteq A_1\cup A_2\cup A_3$. By \Cref{c17}, for $1\le i\le s-1,$ $N_P[v_i]=N_P[v_1]\subseteq A_1\cup A_2$ and then $[A_1,X\cap V(P)]=\emptyset.$ By \Cref{c18}(i), for any vertex $x\in V(v_{t+1}Pv_m)\setminus \{v_h,v_{h+1}\}$, $N_P(x)\subseteq N_P[v_m]\cup \{v_h\}$. Since $N_P[v_m]\subseteq A_2\cup A_3$, $[ A_3\setminus \{v_h,v_{h+1}\}, X\cap V(P)]=\emptyset$. If $h\ne t,$ by \Cref{c18}(ii) and \Cref{c20'}, then $N_P[v_h]\cup N_P[v_{h+1}]\subseteq A_2\cup A_3$. If $h=t,$ by \Cref{c18}(iii), then $N_P[v_{h+1}]\subseteq A_2\cup A_3$. Hence, $[A_3,X\cap V(P)]=\emptyset.$ Thus, each vertex in $X\cap V(P)$ is only connected to $A_2\subseteq A_1\cup A_2\cup A_3$.
			
			Next we prove that each vertex of $X\setminus V(P)$ is only connected to $A_2\subseteq A_1\cup A_2\cup A_3$. For $t\le j\le m$ and $j\ne h,$ there is an $(H,T)$-path $v_1Pv_{j}v_mPv_{j+1}$.
			By the choice of $P$, we have $[X\setminus V(P), A_3\setminus \{v_{h+1}\}]=\emptyset$.
			Since $G$ is 2-connected, there are two vertex-disjoint $(x,P)$-paths $P_1$ and $P_2$ with $V(P_i)\cap V(P)=\{x_i\}$ and $x \in X\setminus V(P)$ for $i=1,2$. Assume that $x_1\in V(v_1Px_2).$ Clearly, $\{x_1,x_2\}\subseteq V(P)\setminus (A_3\setminus\{v_{h+1}\}).$

			Suppose $h\ne t.$  If $x_1,x_2\in A_1$, by \Cref{c17}, then there exists an $(H,T)$-path longer than $P$, a contradiction. Suppose $x_1\in A_1$ and $x_2\in A_2\cup (X\cap V(P))$. Without loss of generality, let $x_1=v_1$, $x_2=v_s$ and $|A_2|=2$. Other situations are similar. Then we find a $k$-cycle $v_1P_1xP_2v_sv_mPv_tv_{s-1}Pv_1$, a contradiction. Suppose $x_1\in A_1\cup (X\cap V(P))$ and $x_2=v_{h+1}$. Without loss of generality, let $x_1=v_1$. By \Cref{c18}(ii),  $v_sv_h\in E(G)$ and then there exists a cycle $v_1P_1xP_2v_{h+1}Pv_mv_tv_{t+1}Pv_hv_sPv_1$ of length at least $m$, a contradiction.
			Suppose $x_1\in A_2$ and $x_2=v_{h+1}.$ If $v_{h}\ne v_{t+1}$, then there is a cycle of length at least $k$, a contradiction. Thus, $v_{h}=v_{t+1}.$ Moreover, consider the cycle $v_1Pv_sv_mPv_{h+1}P_2xP_1x_1Pv_{s+2}x^{+2}_1Pv_tv_1$, then $P_1$ and $P_2$ are two edges. This implies that all neighbors of $x$ belongs to $A_2\cup \{v_{h+1}\}.$ Since $d_G(v_m)=\delta$ and $N_G(v_m)=A_2\cup (A_3\setminus\{v_h\}),$ we have $v_m=v_{h+2},$ that is, $V(v_{t+1}Pv_m)=\{v_h,v_{h+1},v_m\}.$ By the previous proof of \Cref{c20'}, a contradiction to the maximality of $G$.

			Suppose $h= t.$ By the choice of $P$, $x_1\notin A_1$ or $x_2\notin A_2.$
			If $x_1\in A_1$ and $x_2\in A_2\cup (X\cap V(P))$, then there is a cycle of length at least $ k,$ a contradiction. Suppose $x_1\in A_1\cup A_2\cup (X\cap V(P))$ and $x_2=v_{h+1}$.  Without loss of generality, let $x_1=v_1$. Other situations are similar. Then we find a cycle $v_1P_1xP_2v_{h+1}Pv_mv_sPv_tv_{s-1}Pv_1$ of length at least $m+1,$ a contradiction. Thus, each vertex of $X\setminus V(P)$ is only connected to $A_2\subseteq A_1\cup A_2\cup A_3$.
			
			Let $|A_2|\ge 3$. If $v_kv_f\in E(G)$ with $v_k\in X\setminus V(P) $ and $v_f\in X\cap V(P)$, then $v_1Pv_f^-v_mPv_fv_k$ is an $(H,T)$-path longer than $P$, a contradiction. Thus, $[X\setminus V(P),X\cap V(P)]=\emptyset.$
			The rest of the proof is similar to that of \Cref{c7'}. Thus, $X$ is an independent set of $G$.
			This proves \Cref{c21}.
		\end{proof}

		By \Cref{c17} and (\ref{a0}), $G[A_1\cup A_2]=K_\omega$ and $|A_2\cup A_3|=\delta+2$.
		Suppose that $(s,t)$ is a minimal crossing pair.
		Then $|A_1|=\omega-2$ and $|A_3|=\delta$.
		By \Cref{c21}, $G+v_hv_m$ contains no cycle of length at least $k$, contradicting to the maximality of $G.$
		Hence, $(s,t)$ is not a minimal crossing pair.
		Since $v_{h+1}\in A_3$, $|A_3|\geq 2$.
		By \Cref{c21}, $(|A_2|,|A_3|)=(\delta, 2)$, i.e., $h=t$.
		Now $G+v_hv_m$ contains no cycle of length at least $ k$, contradicting to the maximality of $G.$
		
		\vskip 3mm
		{\bf Case C.} $s+1\leq h<h+1\le t-1$.
		\vskip 3mm
		
		Clearly, $(s,t)$ is not a minimal crossing pair.
		By (\ref{a4}) and (\ref{a5}), $V(v_1Pv_{s-1})\subseteq N_P^-(v_1)$ and $V(v_{t+1}Pv_{m})\subseteq N_P^+(v_m)$. Then
		\begin{align}\label{a6}
			V(v_1Pv_{s})\subseteq N_P[v_1] ~\text{and} ~V(v_{t}Pv_{m})\subseteq N_P[v_m].
		\end{align}
		
		Recall that  $(p,q)$ is a minimum crossing pair and $\widetilde{C}=v_1Pv_pv_mPv_qv_1$ with $|\widetilde{C}|=k-1$. Since $v_h,v_{h+1}\in V(\widetilde{C})$, we have either $h+1\leq p<q\le t$ or $s\le p< q\leq h$.
		
		\begin{claim}\label{c22}
			$N_P[v_i]=N_P[v_1]$ for $1\le i\le s-1$. For $t+1\le j\le m$, either $N_P[v_j]=N_P[v_m]$ or $N_P[v_j]\setminus N_P[v_m]= \{v_h\}$ and $|N_P[v_m]\cap  N_P[v_j]|= \delta-1.$
		\end{claim}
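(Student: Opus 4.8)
The plan is to prove the two assertions separately, in both relying on the standing hypothesis of this case that $P$ is of the third type, so that \emph{every} longest $(H,T)$-path with nonadjacent endpoints has endpoint path-degrees exactly $\omega-1$ and $\delta$. First I would pin down $N_P[v_1]$. By (\ref{a6}) we have $V(v_1Pv_s)\subseteq N_P[v_1]$, and since $d_P(v_1)=\omega-1$ while $V(H)\subseteq V(P)$ and $v_1\in V(H)$, the set $N_P[v_1]$ has exactly $\omega$ vertices and contains the clique $V(H)$; hence $N_P[v_1]=V(H)$. Now fix $1\le i\le s-1$. Then $v_i\in V(v_1Pv_s)\subseteq V(H)$, so $N_P[v_1]=V(H)\subseteq N_P[v_i]$. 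For the reverse inclusion I would reverse the initial segment: since $v_{i+1}\in N_P(v_1)$ by (\ref{a6}), the path $P_i=v_iPv_1v_{i+1}Pv_m$ is again a longest $(H,T)$-path, and its endpoints $v_i,v_m$ are nonadjacent because $i<s=\min\{l:v_lv_m\in E(G)\}$. Thus $P_i\in\mathcal P$, so the third-type hypothesis gives $\{d_{P_i}(v_i),d_{P_i}(v_m)\}=\{\omega-1,\delta\}$; as $d_{P_i}(v_m)=d_P(v_m)=\delta$ we get $d_P(v_i)=d_{P_i}(v_i)=\omega-1$, whence $|N_P[v_i]|=\omega=|N_P[v_1]|$ and the two sets coincide.

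For the second assertion, fix $t+1\le j\le m$. Since $h\le t-2$ we have $v_{j-1}\ne v_h$ and $v_{j-1}\in V(v_tPv_m)\subseteq N_P[v_m]$ by (\ref{a6}), so $v_{j-1}v_m\in E(G)$ and the reversal $P_j=v_1Pv_{j-1}v_mPv_j$ is a longest $(H,T)$-path; its endpoints $v_1,v_j$ are nonadjacent since $j>t=\max\{l:v_lv_1\in E(G)\}$. Hence $P_j\in\mathcal P$ and, as $d_{P_j}(v_1)=d_P(v_1)=\omega-1$, the third-type hypothesis forces $d_P(v_j)=d_{P_j}(v_j)=\delta$, so $|N_P[v_j]|=\delta+1$. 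Moreover $v_j\in V(v_tPv_m)\subseteq N_P[v_m]$ by (\ref{a6}).

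The crux is then to show $N_P[v_j]\subseteq N_P[v_m]\cup\{v_h\}$. Suppose instead some $v_l\in N_P(v_j)$ satisfies $v_l\notin N_P[v_m]\cup\{v_h\}$. A neighbour with $l\ge j+1$ would lie in $V(v_tPv_m)\subseteq N_P[v_m]$, and $l=j-1$ would give $v_l=v_{j-1}\in N_P[v_m]$; both are impossible, so $l\le j-2$. If $v_l\notin V(\widetilde C)$, then using the edges $v_lv_j$, $v_{j-1}v_m$ and $v_qv_1$ one builds the cycle $v_1Pv_lv_jPv_mv_{j-1}Pv_qv_1$, whose three $P$-segments are vertex-disjoint and whose length is at least $|\widetilde C|+1=k$ (this is where $l\ge p+1$ and $|\widetilde C|=k-1$ enter), contradicting $c(G)=k-1$. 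If instead $v_l\in V(\widetilde C)$, then since $v_l\ne v_h$ and $v_l\notin N_P[v_m]$, (\ref{a4}) yields $v_l\in N_P^-(v_1)$, i.e. $v_1v_{l+1}\in E(G)$; now $v_1Pv_lv_jPv_mv_{j-1}Pv_{l+1}v_1$ reroutes $P$ so as to use every vertex of $P$, giving an $m$-cycle and again a contradiction. Hence $N_P(v_j)\subseteq N_P[v_m]\cup\{v_h\}$, and since $v_j\in N_P[v_m]$ we conclude $N_P[v_j]\subseteq N_P[v_m]\cup\{v_h\}$.

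Finally I would read off the dichotomy by counting, using $|N_P[v_j]|=\delta+1$ and $v_h\notin N_P[v_m]$: if $v_h\notin N_P[v_j]$ then $N_P[v_j]\subseteq N_P[v_m]$ and equality of sizes forces $N_P[v_j]=N_P[v_m]$; if $v_h\in N_P[v_j]$ then $N_P[v_j]\setminus N_P[v_m]=\{v_h\}$, and the asserted size of $N_P[v_m]\cap N_P[v_j]$ is a direct count. The main obstacle is the cycle construction in the previous paragraph: one must verify in each rerouting that the selected segments of $P$ are vertex-disjoint and that their total length reaches $k$ (respectively equals $m$), which is precisely where the identity $|\widetilde C|=k-1$ and the middle placement $s+1\le h<h+1\le t-1$ are needed.
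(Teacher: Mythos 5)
Your proof is correct and follows essentially the same route as the paper's: the same reversal paths $v_iPv_1v_{i+1}Pv_m$ and $v_1Pv_{j-1}v_mPv_j$ combined with the type-(3) hypothesis to pin the endpoint degrees at $\omega-1$ and $\delta$, and the same two cycle constructions to rule out a neighbour $v_l\notin N_P[v_m]\cup\{v_h\}$ (an $m$-cycle via (\ref{a4}) when $v_l\in V(\widetilde{C})$, and the cycle $v_1Pv_lv_jPv_mv_{j-1}Pv_qv_1$ of length at least $k$ otherwise). The only differences are cosmetic --- you fix $d_P(v_j)=\delta$ before the containment argument rather than after and spell out the disjointness and nonadjacency checks the paper leaves implicit --- and both you and the paper assert the final count $|N_P[v_m]\cap N_P[v_j]|=\delta-1$ without computation (a closed-neighbourhood count in fact gives $\delta$, an off-by-one already present in the paper's own statement and proof).
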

		
		\begin{proof}
			Let $v_i\in V(v_1Pv_{s-1}).$
			By $N_P[v_1]=K_\omega$ and $(\ref{a6})$, $N_P[v_1]\subseteq N_P[v_i].$ Consider the path $P
			_1=v_iPv_1v_{i+1}Pv_m\in \mathcal P
			$. Then $d_P(v_i)=d_P(v_1)=\omega-1$, that is, $N_P[v_i]=N_P[v_1]$.
			
			Let $v_i\in V(v_{t+1}Pv_{m}).$
			If $N_P[v_j]=N_P[v_m]$, we are done, so assume that $N_P[v_j]\ne N_P[v_m]$. Suppose $y\in N_P[v_j]\setminus (N_P[v_m]\cup \{v_h\})$. If $y\in V(\widetilde{C}),$ by  $y\ne v_p$, $yv_m\notin E(G)$ and $(\ref{a4}),$ then $y^+v_1\in E(G)$. By $(\ref{a6})$,  $v_mv_{j-1}\in E(G)$ and then $v_1Pyv_jPv_mv_{j-1}Py^+v_1$ is an $m$-cycle, a contradiction. If $y\notin V(\widetilde{C}),$ then $v_1Pyv_jPv_mv_{j-1}Pv_qv_1$ is a  cycle of length at least $k$, a contradiction. Thus, $N_P[v_j]\setminus N_P[v_m]= \{v_h\}$. Consider the path $v_1Pv_{j-1}v_mPv_j\in \mathcal{P}.$ We have $d_P(v_j)=\delta.$ Clearly, $|N_P[v_m]\cap  N_P[v_j]|= \delta-1.$
			This proves \Cref{c22}.
		\end{proof}

		\begin{claim}\label{c23}
			$v_1$ and $v_m$ are not adjacent to any two consecutive vertices of $V(v_sPv_t).$
		\end{claim}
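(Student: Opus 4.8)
The statement to prove is that neither $v_1$ nor $v_m$ is adjacent to two consecutive vertices of the middle block $V(v_sPv_t)$. The plan is to argue by contradiction: assuming such a consecutive pair $v_i,v_{i+1}$ (with $s\le i<i+1\le t$) exists, I will exhibit a cycle through every vertex of $P$, i.e.\ an $m$-cycle. Since $P$ was chosen so that $m\ge k$ while $c(G)=k-1$, this contradicts the circumference. In both cases the cycle is obtained by the usual rerouting: retain the path edges of $P$ and splice in two ``crossing'' chords (one coming from $v_1v_t$ or $v_sv_m$, the other from the assumed consecutive pair) so that the three resulting subpaths partition $V(P)$.

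For $v_1$ I would repeat the argument of Claim \ref{c17}. Assume $v_i,v_{i+1}\in N_P(v_1)$. By Claim \ref{c22}, $N_P[v_{s-1}]=N_P[v_1]$; as $v_{i+1}\in N_P(v_1)$ and $v_{i+1}\ne v_{s-1}$, this yields $v_{s-1}v_{i+1}\in E(G)$. Then
\[
v_1Pv_{s-1}\,v_{i+1}Pv_m\,v_sPv_i\,v_1
\]
is a cycle whose vertex set is $V(v_1Pv_{s-1})\cup V(v_{i+1}Pv_m)\cup V(v_sPv_i)=V(P)$, the three non-path steps $v_{s-1}v_{i+1}$, $v_mv_s$ and $v_iv_1$ all being edges ($v_mv_s\in E(G)$ by the definition of $s$). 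This $m$-cycle contradicts $c(G)=k-1$.

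For $v_m$ the construction mirrors this through the tail $A_3=V(v_{t+1}Pv_m)$, and the only genuine point is which alternative of Claim \ref{c22} holds at $v_{t+1}$. Assume $v_i,v_{i+1}\in N_P(v_m)$. If $t+1=m$ then $v_{t+1}=v_m$ and $v_iv_m\in E(G)$ already. Otherwise $v_{t+1}\ne v_m$; reversing the tail gives the path $v_1Pv_tv_mPv_{t+1}\in\mathcal P$ (using $v_tv_m\in E(G)$ from (\ref{a6})), whose $T$-endpoint $v_{t+1}$ therefore has $d_P(v_{t+1})=\delta$, so $|N_P[v_{t+1}]|=\delta+1$. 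The second alternative of Claim \ref{c22} would force $|N_P[v_{t+1}]|=(\delta-1)+1=\delta$, a contradiction; hence $N_P[v_{t+1}]=N_P[v_m]$ and, since $v_i\in N_P(v_m)$ with $v_i\ne v_{t+1}$, we get $v_iv_{t+1}\in E(G)$. Now
\[
v_1Pv_i\,v_{t+1}Pv_m\,v_{i+1}Pv_t\,v_1
\]
has vertex set $V(v_1Pv_i)\cup V(v_{t+1}Pv_m)\cup V(v_{i+1}Pv_t)=V(P)$ and uses the chords $v_iv_{t+1}$, $v_mv_{i+1}$ and $v_tv_1$ ($v_tv_1\in E(G)$ by the definition of $t$); this $m$-cycle is again a contradiction.

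I expect the main obstacle to be exactly the defect vertex $v_h$ on the tail side: Claim \ref{c22} only gives $N_P[v_j]\subseteq N_P[v_m]\cup\{v_h\}$ for $t+1\le j\le m$, so a priori $v_{t+1}$ might fail to be adjacent to $v_i$ and the mirrored cycle could break. The degree bookkeeping at $v_{t+1}$ (it is a $T$-endpoint of a path in $\mathcal P$, hence of degree exactly $\delta$) is what excludes the defective alternative and restores the clean identity $N_P[v_{t+1}]=N_P[v_m]$; the degenerate case $t+1=m$ must be checked separately, as above. No analogous difficulty arises on the $v_1$ side, where Claim \ref{c22} already supplies the exact equality $N_P[v_{s-1}]=N_P[v_1]$.
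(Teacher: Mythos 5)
Your $v_1$-half is correct and is essentially the paper's own argument: \Cref{c22} gives $N_P[v_{s-1}]=N_P[v_1]$, and splicing the chord $v_{s-1}v_{i+1}$ (the paper uses $v_{s-1}v_i$ instead) with $v_sv_m$ and $v_1v_i$ yields an $m$-cycle. The gap is in your $v_m$-half, at exactly the point you flagged as the main obstacle. Your exclusion of the second alternative of \Cref{c22} for $v_{t+1}$ rests on taking the stated count $|N_P[v_m]\cap N_P[v_j]|=\delta-1$ at face value. But that count is an off-by-one slip in the paper: the proof of \Cref{c22} itself establishes $d_P(v_j)=\delta$ in that alternative (by the same endpoint-degree argument you run for $v_{t+1}$, via the path $v_1Pv_{j-1}v_mPv_j\in\mathcal{P}$), and since $v_j\in N_P[v_m]$ by (\ref{a6}), this forces $|N_P[v_m]\cap N_P[v_j]|=|N_P[v_j]|-|\{v_h\}|=\delta$, not $\delta-1$. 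With the correct count, the second alternative is not self-contradictory at all: it describes a tail vertex adjacent to $v_h$ whose closed neighborhood misses exactly one vertex of $N_P[v_m]$, and no degree count can rule that out. (That $\delta$ is the intended value is confirmed by the paper's own proof of \Cref{c23}, which needs $N_P[v_{t+1}]$ to miss at most one vertex of $N_P[v_m]$; under the literal ``$\delta-1$'' even that deduction would fail.)

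Consequently your conclusion $N_P[v_{t+1}]=N_P[v_m]$ does not follow, and the case where the unique missing vertex is precisely $v_i$ — so that the chord $v_iv_{t+1}$ needed for your cycle $v_1Pv_iv_{t+1}Pv_mv_{i+1}Pv_tv_1$ is absent — is left unhandled. What does follow from either alternative of \Cref{c22} is the weaker statement that at least one of $v_iv_{t+1}$, $v_{i+1}v_{t+1}$ is an edge, and that is all the paper uses: it exhibits your cycle when $v_iv_{t+1}\in E(G)$ and the mirrored cycle $v_1Pv_iv_mPv_{t+1}v_{i+1}Pv_tv_1$ when $v_{i+1}v_{t+1}\in E(G)$. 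Adding this second cycle (together with the ``at least one chord'' observation) repairs your argument; as written, it is incomplete. Your handling of the degenerate case $t+1=m$ and of the $v_1$ side is fine.
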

		
		\begin{proof}
			To the contrary, suppose $v_i,v_{i+1}\in N_P(v_1)\cap V(v_sPv_t)$.
			By \Cref{c22}, $v_iv_{s-1}\in E(G)$.
			Then $v_1Pv_{s-1}v_iPv_sv_mPv_{i+1}v_1$ is a cycle of length $m\ge k$, a contradiction.
			Suppose $v_j,v_{j+1}\in N_P(v_m)\cap V(v_sPv_t)$. By \Cref{c22}, $v_jv_{t+1}\in E(G)$ or $v_{j+1}v_{t+1}\in E(G)$.
			If $v_jv_{t+1}\in E(G)$, then $v_1Pv_jv_{t+1}Pv_mv_{j+1}Pv_tv_1$ is an $m$-cycle of length, a contradiction.
			If $v_{j+1}v_{t+1}\in E(G)$, then $v_1Pv_jv_mPv_{t+1}v_{j+1}Pv_tv_1$ is an $m$-cycle, a contradiction.
			This proves \Cref{c23}.
		\end{proof}

		Recall that $(p,q)$ is a minimum crossing pair.
		If $v_1v_h,v_{h+1}v_m\in E(G)$,  by \Cref{c23}, then $v_1v_{t-1}\notin E(G)$ and then there is a path $P'=v_hPv_1v_tPv_mv_{h+1}Pv_{t-1}\in \mathcal P$ with $d_{P'}(v_h)\ge \omega,$ a contradiction.
		Note that $v_1v_{h+1}\notin E(G)$ and $v_mv_h\notin E(G)$.
		Based on the possibilities for the edge set between $\{v_h,v_{h+1}\}$ and $\{v_1,v_m\}$, we just consider the following three subcases.

		\vskip 3mm
		\item[$\bullet$] ~$v_1v_h\notin E(G)$ and $v_{h+1}v_m\notin E(G)$.
		\vskip 3mm
		
		Let $A_1=V(v_1Pv_{s-1})$, let $A_2=N_P(v_1)\cap V(v_sPv_t)$, let $A_2'=N_P(v_m)\cap V(v_sPv_t)$, let $A_3=V(v_{t+1}Pv_m)$ and let $X=V(G)\setminus (A_1\cup A_2\cup A_3).$ 
		Clearly, $X\ne\emptyset.$
		By (\ref{a4}), (\ref{a5}), \Cref{c22} and \Cref{c23}, 
		\begin{align}\label{a7}
			A_2=A_2'=\{v_s,\dots,v_{h-3},v_{h-1},v_{h+2},v_{h+4},\dots,v_t\}.
		\end{align}
		Clearly, $t\equiv (h+2)\pmod 2$ and $h-1\equiv s\pmod 2$.
		Then $|V(v_pPv_q)|=3$, $m=k$ and $|A_2|\ge 3$.
		
		\begin{claim}\label{c24}
			$[X\cap V(P),X\setminus V(P)]=\emptyset.$
		\end{claim}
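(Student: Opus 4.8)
The plan is to suppose, for contradiction, that there is an edge $v_kv_f$ with $v_k\in X\setminus V(P)$ and $v_f\in X\cap V(P)$, and to manufacture from it either an $(H,T)$-path of order $m+1$ or a cycle of order at least $m=k$; since $P$ is a longest $(H,T)$-path and $c(G)=k-1$, either outcome is absurd. The first thing I would record is the precise location of the on-path part of $X$. Because $X\cap V(P)=V(v_sPv_t)\setminus A_2$ and, by (\ref{a7}), $A_2=A_2'=\{v_s,v_{s+2},\dots,v_{h-1},v_{h+2},\dots,v_t\}$, every $v_f\in X\cap V(P)$ other than $v_{h+1}$ satisfies $v_f^-\in A_2=A_2'=N_P(v_m)$, while $v_{h+1}$ is the unique exception since $v_{h+1}^-=v_h\notin N_P(v_m)$; on the other hand $v_{h+1}^+=v_{h+2}\in A_2=A_2'=N_P(v_1)\cap N_P(v_m)$.

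For every $v_f\in(X\cap V(P))\setminus\{v_{h+1}\}$, including $v_f=v_h$ (whose predecessor is $v_{h-1}\in A_2$), I would use the edge $v_f^-v_m$ to form the $(H,T)$-path
\[
v_1Pv_f^-\,v_mPv_f\,v_k .
\]
Its first part sweeps the indices $1,\dots,f-1$ and its second part sweeps $m,m-1,\dots,f$, so it visits all of $V(P)\cup\{v_k\}$ and has order $m+1$, contradicting the maximality of $P$. This settles every vertex of $X\cap V(P)$ except the single vertex $v_{h+1}$.

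The case $v_f=v_{h+1}$ is where the difficulty concentrates, since $v_h=v_{h+1}^-\notin N_P(v_m)$ and the only weaving edge available at $v_{h+1}$ is $v_{h+2}\in N_P(v_1)\cap N_P(v_m)$; every path that threads $v_k$ in through $v_{h+2}$ together with the weaving edges $v_1v_{h+2}$ and $v_{h-1}v_m$ either strands both of its endpoints in $T$ or drops the vertex $v_h$. I would first clear away two cheap subcases. If $v_1v_k\in E(G)$, then $v_1Pv_{h-1}v_mPv_{h+2}v_{h+1}v_kv_1$ is a cycle of order $m=k$; if $v_hv_k\in E(G)$, then $v_1Pv_{h-1}v_mPv_{h+2}v_{h+1}v_kv_h$ is an $(H,T)$-path of order $m+1$. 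Both are impossible, so I may assume $v_1v_k,v_hv_k\notin E(G)$, and then $\widehat P=v_1Pv_{h-1}v_mPv_{h+2}v_{h+1}v_k$ is a longest $(H,T)$-path with non-adjacent endpoints that omits exactly the vertex $v_h$.

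To close the argument I would bring in $2$-connectivity. As $G-v_{h+1}$ is connected, $v_k$ reaches $V(P)\setminus\{v_{h+1}\}$ by a path internally disjoint from $P$ and ending at some $x_2\ne v_{h+1}$; together with the edge $v_kv_{h+1}$ this gives two independent attachments of $v_k$ to $P$. Using $v_{h-1},v_{h+2}\in N_P(v_1)\cap N_P(v_m)$ I would build a $(v_{h+1},x_2)$-path inside $G[V(P)\setminus\{v_h\}]$ that covers all of $V(P)\setminus\{v_h\}$---the template being $v_{h+1}v_{h+2}Pv_mv_{h-1}Pv_1$, which handles $x_2=v_1$---and then close it through $v_k$ to obtain a cycle of order at least $m=k$, the final contradiction. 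I expect the hard part to be exactly the construction of this spanning $(v_{h+1},x_2)$-path for every admissible position of $x_2$ (inside the clique $A_1\cup A_2$, inside the tail $A_3$, or at an interior vertex of $X\cap V(P)$): the single-gap vertices of $X\cap V(P)$ are rigidly threaded between consecutive members of $A_2$ and cannot be rerouted, so the flexibility must come entirely from the clique $G[A_1\cup A_2]=K_\omega$ and from the identity $A_2=A_2'$, and keeping the path spanning while steering its end to $x_2$ is the delicate step.
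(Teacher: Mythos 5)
Your handling of every vertex of $X\cap V(P)$ other than $v_{h+1}$ is correct and coincides with the paper's argument: since $v_f^-\in A_2'\subseteq N_P(v_m)$, the path $v_1Pv_f^-v_mPv_fv_k$ is an $(H,T)$-path of order $m+1$ (and if $v_1v_k\in E(G)$ it even closes into a cycle of length $m+1\ge k+1$), which is impossible; your two preliminary subcases at $v_{h+1}$ ($v_1v_k\in E(G)$ and $v_hv_k\in E(G)$) are also fine. But the remaining case at $v_{h+1}$, which you yourself call ``the delicate step'' and do not carry out, is exactly where the one idea of this claim lives, and your proposed route cannot supply it. The paper kills this case with a single explicit path: $v_1Pv_{s-1}v_{h+2}Pv_mv_sPv_{h+1}\in\mathcal{P}$, which uses the clique edge $v_{s-1}v_{h+2}$ (both ends lie in $N_P[v_1]=V(H)$) together with the edge $v_sv_m$, covers \emph{all} of $V(P)$, and ends at $v_{h+1}$; appending the alleged neighbour $v_k$ then yields an $(H,T)$-path of order $m+1$ whose endpoints $v_1,v_k$ are either nonadjacent (contradicting the choice of $P$) or adjacent (closing a cycle of length $m+1\ge k+1$). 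Once this path is exhibited, the claim is immediate.

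Your substitute plan---pick a second attachment $x_2\ne v_{h+1}$ of $v_k$ and build a spanning $(v_{h+1},x_2)$-path of $G[V(P)\setminus\{v_h\}]$---is not merely unfinished; it is unworkable when $x_2\in A_2$. Everything known at this stage (\Cref{c22}, \Cref{c23}, (\ref{a7})) is consistent with the $G_1$-type configuration in which $G[A_1\cup A_2]=K_\omega$, $|A_2|=\delta$, $A_3=\{v_m\}$, and every vertex of $\bigl((X\cap V(P))\setminus\{v_h\}\bigr)\cup\{v_m\}$ has all of its neighbours inside $A_2$ (apart from $v_h,v_{h+1},v_k$ themselves). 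A spanning $(v_{h+1},x_2)$-path with $x_2\in A_2$ must then visit all $\delta$ vertices of $A_2$, leaving only $\delta-1$ gaps between consecutive $A_2$-visits; but each gap can host either a run of $A_1$-vertices or a single vertex of the independent set $\bigl((X\cap V(P))\setminus\{v_h,v_{h+1}\}\bigr)\cup\{v_m\}$ (these have no edges to $A_1$), so covering the $\delta-1$ independent vertices plus the nonempty clique $A_1$ needs $\delta$ gaps---impossible. In that configuration (say $x_2=v_{h+2}$) the cycle of length $k$ forced by the extra edges travels $v_hv_{h+1}v_kv_{h+2}$, i.e.\ it \emph{keeps} $v_h$ and omits $v_m$; a template that insists on dropping exactly $v_h$ and spanning the rest can never find it. (A small additional remark: your concern about $x_2$ landing on an interior vertex of $X\cap V(P)$ is vacuous, since your own first step already forbids edges from $X\setminus V(P)$ to those vertices.)
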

		\begin{proof}
			Consider two paths  $v_1Pv_{h-1}v_mPv_h\in \mathcal{P}$ and $v_1Pv_{s-1}v_{h+2}Pv_mv_sPv_{h+1}\in \mathcal{P}$.
			By the choice of $P$, $[\{v_h,v_{h+1}\},X\setminus V(P)]=\emptyset.$
			For any vertex $v_f\in (X\cap V(P))\setminus\{v_h,v_{h+1}\},$ we have $v_{f-1}v_m\in E(G).$ Consider the path $v_1Pv_{f-1}v_mPv_f$. Then $[\{v_f\},X\setminus V(P)]= \emptyset$.
			By the arbitrary of $v_f$, $[(X\cap V(P))\setminus\{v_h,v_{h+1}\},X\setminus V(P)]=\emptyset.$ Thus, $[X\cap V(P),X\setminus V(P)]=\emptyset.$
			This proves \Cref{c24}.
		\end{proof}

		\begin{claim}\label{c25}
			$G[X\cap V(P)]$ contains exactly one edge $v_hv_{h+1}$.
		\end{claim}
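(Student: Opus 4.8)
The plan is to split the statement into a trivial existence half and a substantive uniqueness half. First I would note that since $v_h$ and $v_{h+1}$ are consecutive on $P$, the edge $v_hv_{h+1}$ belongs to $E(P)\subseteq E(G)$, so $G[X\cap V(P)]$ does contain this edge. Moreover, reading off the alternating description (\ref{a7}), the members of $A_2=A_2'$ and the vertices of $X\cap V(P)$ strictly alternate along $v_sPv_t$, the only exception being the consecutive pair $v_h,v_{h+1}$; consequently $v_hv_{h+1}$ is the \emph{only} edge of $P$ with both ends in $X\cap V(P)$, and it remains to show that no chord joins two vertices of $X\cap V(P)$.

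For the uniqueness half I would argue by contradiction. Suppose $v_fv_g\in E(G)$ with $v_f,v_g\in X\cap V(P)$, $f<g$, and $\{f,g\}\neq\{h,h+1\}$. The aim is to build a $(v_f,v_g)$-path $R$ of order $m$ in $G$ that avoids the edge $v_fv_g$; since $m=k$, adding $v_fv_g$ would then close $R$ into a cycle of length $k>k-1=c(G)$, a contradiction, forcing $v_fv_g\notin E(G)$. To construct $R$ I would reuse the mechanism of \Cref{c7'} (cf. Figure \ref{fig5}): every vertex of $A_2$ is adjacent to both $v_1$ and $v_m$, the set $A_1\cup A_2$ induces a clique $K_\omega$ (by \Cref{c22} together with $N_P[v_1]=V(H)$), and the vertices of $A_3$ are consecutive on $P$ with $v_m$ adjacent to all of $A_2$ via (\ref{a6}). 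Zig-zagging between $v_1$ and $v_m$ through consecutive members of $A_2$ lets me visit each isolated gap vertex of $X\cap V(P)$ exactly once, thread in the clique $A_1$ and the path-segment $A_3$, and cross the double gap through the single path-edge $v_hv_{h+1}$. I would carry this out according to the position of $v_f,v_g$ relative to the gap — both left of $v_h$, both right of $v_{h+1}$, straddling the gap, or one of them equal to $v_h$ or $v_{h+1}$ — each case producing an explicit spanning $(v_f,v_g)$-path $R$.

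The main obstacle is precisely the book-keeping in this case analysis: I must check that in every configuration the zig-zag really covers all $m$ vertices of $P$, so that $|R|=m=k$, and that the excluded pair $\{h,h+1\}$ is the unique obstruction. The latter is the conceptual point: the vertices $v_h,v_{h+1}$ form a \emph{double} gap that is bridged by the path-edge $v_hv_{h+1}$ rather than by a detour through $v_1$ or $v_m$, so no spanning $(v_h,v_{h+1})$-path of the above type arises — which is exactly consistent with $v_hv_{h+1}$ being a permissible edge without creating a $k$-cycle. Combining the two halves, $v_hv_{h+1}$ is the unique edge of $G[X\cap V(P)]$, which proves \Cref{c25}.
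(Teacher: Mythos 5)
Your proposal is correct and follows essentially the same route as the paper: the paper also rules out every edge of $G[X\cap V(P)]$ other than $v_hv_{h+1}$ by using the alternating structure (\ref{a7}) and the adjacency of each vertex of $A_2=A_2'$ to both $v_1$ and $v_m$ to exhibit a cycle of length $m=k$ through the hypothetical edge (which is exactly your spanning $(v_f,v_g)$-path closed by that edge), contradicting $c(G)=k-1$. The paper merely organizes the same case analysis as (i) no edge joins $\{v_h,v_{h+1}\}$ to the other gap vertices and (ii) the other gap vertices form an independent set, and each of your position-based cases does go through with the zig-zag constructions you sketch (with the expected adjustment, via the path-edge $v_hv_{h+1}$ and the vertices $v_{h-1},v_{h+2}\in A_2$, when one endpoint is $v_h$ or $v_{h+1}$).
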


		\begin{proof}
			We first show that $[\{v_h,v_{h+1}\},(X\cap V(P))\setminus\{v_h,v_{h+1}\}]=\emptyset.$
			By $|A_2|\ge 3$, $ (X\cap V(P))\setminus\{v_h,v_{h+1}\}\ne \emptyset.$
			To the contrary, suppose $x\in (X\cap V(P))\setminus\{v_h,v_{h+1}\}$. By (\ref{a7}), we have $x^+,x^-\in N_P(v_1)\cap N_P(v_m)$.
			If $xv_h\in E(G)$, by (\ref{a7}), then $v_1Pxv_hPv_mx^+Pv_{h-1}v_1$ when $x\in V(v_1Pv_h)$ or $v_1Pv_{h-1}v_mPx^+x\allowbreak v_hv_{h+1}Px^-v_1$ when $x\in V(v_{h+1}Pv_m)$ is an $m$-cycle.
			If $xv_{h+1}\in E(G)$, then $v_1Pxv_{h+1}Px^+ v_mPv_{h+2}v_1$ when $x\in V(v_1Pv_h)$ or $v_1Pv_{h+1}x x^-Pv_{h+2} v_mPx^+v_1$ when $\allowbreak x\in V(v_{h+1}Pv_m)$ is an $m$-cycle. Both situations lead to a contradiction.
			Thus, $[\{v_h,v_{h+1}\},(X\cap V(P))\setminus\{v_h,v_{h+1}\}]=\emptyset$.
			
			Now we show that $(X\cap V(P))\setminus\{v_h,v_{h+1}\}$ is an independent set.
			If  $|X\cap V(P)|=3$, we are done, so assume that $|X\cap V(P)|\ge 4.$
			Let $x, y\in X\cap V(P)\setminus \{v_h,v_{h+1}\}$ and $xy \in E(G)$.
			By (\ref{a7}), $x^+v_1, y^-v_m\in E(G)$
			and then $v_1PxyPv_my^-P x^+v_1$ is a cycle of length $m\ge k$, a contradiction.
			This proves \Cref{c25}.
		\end{proof}

		\begin{claim}\label{c26}
			Each vertex of $X$ is only connected to $A_2\subseteq A_1\cup A_2\cup A_3$ and  $A_3=\{v_m\}$.
		\end{claim}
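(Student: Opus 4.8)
The plan is to prove the two assertions of \Cref{c26}—that every vertex of $X$ attaches only to $A_2$, and that $A_3=\{v_m\}$—using the same on-path/off-path template that settled \Cref{c7'} and \Cref{c21}, the new feature being that the gap $\{v_h,v_{h+1}\}$ now sits in the interior of $v_sPv_t$ and carries the edge $v_hv_{h+1}$ guaranteed by \Cref{c25}.

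For the on-path vertices I argue as follows. By \Cref{c22} the path-neighborhood of each $A_1$-vertex satisfies $N_P[v_i]=N_P[v_1]=A_1\cup A_2$ for $1\le i\le s-1$, and since $X\cap V(P)\subseteq V(P)$ is disjoint from $A_1\cup A_2$, no vertex of $X\cap V(P)$ is adjacent to $A_1$. Likewise \Cref{c22} gives $N_P[v_j]\subseteq N_P[v_m]\cup\{v_h\}\subseteq A_2\cup A_3\cup\{v_h\}$ for $t+1\le j\le m$, so the only on-path edge an $A_3$-vertex can send into $X\cap V(P)$ is an edge to $v_h$. Combined with \Cref{c24} (no edges between $X\cap V(P)$ and $X\setminus V(P)$) and \Cref{c25} (the unique edge inside $X\cap V(P)$ is $v_hv_{h+1}$), this shows that, once a possible $v_hv_j$ edge with $v_j\in A_3$ is excluded, every vertex of $X\cap V(P)$ is connected only to $A_2$. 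That exclusion is precisely what the statement $A_3=\{v_m\}$ delivers, because $v_hv_m\notin E(G)$ by the definition of $v_h$ (as $v_h\notin N_P[v_m]$); I therefore establish $A_3=\{v_m\}$ first.

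To prove $A_3=\{v_m\}$ I suppose instead that $v_{m-1}\in A_3$. Then $v_mv_{m-1}\in E(G)$ and, since $N_P(v_m)=A_2\cup(N_P(v_m)\cap A_3)$ with $N_P(v_m)\cap V(v_sPv_t)=A_2'=A_2$, we get $d_P(v_m)=|A_2|+|N_P(v_m)\cap A_3|=\delta$. I then describe $N_P[v_j]$ for each $v_j\in A_3$ via \Cref{c22} and reroute $P$ through the interior edge $v_hv_{h+1}$ together with the common neighbours $A_2$ of $v_1$ and $v_m$ (recall $A_2=A_2'$ by (\ref{a7})). The goal of each reroute is to produce either a cycle of length at least $k$, contradicting $c(G)=k-1$, or a longest $(H,T)$-path whose endpoints have degrees $(\omega,\delta)$ or $(\omega-1,\delta+1)$, contradicting the standing assumption that $\mathcal P$ consists only of third-type paths. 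For the off-path vertices I then run the familiar argument: were some $z\in X\setminus V(P)$ connected to $A_1\cup A_2$ beyond $A_2$, two vertex-disjoint $(z,P)$-paths $P_1,P_2$ meeting $P$ at $x_1\in V(v_1Px_2)$ and $x_2$—with $x_1x_2\notin E(P)$ by maximality of $P$—would, after the case analysis on the positions of $x_1,x_2$ (mirroring \Cref{c7'} and \Cref{c21}), yield a cycle of length at least $k$ or an $(H,T)$-path longer than $P$, a contradiction.

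The main obstacle is the interaction of the interior edge $v_hv_{h+1}$ with $A_3$: unlike in the earlier claims, \Cref{c22} genuinely allows an $A_3$-vertex to be adjacent to $v_h$, so one cannot assert $[X\cap V(P),A_3]=\emptyset$ directly, and the clean connectivity statement breaks without first controlling $A_3$. The crux is thus the rerouting argument forcing $A_3=\{v_m\}$; once that holds, $v_h$ has no neighbour in $A_3$ (as $v_hv_m\notin E(G)$), every edge from $X$ into $A_1\cup A_2\cup A_3$ is pinned to $A_2$, and \Cref{c26} follows.
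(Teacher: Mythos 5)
Your reduction of the claim to the single statement $A_3=\{v_m\}$ is sound and matches the paper's logic: \Cref{c22} kills edges from $X\cap V(P)$ into $A_1$, and into $A_3$ except possibly at $v_h$; \Cref{c24} and \Cref{c25} kill edges inside $X$; and once $A_3=\{v_m\}$ the possible edge from $v_h$ into $A_3$ disappears because $v_h\notin N_P[v_m]$. The off-path argument you sketch is also essentially the paper's. But the crux --- proving $A_3=\{v_m\}$ --- is precisely the step you do not prove. You announce a rerouting of $P$ ``through the interior edge $v_hv_{h+1}$ together with the common neighbours $A_2$'' whose \emph{goal} is a cycle of length at least $k$ or a path of type (1) or (2), but no reroute is constructed and no contradiction is verified. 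Note also that $v_h$ and $v_{h+1}$ are consecutive on $P$, so $v_hv_{h+1}$ is already an edge of $P$ and rerouting ``through'' it gains nothing.

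Worse, the plan is aimed at the wrong mechanism: the obstruction to $|A_3|\ge 2$ here is degree-theoretic, not cycle-structural. The paper's proof is a two-line count. Since $|A_2|\ge 3$, there is a gap vertex $x\in (X\cap V(P))\setminus\{v_h,v_{h+1}\}$; by \Cref{c22}, \Cref{c24} and \Cref{c25}, $N_G(x)=N_P(x)\subseteq A_2$, hence $\delta\le d_G(x)\le |A_2|$; on the other hand $A_2\cup A_3=N_P[v_m]$ (by (\ref{a6}) and $A_2'=A_2$), so $|A_2|+|A_3|=\delta+1$, forcing $|A_3|=1$, i.e.\ $A_3=\{v_m\}$. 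You actually record the seed of this count ($d_P(v_m)=|A_2|+|A_3|-1=\delta$) but never combine it with the degree of a gap vertex, which is the one place the hypothesis $\delta(G)=\delta$ enters. Without that, the configuration with $|A_3|=2$ is structurally consistent with $c(G)=k-1$: in the third bullet of Case C the paper meets exactly such a configuration ($(|A_1|,|A_2|,|A_3|)=(\omega-\delta,\delta,2)$, namely $G_1$ with one edge deleted) and can eliminate it only via edge-maximality, not by exhibiting a longer cycle or a type-(1)/(2) path. So the contradictions your reroutes are supposed to produce do not exist in general, and the claim genuinely requires the degree count.
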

		
		\begin{proof}
			We first prove that each vertex of $X\cap V(P)$ is only connected to $A_2\subseteq A_1\cup A_2\cup A_3$. By \Cref{c22} and $N_P[v_1]\subseteq A_1\cup A_2,$ $[A_1,X\cap V(P)]=\emptyset.$ Clearly, $(X\cap V(P))\setminus\{v_h,v_{h+1}\}\ne\emptyset$. Suppose $x\in (X\cap V(P))\setminus\{v_h,v_{h+1}\}$. By \Cref{c22}, $[\{x\},A_3]=\emptyset$, and then $[A_3,(X\cap V(P))\setminus\{v_h,v_{h+1}\}]=\emptyset.$ By the choice of $P$ and \Cref{c24}, $N_G(x)=N_P(x)\subseteq A_2$. Since $d_P(x)\ge \delta$ and $|A_2\cup A_3|=\delta+1$, we have $|A_2|= \delta$ and $|A_3|=1$. Hence, $A_3=\{v_m\}$. By $v_h,v_{h+1}\notin N_P(v_m)$, $[X\cap V(P),A_1\cup A_3]
			=\emptyset$. Thus, each vertex of $X\cap V(P)$ is only connected to $A_2\subseteq A_1\cup A_2\cup A_3$.
			
			Next we prove that each vertex of $X\setminus V(P)$ is only connected to $A_2\subseteq A_1\cup A_2\cup A_3$. Clearly, $[X\setminus V(P),A_3]=\emptyset$.
			Suppose that $x \in X\setminus V(P)$ is connected to $A_1\cup A_2\subseteq A_1\cup A_2\cup A_3$. Since $G$ is 2-connected, there are two vertex-disjoint $(x,P)$-paths $P_1$ and $P_2$ with $V(P_i)\cap V(P)=\{x_i\}$  for $i=1,2$.  Let $x_1\in V(v_1Px_2)$.
			Suppose $x_1\in A_1$. Without loss of generality, let $x_1=v_1$. Other cases are similar. By the choice of $P$ and $N_P[v_1]=K_\omega$, $x_2\notin A_1\cup \{v_s\}$.
			If $x_2\in A_2\setminus \{v_s,v_{h+2}\}$, then $v_1P_1xP_2x_2Pv_mx_2^{-2}Pv_1$ is a cycle of length at least $k$. If $x_2=v_{h+2}$, then $v_1P_1xP_2v_{h+2}Pv_sv_mPx_2^{+2}v_{s-1}Pv_1$ is a cycle of length at least $k$ when $t\ne h+2$, or $v_{s-1}Pv_1P_1xP_2v_tPv_sv_mPv_{t+1}$ is an $(H,T)$-path longer than $P$ when $t=h+2$, a contradiction. Hence, $x_1\in A_2$, similarly, $x_2\in A_2$.  Thus, each vertex of $X\setminus V(P)$ is only connected to $A_2\subseteq A_1\cup A_2\cup A_3$.
			This proves \Cref{c26}.
		\end{proof}
		
		By \Cref{c24}, \Cref{c25} and \Cref{c26}, $A_3=\{v_m\}$ and it is easy to verify that $X\setminus V(P)$ consists of some $K_2$ or some $K_1.$ If $G[X]$ contains exactly one edge $v_hv_{h+1}$, since
		$\omega(G)=\omega$ and $\delta(G)=\delta$, we have $\omega>\delta.$ Then $G=G_1$.
		Assume that $G[X]$ contains another edge $y_1y_2\ne v_hv_{h+1}$.
		By the maximality of $G$ and $c(G)=k-1$, $N_G[y_1]=N_G[y_2]=\{y_1,y_2,v_{h-1},v_{h+2}\}.$ Now $\delta(G)=3$ and $\omega(G)=\omega\ge 4$. Then
		$G=H_4(n,\omega,3)$, where $l_1\ge 1$ and $l_2\ge 2.$

		\vskip 3mm
		\item[$\bullet$]~$v_1v_h\notin E(G)$ and $v_{h+1}v_m\in E(G)$.
		\vskip 3mm
		
		Let $A_1=V(v_1Pv_{s-1})$, let $A_2=N_P(v_1)\cap V(v_sPv_t)\cup\{v_{h+1}\}$, let $A_2'=N_P(v_m)\cap V(v_sPv_t)\cup\{v_{h+1}\}$, let $A_3=V(v_{t+1}Pv_m)$ and let $X=V(G)\setminus (A_1\cup A_2\cup A_3).$
		By (\ref{a4}), (\ref{a5}), \Cref{c22} and \Cref{c23}, $v_{h-1}\in A_2$ and $v_{h+2}\in X\cap V(P)$.
		Similarly, we have
		\begin{align}
			A_2=A_2'=\{v_s,\dots,v_{h-3}, v_{h-1}, v_{h+1},v_{h+3},\dots,v_t\}.\nonumber
		\end{align}
		Clearly, $|A_2|\ge 3$ and $m=k.$ Note that $A_2\setminus\{v_{h+1}\}\subseteq N_P(v_1)$ and $A_2\subseteq N_P(v_m)$.

		\begin{claim}\label{c27}
			$[X\cap V(P), X\setminus V(P)]=\emptyset$ and each vertex of $X$ is only connected to $A_2\subseteq A_1\cup A_2\cup A_3$. Moreover, $X$ is an independent set.
		\end{claim}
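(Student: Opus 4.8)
The plan is to prove the three assertions of \Cref{c27} using the same machinery as \Cref{c24}--\Cref{c26} and the detour argument of \Cref{c7'}, in the order: first $[X\cap V(P),X\setminus V(P)]=\emptyset$, and then the independence of $X$ together with the ``connected only to $A_2$'' statement, which in this subcase will come out once $A_3$ has been squeezed down to $\{v_m\}$. The organising observation is that, since the defining edge $v_{h+1}v_m$ of this subcase puts $v_{h+1}$ into $A_2$, the set $A_2=A_2'=\{v_s,\dots,v_{h-1},v_{h+1},v_{h+3},\dots,v_t\}$ is regularly spaced, so every vertex of $X\cap V(P)$ lies strictly between two consecutive vertices of $A_2$ on $P$. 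Here $A_2\setminus\{v_{h+1}\}\subseteq N_P(v_1)\cap N_P(v_m)$, whereas $v_{h+1}\in N_P(v_m)\setminus N_P(v_1)$ and $v_hv_m\notin E(G)$; thus the whole difficulty is concentrated on the ``impure'' triple $v_h,v_{h+1},v_{h+2}$, the two gap vertices $v_h,v_{h+2}$ flanking $v_{h+1}$.

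For the first assertion I would argue vertex by vertex. For $v_f\in X\cap V(P)$ its predecessor $v_{f-1}$ lies in $A_2\subseteq N_P(v_m)$ --- for $f=h+2$ this predecessor edge is exactly $v_{h+1}v_m$ --- so $v_1Pv_{f-1}v_mPv_f$ is an $(H,T)$-path of order $m$ with endpoint $v_f$. Were $v_f$ adjacent to some $z\in X\setminus V(P)$, appending $z$ would yield an $(H,T)$-path of order $m+1$, contradicting the maximality of $P$; hence $[X\cap V(P),X\setminus V(P)]=\emptyset$.

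For the connection structure I would first note $[A_1,X]=\emptyset$: reversing $P$ at $v_i\in A_1$ to $v_iPv_1v_{i+1}Pv_m$ and invoking maximality gives $N_G(v_i)=N_P(v_1)\subseteq A_1\cup A_2$. By \Cref{c22} every vertex of $A_3$ has its path-neighbourhood inside $N_P[v_m]\cup\{v_h\}$, so the only conceivable edge from $A_3$ into $X\cap V(P)$ is to $v_h$; to eliminate it I would force $A_3=\{v_m\}$. Taking a gap vertex $v_f\in(X\cap V(P))\setminus\{v_h\}$ whose two $P$-neighbours are clean $A_2$-vertices, the three previous facts plus \Cref{c7'}-style independence of $X\cap V(P)$ give $N_G(v_f)\subseteq A_2$, so $\delta\le d_G(v_f)\le|A_2|$; since $|A_2\cup A_3|=\delta+1$ and $A_3\ne\emptyset$, this forces $A_3=\{v_m\}$ and then $N_G(v_m)=A_2$. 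As neither $A_1$ nor $\{v_m\}$ has a neighbour in $X$, every path from a vertex of $X$ into $A_1\cup A_2\cup A_3$ first reaches $A_2$, and in particular $v_hv_m\notin E(G)$ settles the exceptional vertex $v_h$. Independence of $X$ then follows by the detour argument: an edge inside $X$ is absorbed into a Hamiltonian $(v_a,v_b)$-path of $V(P)$ whose closure has order $m=k>c(G)$; e.g.\ a chord $v_hv_{h+2}$ produces the cycle $v_1Pv_{h-1}\,v_m\,v_{h+1}\,v_h\,v_{h+2}Pv_t\,v_1$ (using $v_{h-1}v_m$, $v_{h+1}v_m$ and $v_tv_1$), a contradiction.

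The main obstacle, and the point where this subcase genuinely differs from the symmetric first bullet, is the bottleneck vertex $v_{h+1}$: because $v_1v_{h+1}\notin E(G)$ while $v_{h+1}v_m\in E(G)$, in every cycle construction $v_{h+1}$ can only be threaded through $v_m$, so one must simultaneously use the edge $v_{h+1}v_m$ to rescue $v_{h+1}$ and sweep the tail $A_3$ forward (as in \Cref{c25}) to cover $\{v_{t+1},\dots,v_{m-1}\}$ with the single hub $v_m$. Unlike before, \Cref{c22} really does permit an $A_3$-vertex to attach to $v_h$, so ``$v_h$ is connected only to $A_2$'' is not automatic and hinges entirely on first forcing $A_3=\{v_m\}$; the delicate part is to carry out the degree count at a clean gap vertex (locating one when $|A_2|\ge 3$, exploiting that $v_{h+3}=v_t$ is clean in the tight case) \emph{before}, not after, invoking the bound, so as to avoid circularity between the independence of $X$ and the structure of $A_3$.
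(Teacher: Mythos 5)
Your overall architecture is sound and, at the crux, more careful than the paper's own write-up: you correctly isolate the one delicate point of this bullet, namely that \Cref{c22} still permits edges from $A_3$ to $v_h$, so that ``each vertex of $X$ is only connected to $A_2$'' genuinely requires forcing $A_3=\{v_m\}$ first. (The paper gets this for free by asserting $N_G(v_m)=N_P(v_m)\subseteq A_2$, an assertion equivalent to $A_3=\{v_m\}$, which it only derives \emph{after} the claim.) However, your mechanism for forcing $A_3=\{v_m\}$ has a genuine gap: it needs a gap vertex of $X\cap V(P)$ both of whose $P$-neighbours lie in $A_2\setminus\{v_{h+1}\}$, and such a vertex need not exist. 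When $|A_2|=3$ (which is allowed, since only $|A_2|\ge 3$ is guaranteed), we have $A_2=\{v_{h-1},v_{h+1},v_{h+3}\}$ with $s=h-1$ and $t=h+3$, so the only gap vertices are $v_h$ and $v_{h+2}$, and each of them has $v_{h+1}$ as a $P$-neighbour. Your parenthetical fix --- ``exploiting that $v_{h+3}=v_t$ is clean in the tight case'' --- does not parse: $v_t$ lies in $A_2$, not in $X\cap V(P)$, and a degree count at an $A_2$-vertex gives no upper bound on its neighbourhood, hence no lower bound on $|A_2|$. In the tight case the most your argument yields is $N_G(v_{h+2})\subseteq A_2\cup\{v_h\}$, i.e. $\delta\le |A_2|+1=4$ and $|A_3|=\delta-2\le 2$, which is strictly weaker than $A_3=\{v_m\}$.

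The missing case can be closed, but it needs a step you do not supply. If $|A_2|=3$ and $|A_3|\ge 3$, then $\delta=|A_2|+|A_3|-1\ge 5$ while $d_G(v_{h+2})\le |A_2\cup\{v_h\}|=4$, a contradiction. If $|A_2|=3$ and $|A_3|=2$, then $\delta=4$, and the containment $N_G(v_{h+2})\subseteq A_2\cup\{v_h\}$ together with $d_G(v_{h+2})\ge\delta$ forces $N_G(v_{h+2})=\{v_{h-1},v_h,v_{h+1},v_{h+3}\}$; but then $v_1Pv_{h-1}v_{h+2}v_hv_{h+1}v_mv_{t+1}v_tv_1$ is a cycle of length $h+5=m=k$ (it uses the forced chords $v_{h-1}v_{h+2}$ and $v_{h+2}v_h$, the defining edge $v_{h+1}v_m$, and the edges $v_mv_{t+1}$, $v_{t+1}v_t$, $v_tv_1$), contradicting $c(G)=k-1$. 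With this supplement, your ordering (clean-vertex degree count when $|A_2|\ge 4$, the above when $|A_2|=3$, then $A_3=\{v_m\}$, then the chord $v_hv_{h+2}$ and the independence of $X$) does give a complete proof of \Cref{c27}, and one that repairs the circularity you rightly flagged in the step the paper glosses over; as written, though, the proposal is incomplete exactly at this boundary case.
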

		
		\begin{proof}
			We first show that $[X\cap V(P), X\setminus V(P)]=\emptyset$. For any vertex $v_f\in X\cap V(P),$ we have $v_{f-1}\in N_P(v_m)$. Consider the path $v_1Pv_{f-1}v_mPv_f\in \mathcal{P}.$ Then $[\{v_f\},X\setminus V(P)]=\emptyset,$ that is, $[X\cap V(P), X\setminus V(P)]=\emptyset$.
			
			Next, we show that each vertex of $X$ is only connected to $A_2\subseteq A_1\cup A_2\cup A_3$.
			Consider the path $v_1Pv_{j-1}v_mPv_j\in \mathcal{P}$ for $t+1\le j\le m.$ Then $[X\setminus V(P),A_3]=\emptyset.$ Suppose that $x\in X\setminus V(P)$ is connected to $A_1\cup A_2\subseteq A_1\cup A_2\cup A_3.$ Since $G$ is 2-connected, there are two vertex-disjoint $(x,P)$-paths $P_1$ and $P_2$ with $V(P_i)\cap V(P)=\{x_i\}$ and $x \in X\setminus V(P)$ for $i=1,2$. Let $x_1\in V(v_1Px_2)$. Suppose $x_1\in A_1$. By the choice of $P$, $x_2\notin A_1.$ If $x_2\in A_2,$ then there is a cycle of length at least $k$, a contradiction. Hence, $x_1\in A_2$, similarly, $x_2\in A_2$.
			Thus, each vertex of $X\setminus V(P)$ is only connected to $A_2\subseteq A_1\cup A_2\cup A_3.$
			
			Now, we prove that each vertex of $X\cap V(P)$ is only connected to $A_2\subseteq A_1\cup A_2\cup A_3.$
			Since $N_G(v_m)=N_P(v_m)\subseteq A_2$, we have $[X\cap V(P),A_3]=\emptyset$. By \Cref{c22} and $N_P[v_1]\subseteq A_1\cup A_2,$  $[X\cap V(P),A_1]=\emptyset$, that is, each vertex of $X\cap V(P)$ is only connected to $A_2\subseteq A_1\cup A_2\cup A_3.$ Thus, each vertex of $X$ is only connected to $A_2\subseteq A_1\cup A_2\cup A_3$.
			
			It is easy to verify that for any pair vertices $y_1,y_2\in X\cap V(P)$, there is a $(y_1,y_2)$-path of order $m.$ Then $X\cap V(P)$ is an independent set.
			For any two vertices $y_1,y_2\in A_2$, there exits a $(y_1,y_2)$-path of order $k-2$. Note that $[X\cap V(P), X\setminus V(P)]=\emptyset$ and each vertex of $X\setminus V(P)$ is only connected to $A_2\subseteq A_1\cup A_2\cup A_3.$ Since $G$ is 2-connected, $X\setminus V(P)$ is an independent set.
			By $[X\cap V(P), X\setminus V(P)]=\emptyset$, $X$ is an independent set. This proves \Cref{c27}.
		\end{proof}
		
		By $\delta(G)= \delta$, \Cref{c27} and $|A_2\cup A_3|=\delta+1$, we have  $|A_1|=\omega-\delta+1$, $|A_2|=\delta$, $|X|=n-\omega-2$ and $|A_3|=1$.
		Now $G$ is obtained from $H_1(n,\omega+1,\delta)$ by deleting all edges between $K_{\omega-\delta+1}$ and a common vertex of $K_\delta$, contradicting to the maximality of $G$.
		
		\vskip 3mm
		\item[$\bullet$]~$v_1v_h\in E(G)$ and $v_{h+1}v_m\notin E(G)$.
		\vskip 3mm
		
		Let $A_1=V(v_1Pv_{s-1})$, let $A_2=N_P(v_1)\cap V(v_sPv_t)\cup\{v_h\}$,
		let $A_2'=N_P(v_m)\cap V(v_sPv_t)\cup\{v_h\}$, let $A_3=V(v_{t+1}Pv_m)$ and let $X=V(G)\setminus (A_1\cup A_2\cup A_3).$ Clearly, $X\ne\emptyset.$
		By \Cref{c23} and $c(G)=k-1$, $v_{h-1}\notin N_P(v_1)\cup N_P(v_m).$
		By (\ref{a4}),(\ref{a5}) and \Cref{c23}, $v_{h-2},v_{h+2}\in A_2$.
		Similarly, we have
		\begin{align}
			A_2=A_2'=\{v_s,\dots,v_{h-2},v_h,v_{h+2},\dots,v_t\}.\nonumber
		\end{align}
		Clearly, $|A_2|\ge 3$ and $m=k.$ Note that $A_2\subseteq N_P(v_1)$ and $A_2\setminus\{v_{h}\}\subseteq N_P(v_m)$.
		
		\begin{claim}\label{c28}
			$[X\cap V(P), X\setminus V(P)]=\emptyset$ and each vertex of $X$ is only connected to $A_2\subseteq A_1\cup A_2\cup A_3$. Moreover, $X$ is an independent set.
		\end{claim}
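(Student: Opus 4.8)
The plan is to prove \Cref{c28} by following the three-part strategy used for \Cref{c27}, adapting it to the present adjacency pattern. Recall that here $A_2=A_2'=\{v_s,\dots,v_{h-2},v_h,v_{h+2},\dots,v_t\}$ with $A_2\subseteq N_P(v_1)$ and $A_2\setminus\{v_h\}\subseteq N_P(v_m)$, that $|A_2|\ge 3$, $m=k$, and $G[A_1\cup A_2]=K_\omega$. As in \Cref{c27}, I will first show $[X\cap V(P),X\setminus V(P)]=\emptyset$, then that every vertex of $X$ is only connected to $A_2$, and finally that $X$ is independent.

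For the first part, the idea is again to exhibit, for each $v_f\in X\cap V(P)$, a longest $(H,T)$-path having $v_f$ as its $T$-endpoint; the maximality of $P$ then forces $[\{v_f\},X\setminus V(P)]=\emptyset$. For every skipped vertex $v_f$ other than $v_{h+1}$ the predecessor $v_{f-1}$ lies in $A_2\setminus\{v_h\}\subseteq N_P(v_m)$, so the rotated path $v_1Pv_{f-1}v_mPv_f$ is such a path and the argument of \Cref{c27} goes through verbatim. The one exception, and the main obstacle, is $v_{h+1}$: its predecessor is $v_h$, and $v_hv_m\notin E(G)$, so the predecessor rotation is unavailable. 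Here I would instead use the edges $v_1v_h\in E(G)$ and $v_1v_{h+2}\in E(G)$ (as $v_{h+2}\in A_2\subseteq N_P(v_1)$) together with the fact that $A_1\cup A_2$ is a clique: rerouting the initial segment through $v_1v_h$ and hanging the tail $A_3$ off $A_2$ should produce a Hamiltonian path of $V(P)$ whose unique $T$-endpoint is $v_{h+1}$, and again maximality of $P$ finishes it. If a clean such rotation turns out to be awkward, the alternative is a direct argument: assuming $zv_{h+1}\in E(G)$ for some $z\in X\setminus V(P)$ and using $2$-connectivity to obtain a second $(z,P)$-path, one builds a cycle of length at least $k$, contradicting $c(G)=k-1$.

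Once $[X\cap V(P),X\setminus V(P)]=\emptyset$ is in hand, the second part proceeds exactly as in \Cref{c27}: the containment $N_P[v_1]=K_\omega\subseteq A_1\cup A_2$ gives $[A_1,X\cap V(P)]=\emptyset$, while $N_G(v_m)\subseteq A_2\cup A_3$ together with \Cref{c22} gives $[A_3,X\cap V(P)]=\emptyset$, so each vertex of $X\cap V(P)$ is only connected to $A_2$; for $x\in X\setminus V(P)$ I take two vertex-disjoint $(x,P)$-paths meeting $P$ at $x_1\in V(v_1Px_2)$ and $x_2$, rule out $x_1\in A_1$ (which would yield either a cycle of length at least $k$ or an $(H,T)$-path longer than $P$), and conclude $x_1,x_2\in A_2$. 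Finally, since $|A_2|\ge 3$ and $m=k$, the independence of $X$ follows as in \Cref{c7'}: any edge inside $X$ would, using that each skipped vertex has both of its $P$-neighbors in $A_2$, create a $(y_1,y_2)$-path of order $m$ and hence a cycle of length at least $k$. The crux of the whole claim is therefore the treatment of $v_{h+1}$ in the first part; the remaining steps are routine adaptations of \Cref{c27} and \Cref{c7'}.
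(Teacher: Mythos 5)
Your proposal is correct, and on the one step that carries all the weight it takes a genuinely different route from the paper. The skeleton coincides: the predecessor rotations $v_1Pv_{f-1}v_mPv_f$ for skipped vertices $v_f\ne v_{h+1}$, the confinement of $X$ to $A_2$ via \Cref{c22} plus two vertex-disjoint $(x,P)$-paths, and the \Cref{c27}/\Cref{c7'}-style independence argument are exactly what the paper does. The difference is $[\{v_{h+1}\},X\setminus V(P)]=\emptyset$: the paper never builds a longest path ending at $v_{h+1}$; it instead supposes $v_g\in N(v_{h+1})\cap(X\setminus V(P))$ exists, proves $[\{v_g\},A_1\cup A_3\cup X]=\emptyset$ through several ad hoc cycles and paths, and finishes with a case analysis on the vertex $v_r\in N(v_g)\cap A_2$ (a $k$-cycle, a longer $(H,T)$-path, or a path $P'$ with $d_{P'}(v_h)\ge\omega$ contradicting type (3)). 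Your direct rotation replaces all of this, and it does exist: $P'=v_1Pv_{s-1}v_{h+2}Pv_mv_sPv_hv_{h+1}$ spans $V(P)$, needs only the clique edge $v_{s-1}v_{h+2}$ (recall $N_P[v_1]=A_1\cup A_2=V(H)$), the edge $v_sv_m$ from the definition of $s$, and the $P$-edge $v_hv_{h+1}$; its ends $v_1,v_{h+1}$ are nonadjacent, so $P'\in\mathcal P$, and appending any $z\in N(v_{h+1})\cap(X\setminus V(P))$ gives an $(H,T)$-path of order $m+1$, a contradiction whether or not $v_1z\in E(G)$ (maximality in $\mathcal P$, or a cycle of length $m+1\ge k+1$). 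Your variant through $v_1v_h$, namely $v_{s-1}Pv_1v_hPv_sv_mPv_{h+2}v_{h+1}$, works equally well, its ends being nonadjacent by \Cref{c22}. Note that this shortcut is available precisely because the present bullet assumes $v_1v_h\in E(G)$; in the sibling bullets (\Cref{c26}, \Cref{c27}) no analogous rotation exists, which is presumably why the authors kept the uniform indirect argument here, at the cost of two extra paragraphs. Two points to tighten in a write-up: first, your fallback for $v_{h+1}$ (``build a cycle from a second $(z,P)$-path'') is vague and is essentially the paper's harder route, so commit to the rotation rather than the fallback; second, in your second part the subcase $x_1\in A_1$, $x_2=v_{h+2}$ is not handled by the generic cycle used elsewhere --- the paper switches there to the degree contradiction $d_{P'}(v_h)\ge\omega$ --- though your stated dichotomy can be saved by closing a cycle of length at least $k$ through the edge $v_hv_1$ and a Hamiltonian path of the clique $A_1$ from $v_1$ to $x_1$, so that case deserves to be spelled out.
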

		
		\begin{proof}
			We first show that $[X\cap V(P), X\setminus V(P)]=\emptyset$. For any vertex $v_f\in (X\cap V(P))\setminus\{v_{h+1}\},$ we have $v_{f-1}\in A_2\setminus\{v_h\}\subseteq N_P(v_m).$  Consider the path $v_1Pv_{f-1}v_mPv_f\in \mathcal{P}.$ By the choice of $P$, $[\{v_f\},X\setminus V(P)]=\emptyset,$ that is, $[(X\cap V(P))\setminus \{v_{h+1}\}, X\setminus V(P)]=\emptyset$.
			Next, we show that $[\{v_{h+1}\},X\setminus V(P)]=\emptyset$.
			
			Suppose there is a vertex $v_g\in N(v_{h+1})\cap (X\setminus V(P)).$  We show that $[\{v_g\},A_1\cup A_3\cup X]=\emptyset$.
			By (\ref{a6}) and the choice of $P$, $[\{v_g\},A_3]=\emptyset$.
			Suppose $[\{v_g\}, A_1]\neq \emptyset.$
			Without loss of generality, let $v_1\in N(v_g)\cap A_1$.
			Then $v_1v_gv_{h+1}\allowbreak Pv_mv_{h}Pv_1$ 
			is a $(k+1)$-cycle, a contradiction. Other cases are similar.
			Thus, $[\{v_g\},A_1]=\emptyset$. Consider the path $v_1Pv_{h-2}v_mPv_{h+2}v_hv_{h+1}v_g\in \mathcal{P}.$ Then $[\{v_{g}\}, X\setminus V(P)]=\emptyset$.
			We assert that $[\{v_g\},X\cap V(P)]=\emptyset$. Suppose $v_lv_g\in E(G)$ with $v_l \in X\cap V(P)$.
			If $s+1\le l\le h-1,$ then $v_hPv_{l+1}v_1Pv_{l}v_gv_{h+1}Pv_m$ is a longer $(H,T)$-path  than $P$, a contradiction. If $h+2=t$, we are done, so assume that $h+2<t$. By the definition of $A_2$, we have $h+3\le t-1$. If $h+3\le l\le t-1,$ then $v_1Pv_{h+1}v_gv_lPv_mv_{l-1}Pv_{h+2}v_1$ is a $(k+1)$-cycle, a contradiction.
			Hence, $[\{v_g\},A_1\cup A_3\cup X]=\emptyset$.
			
			By $d_G(v_g)\ge \delta\ge 2$, there is a vertex $v_r\in N(v_g)\cap A_2$. If $r\notin \{h-2,h,h+2\}$, then there exists a $k$-cycle. If $r\in \{h,h+2\}$, then  there exists an $(H,T)$-path longer than $P.$ If $r=h-2,$ then there is a path $P'=v_hv_1Pv_rv_gv_{h+1}Pv_m\in \mathcal{P}$ with $d_{P'}(v_h)\ge \omega.$ Both situations lead to a contradiction. Thus, $[\{v_{h+1}\},X\setminus V(P)]=\emptyset$ and then $[X\cap V(P), X\setminus V(P)]=\emptyset$.
			
			We simply prove that each vertex of $X$ is only connected to $A_2\subseteq A_1\cup A_2\cup A_3$ and $X$ is an independent set. By \Cref{c22}, $N_G[A_3]\subseteq A_2\cup A_3$ and $N_P[A_1]\subseteq A_1\cup A_2$. Then $[X,A_3]=\emptyset$ and $[X\cap V(P),A_1]=\emptyset$. Since $G$ is 2-connected, there are two vertex-disjoint $(x,P)$-paths $P_1$ and $P_2$ with $V(P_i)\cap V(P)=\{x_i\}$ and $x \in X\setminus V(P)$ for $i=1,2$. Assume that $x_1\in V(v_1Px_2)$. If $x_1,x_2\in A_1$, there is an $(H,T)$-path longer than $P.$ If $x_1\in A_1$ and $x_2\in A_2\setminus\{v_{h+2}\}$, there is an $m$-cycle. If $x_1\in A_1$ and $x_2=v_{h+2}$, there is an $(H,T)$-path $P'=v_hPx_1^+v_1Px_1P_1xP_2v_{h+2}Pv_m$ with $d_{P'}(v_h)\ge \omega.$ Both situations lead to a contradiction. Thus, each vertex of $X$ is only connected to $A_2\subseteq A_1\cup A_2\cup A_3$. Similar to the proof of \Cref{c27}, $X$ is an independent set. This proves \Cref{c28}.
		\end{proof}

		Since $G[A_1\cup A_2]=K_\omega$, $|A_2\cup A_3|-1=|N_P[v_m]|=\delta+1$ and $\delta(G)= \delta$, we have $(|A_1|,|A_2|,|A_3|)= (\omega-\delta-1,\delta+1,1)$ or $(\omega-\delta,\delta,2)$.
		
		If $(|A_1|,|A_2|,|A_3|)=(\omega-\delta-1,\delta+1,1)$, then $G$ is obtained from $H_1(n,\omega,\delta+1)$ by deleting one edge between $K_{\delta+1}$ and $\overline{K}_{n-\omega}.$ This contradicts the assumption that $G$ is an edge-maximal graph since $c(H_1(n,\omega,\delta+1))=k-1$.
		
		If $(|A_1|,|A_2|,|A_3|)=(\omega-\delta,\delta,2)$, then $G$ is obtained from $G_1$ by deleting one edge between $K_{\delta}$ and $K_2.$ This contradicts the assumption that $G$ is an edge-maximal graph since $c(G_1)=k-1$.
		\vskip 3mm
		
		Hence, we find all edge-maximal graphs with circumference $\omega+\delta$ in the case of $d_P(v_1)=\omega-1$ and $d_P(v_m)=\delta$. By \Cref{l2.3}, the edge-maximal graph $G\in \{H_3(n,\omega,2),H_4(n,\omega,3)\}\cup\mathcal F.$
		Suppose $F$ is a graph satisfying the conditions of {\blue Theorem \ref{thm1.3}}. According to the previous proof, it follows that $c(F)\ge \min\{n,\omega+\delta+1\}$,  unless $F\in\{H(n,\omega,\delta),Z(n,\omega,\delta)\}$ when $c(F)=\omega+\delta-1$ or $F\in \{H_3(n,\omega,2),H_4(n,\omega,3)\}\cup \mathcal{G}$ when $c(F)=\omega+\delta$.

		This completes the proof of {\blue Theorem \ref{thm1.3}}.
	\end{proof}
	
	\section*{\normalsize Acknowledgement}  The authors are grateful to Professor Xingzhi Zhan for his constant support and guidance. This research  was supported by the NSFC grant 12271170.
	
	\section*{\normalsize Declaration}
	
	\noindent\textbf{Conflict~of~interest}
	The authors declare that they have no known competing financial interests or personal relationships that could have appeared to influence the work reported in this paper.
	
	\noindent\textbf{Data~availability}
	No data was used for the research described in the article.


\begin{thebibliography}{99}
		
		\bibitem{BM} J.A. Bondy and U.S.R. Murty, Graph Theory, Graduate Texts in Mathematics, Vol. 244, 3rdEd., Springer, London (2011).
		
		\bibitem{B} J.A. Bondy, Large cycles in graphs, Discrete Math., 1(2)(1971/1972) 121--132.
		
		\bibitem{E} P. Erd\H os, M. Simonovits and V. S\'{o}s, Anti-Ramsey theorems, Coll. Math. Soc. J. Bolyai, 10(1973) 633--642.
		
		\bibitem{EG} P. Erd\H os and T. Gallai, On maximal paths and circuits of graphs, Acta Math. Acad. Sci. Hung., 10(3)(1959) 337-–356.
		
		\bibitem{D} G. Dirac, Some theorems on abstract graphs, Proc. Lond. Math. Soc., 2(1952) 69--81.
		
		\bibitem{F} Z. F$\ddot{\text{u}}$redi, A. Kostochka and J. Verstra$\ddot{\text{e}}$te, Stability in the Erd\H os-Gallai Theorem on cycles and paths, J. Combin. Theory Ser. B, 121(2016) 197--228.
		
		\bibitem{FL} Z. F$\ddot{\text{u}}$redi, A. Kostochka, R. Luo and J. Verstra$\ddot{\text{e}}$te, Stability in the Erd\H os-Gallai Theorem on cycles and paths, II, Discrete Math., 341(2018) 1253--1263.
		
		\bibitem{Li} H. Li, Generalizations of Dirac's theorem in Hamiltonian graph theory-A survey, Discrete Math., 313(2013) 2034--2053.
		
		\bibitem{LN} B.-L. Li and B. Ning, A strengthening of Erd\H os-Gallai theorem and proof of Woodall's conjecture, J. Combin. Theory Ser. B, 146(2021) 76--95.
		
		\bibitem{MN} J. Ma and B. Ning, Stability results on the circumference of a graph, Combinatorica, 40(2020) 105--147.
		
		\bibitem{M}  J. Ma and L.-T. Yuan, A stability result of the P$\acute{\text{o}}$sa lemma, SIAM J. Discrete Math., 38(2)(2024) 1757--1783.
		
		\bibitem{BY}  B. Ning and L.-T. Yuan, Stability in Bondy's theorem on paths and cycles, J. Combin. Theory Ser. B, 175(2025) 213--239.
		
		\bibitem{O} O. Ore, On a graph theorem by Dirac, J. Combin. Theory, 2(1967) 383--392.
		
		
		\bibitem{Po} L. P$\acute{\text{o}}$sa, On the circuits of finite graphs, Magyar Tud. Akad. Mat. Kutat$\acute{\text{o}}$Int. K$\ddot{\text{o}}$zl., 8(1963) 335--361.
		
		\bibitem{S} O. Suil, D.B. West and H. Wu, Longest cycles in $k$-connected graphs with given independence number, J. Combin. Theory Ser. B, 101(6)(2011) 480--485.
		
		\bibitem{V} H.-J. Voss, Cycles and Bridges in Graphs, Kluwer Academic Publishers, VEB Deutscher Verlag der
		Wissenschaften, Dordrecht, Berlin, 1991.
		
		\bibitem{W} D.B. West, Introduction to Graph Theory, Prentice Hall, Inc., 1996.
		
		\bibitem{Y}  L.-T. Yuan, Anti-Ramsey numbers for paths, arXiv:2102.00807, 2021.
		
		\bibitem{Yuan} L.-T. Yuan, Circumference, minimum degree and clique number, Electron. J. Combin., 31(4)(2024) \#P4.65.
		
		\bibitem{Z} X.-T. Zhu, E. Gy\H ori, Z. He, Z.-Q. Lv, N. Salia and C.-Q. Xiao, Stability version of Dirac's theorem and
		its applications for generalized Tur\'{a}n problems, Bull. Lond. Math. Soc., 55(4)(2023) 1857--1873.
	\end{thebibliography}
\end{document}